\providecommand{\abs}[1]{\left\lvert#1 \right\rvert}
\newtheorem{thr}{Theorem}
\newtheorem{cor}{Corollary}
\newtheorem{lem}{Lemma}
\newenvironment{rem}
{\begin{trivlist}\item[\hskip%
\labelsep{{\it \noindent Remark}}]}{\hfill
\end{trivlist}}
\newenvironment{proof}
{\begin{trivlist}\item[\hskip%
\labelsep{\it \noindent Proof.}]}{\hfill $\square$ \rm
\end{trivlist}}
\numberwithin{equation}{section}
\begin{document}
\begin{center}
{\huge {\bf Spectral properties of anti-heptadiagonal } \\
{\bf persymmetric Hankel matrices}} \\
\vspace{1.0cm}
{\Large Jo\~{a}o Lita da Silva\footnote{\textit{E-mail address:} \texttt{jfls@fct.unl.pt}; \texttt{joao.lita@gmail.com}}} \\
\vspace{0.1cm}
\textit{Department of Mathematics and GeoBioTec
\\ Faculty of Sciences and Technology \\
NOVA University of Lisbon \\ Quinta da Torre, 2829-516 Caparica,
Portugal}
\end{center}

\bigskip

\bigskip

\bigskip

\begin{abstract}
In this paper we express the eigenvalues of anti-heptadiagonal persymmetric Hankel matrices as the zeros of explicit polynomials giving also a representation of its eigenvectors. We present also an expression depending on localizable parameters to compute its integer powers. In particular, an explicit formula not depending on any unknown parameter for the inverse of anti-heptadiagonal persymmetric Hankel matrices is provided.
\end{abstract}

\bigskip

{\small{\textit{Key words:} Anti-heptadiagonal matrix, Hankel matrix, eigenvalue, eigenvector, diagonalization}}

\bigskip

{\small{\textbf{2010 Mathematics Subject Classification:}
15A18, 15B05}}

\bigskip

\section{Introduction}\label{sec:1}

\indent

The importance of Hankel matrices in computational mathematics and engineering is well-known. As a matter of fact, these type of matrices have not only a varied and numerous relations with polynomial computations (see, for instance, \cite{Bultheel97}) but also applications in engineering problems of system and control theory (see \cite{Datta88}, \cite{Kailath80} or \cite{Olshevsky01} and the references therein). Recently, several authors have studied particular cases of these matrices in order to derive explicit expressions for its powers (see \cite{Gutierrez11}, \cite{Lita16}, \cite{Rimas13a}, \cite{Rimas13b}, \cite{Wu10}, \cite{Yin08} among others).

The aim of this paper is to explore spectral properties of general anti-heptadiagonal persymmetric Hankel matrices, namely locating its eigenvalues and getting an explicit representation of its eigenvectors. Additionally, it is our purpose announce formulae for the computation of its integer powers using, thereunto, a diagonalization for the referred matrices. Particularly, an expression free of any unknown parameter to calculate the inverse of any anti-heptadiagonal persymmetric Hankel matrix (assuming its nonsingularity) is made available. The essential ingredient in the present approach is a suitable decomposition for these sort of matrices, obtained at the expense of the so-called \emph{bordering technique} originally settled in the eighties for band symmetric Toeplitz matrices. Thereby, the statements announced here besides being a small contribution for the state of art on spectral features of anti-banded matrices, show that techniques developed for band symmetric Toeplitz matrices can be also employed in persymmetric Hankel matrices. We emphasize that, in general, the acquaintance of the eigenvalues and eigenvectors of a symmetric Toeplitz matrix does not allow to know which are exactly the eigenvalues and eigenvectors of the corresponding persymmetric Hankel matrix (obtained by multiplication of the exchange matrix), despite multiple symmetries lead to some relationship between eigenpars of these two classes of matrices.

An $n \times n$ matrix $\mathbf{B}_{n}$ is said to be \emph{banded} when all its nonzero elements are confined within a band formed by diagonals parallel to the main diagonal, i.e. $\left[\mathbf{B}_{n} \right]_{k,\ell} = 0$ when $\abs{k - \ell} > h$, and $\left[\mathbf{B}_{n} \right]_{k,k-h} \neq 0$ or $\left[\mathbf{B}_{n} \right]_{k,k+h} \neq 0$ for at least one value of $k$, where $h$ is the \emph{half-bandwidth} and $2h+1$ is the \emph{bandwidth} (see \cite{Pissanetsky84}, page $13$). An $n \times n$ matrix $\mathbf{A}_{n}$ is said to be \emph{anti-banded} if $\mathbf{J}_{n} \mathbf{A}_{n}$ is a banded matrix where $\mathbf{J}_{n}$ is the $n \times n$ exchange matrix (that is, the matrix such that $\left[\mathbf{J}_{n} \right]_{k,n-k+1} = 1$ for each $1 \leqslant k \leqslant n$ with the remaining entries zero). Particularly, an $n \times n$ matrix which has the form
\begin{equation*}
\left[
\begin{array}{ccccccccccc}
  0 & \ldots & \ldots & \ldots & \ldots & \ldots & 0 & \alpha_{n} & \beta_{n} & \gamma_{n} & d_{n} \\
  \vdots & & & & & \iddots & \alpha_{n-1} & \beta_{n-1} & \gamma_{n-1} & d_{n-1} & c_{n-1} \\
  \vdots & & & & \iddots & \iddots & \beta_{n-2} & \gamma_{n-2} & d_{n-2} & c_{n-2} & b_{n-2} \\
  \vdots & & & \iddots & \iddots & \iddots & \gamma_{n-3} & d_{n-3} & c_{n-3} & b_{n-3} & a_{n-3} \\
  \vdots & & \iddots & \iddots & \iddots & \iddots & d_{n-4} & c_{n-4} & b_{n-4} & a_{n-4} & 0 \\
  \vdots & \iddots & \iddots & \iddots & \iddots & \iddots & \iddots & \iddots & \iddots & \iddots & \vdots \\
  0 & \alpha_{5} & \beta_{5} & \gamma_{5} & d_{5} & \iddots & \iddots & \iddots & \iddots & & \vdots \\
  \alpha_{4} & \beta_{4} & \gamma_{4} & d_{4} & c_{4} & \iddots & \iddots & \iddots & & & \vdots \\
  \beta_{3} & \gamma_{3} & d_{3} & c_{3} & b_{3} & \iddots & \iddots & & & & \vdots \\
  \gamma_{2} & d_{2} & c_{2} & b_{2} & a_{2} & \iddots & & & & & \vdots \\
  d_{1} & c_{1} & b_{1} & a_{1} & 0 & \ldots & \ldots & \ldots & \ldots & \ldots & 0
\end{array}
\right]
\end{equation*}
is called \emph{anti-heptadiagonal}.

Throughout, we shall consider the following $n \times n$ anti-heptadiagonal persymmetric Hankel matrix
\begin{equation}\label{eq:1.1}
\mathbf{H}_{n} = \left[
\begin{array}{ccccccccccc}
  0 & \ldots & \ldots & \ldots & \ldots & \ldots & 0 & a & b & c & d \\
  \vdots & & & & & \iddots & a & b & c & d & c \\
  \vdots & & & & \iddots & \iddots & b & c & d & c & b \\
  \vdots & & & \iddots & \iddots & \iddots & c & d & c & b & a \\
  \vdots & & \iddots & \iddots & \iddots & \iddots & d & c & b & a & 0 \\
  \vdots & \iddots & \iddots & \iddots & \iddots & \iddots & \iddots & \iddots & \iddots & \iddots & \vdots \\
  0 & a & b & c & d & \iddots & \iddots & \iddots & \iddots & & \vdots \\
  a & b & c & d & c & \iddots & \iddots & \iddots & & & \vdots \\
  b & c & d & c & b & \iddots & \iddots & & & & \vdots \\
  c & d & c & b & a & \iddots & & & & & \vdots \\
  d & c & b & a & 0 & \ldots & \ldots & \ldots & \ldots & \ldots & 0
\end{array}
\right].
\end{equation}

\section{Auxiliary tools}\label{sec:2}

\indent

Consider the class of complex matrices defined by
\begin{equation*}
\EuScript{A}_{n} := \left\{\mathbf{A}_{n} \in \EuScript{M}_{n}(\mathbb{C}): \left[\mathbf{A}_{n} \right]_{k-1,\ell} + \left[\mathbf{A}_{n} \right]_{k+1,\ell} = \left[\mathbf{A}_{n} \right]_{k,\ell-1} + \left[\mathbf{A}_{n} \right]_{k,\ell+1}, \, k,\ell = 1,2,\ldots,n \right\}
\end{equation*}
where it is assumed $\left[\mathbf{A}_{n} \right]_{n+1,\ell} = \left[\mathbf{A}_{n} \right]_{k,n+1} = \left[\mathbf{A}_{n} \right]_{0,\ell} = \left[\mathbf{A}_{n} \right]_{k,0} = 0$ for all $k,\ell$.
We begin by retrieve a pair of results stated in \cite{Bini83} (see Propositions 2.1 and 2.2), announced here for complex matrices. The proofs do not suffer any changes from the original ones, whence we will omit the details.


\begin{lem}\label{lem1}
\noindent \newline \textnormal{(a)} The class $\EuScript{A}_{n}$ is the algebra generated over $\mathbb{C}$ by the $n \times n$ matrix
\begin{equation}\label{eq:2.0}
\boldsymbol{\Omega}_{n} = \left[
\begin{array}{ccccccc}
  0 & 1 & 0 & \ldots & \ldots & \ldots & 0 \\
  1 & 0 & 1 & \ddots & & & \vdots \\
  0 & 1 & 0 & \ddots & \ddots & & \vdots \\
  \vdots & \ddots & \ddots & \ddots & \ddots & \ddots & \vdots \\
  \vdots & & \ddots & \ddots & 0 & 1 & 0 \\
  \vdots & & & \ddots & 1 & 0 & 1 \\
  0 & \ldots & \ldots & \ldots & 0 & 1 & 0
\end{array}
\right].
\end{equation}

\medskip

\noindent \textnormal{(b)} If $\mathbf{A}_{n} \in \EuScript{A}_{n}$ and $\mathbf{a}^{\top}$ is its first row then
\begin{equation*}
\mathbf{A}_{n} = \sum_{k=0}^{n-1} \omega_{k+1} \boldsymbol{\Omega}_{n}^{k}
\end{equation*}
where $\boldsymbol{\Omega}_{n}$ is the $n \times n$ matrix \eqref{eq:2.0} and $\boldsymbol{\omega}^{\top} = (\omega_{1},\omega_{2},\ldots,\omega_{n})$ is the solution of the upper triangular system $\mathbf{T}_{n} \boldsymbol{\omega} = \mathbf{a}$, with $\left[\mathbf{T}_{n} \right]_{0,0} := 1$, $\left[\mathbf{T}_{n} \right]_{0,\ell} := 0$ for all $1 \leqslant \ell \leqslant n$,
\begin{equation*}
\left[\mathbf{T}_{n} \right]_{k,\ell} := \left\{
\begin{array}{l}
  \left[\mathbf{T}_{n} \right]_{k-1,\ell-1} + \left[\mathbf{T}_{n} \right]_{k+1,\ell-1}, \quad 1 \leqslant k \leqslant \ell \leqslant n \\
  0, \quad \textit{otherwise}
\end{array}
\right..
\end{equation*}
\end{lem}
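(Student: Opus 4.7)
My plan is to follow the classical Bini--Capovani argument and split the proof into the two parts (a) and (b), with the bulk of the work in (a).

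For part (a), I would first check directly from the defining relation $[\mathbf{A}_n]_{k-1,\ell}+[\mathbf{A}_n]_{k+1,\ell}=[\mathbf{A}_n]_{k,\ell-1}+[\mathbf{A}_n]_{k,\ell+1}$ (remembering the zero boundary convention on the indices $0$ and $n+1$) that $\boldsymbol{\Omega}_n\in\EuScript{A}_n$; this is essentially the observation that $\boldsymbol{\Omega}_n^{\top}=\boldsymbol{\Omega}_n$ together with the fact that $(\mathbf{M})_{k-1,\ell}+(\mathbf{M})_{k+1,\ell}=[\boldsymbol{\Omega}_n\mathbf{M}]_{k,\ell}$ and $(\mathbf{M})_{k,\ell-1}+(\mathbf{M})_{k,\ell+1}=[\mathbf{M}\boldsymbol{\Omega}_n]_{k,\ell}$, so belonging to $\EuScript{A}_n$ is equivalent to commuting with $\boldsymbol{\Omega}_n$. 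From this reading the class $\EuScript{A}_n$ is immediately a subalgebra of $\EuScript{M}_n(\mathbb{C})$ containing $\boldsymbol{\Omega}_n$, and hence contains $\mathbb{C}[\boldsymbol{\Omega}_n]$.

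Next I would pin down the dimension. The recurrence, solved for $[\mathbf{A}_n]_{k+1,\ell}$, shows that each row of an element of $\EuScript{A}_n$ is determined by the two previous rows together with the ``row~$0$'' convention; in particular the first row determines the whole matrix, so $\dim\EuScript{A}_n\leqslant n$. On the other hand $\boldsymbol{\Omega}_n$ is a tridiagonal symmetric matrix whose spectrum is the well-known set $\{2\cos(j\pi/(n+1)):1\leqslant j\leqslant n\}$ of $n$ pairwise distinct real numbers, so its minimal polynomial has degree $n$ and $\mathbf{I},\boldsymbol{\Omega}_n,\ldots,\boldsymbol{\Omega}_n^{n-1}$ are linearly independent. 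Combining the two inequalities yields $\EuScript{A}_n=\mathbb{C}[\boldsymbol{\Omega}_n]=\operatorname{span}\{\boldsymbol{\Omega}_n^{k}:0\leqslant k\leqslant n-1\}$, which is part~(a).

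For part~(b) the idea is to identify the columns of $\mathbf{T}_n$ with the first rows of the consecutive powers of $\boldsymbol{\Omega}_n$. Setting $\tau_{k,\ell}:=[\boldsymbol{\Omega}_n^{\ell-1}]_{1,k}$ and expanding $\boldsymbol{\Omega}_n^{\ell-1}=\boldsymbol{\Omega}_n^{\ell-2}\boldsymbol{\Omega}_n$ gives
\begin{equation*}
\tau_{k,\ell}=[\boldsymbol{\Omega}_n^{\ell-2}]_{1,k-1}+[\boldsymbol{\Omega}_n^{\ell-2}]_{1,k+1}=\tau_{k-1,\ell-1}+\tau_{k+1,\ell-1},
\end{equation*}
which is precisely the recurrence defining $\mathbf{T}_n$; a direct check of the initial data $\tau_{k,1}=\delta_{1,k}$ against $[\mathbf{T}_n]_{k,1}=[\mathbf{T}_n]_{k-1,0}+[\mathbf{T}_n]_{k+1,0}=\delta_{1,k}$ (using $[\mathbf{T}_n]_{0,0}=1$ and the ``otherwise $=0$'' clause) shows $[\mathbf{T}_n]_{k,\ell}=\tau_{k,\ell}$ throughout. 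Since any first-row-to-column distance greater than $\ell-1$ cannot be reached in $\ell-1$ steps along the path graph, $\tau_{k,\ell}=0$ for $k>\ell$, and the unique length-$(\ell-1)$ path $1\to 2\to\cdots\to\ell$ gives $\tau_{\ell,\ell}=1$, so $\mathbf{T}_n$ is upper triangular with unit diagonal and the system $\mathbf{T}_n\boldsymbol{\omega}=\mathbf{a}$ is uniquely solvable. Writing $\mathbf{A}_n=\sum_{k=0}^{n-1}\omega_{k+1}\boldsymbol{\Omega}_n^{k}$ from part~(a) and reading off the first row yields $a_{k}=\sum_{\ell=1}^{n}[\mathbf{T}_n]_{k,\ell}\,\omega_{\ell}$, completing~(b). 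The main obstacle I foresee is the dimension bound $\dim\EuScript{A}_n\leqslant n$: one has to make sure that the propagation of the first row through the recurrence really does produce $[\mathbf{A}_n]_{n+1,\ell}=0$ consistently, but this is handled cleanly by the commutation reformulation together with the explicit spectral information on $\boldsymbol{\Omega}_n$.
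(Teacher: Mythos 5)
Your proof is correct and follows essentially the argument the paper itself relies on: the paper omits the proof of this lemma entirely, deferring to Propositions 2.1 and 2.2 of Bini and Capovani, and your reconstruction (identifying $\EuScript{A}_{n}$ with the centralizer of $\boldsymbol{\Omega}_{n}$, bounding $\dim \EuScript{A}_{n} \leqslant n$ by propagating the first row through the recurrence, using the $n$ distinct eigenvalues $2\cos\left(j\pi/(n+1)\right)$ for the lower bound, and identifying $\left[\mathbf{T}_{n}\right]_{k,\ell}$ with $\left[\boldsymbol{\Omega}_{n}^{\ell-1}\right]_{1,k}$ to get unit upper triangularity) is exactly that classical route. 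All the steps check out, including the walk-counting justification that $\mathbf{T}_{n}$ is unit upper triangular and hence that $\mathbf{T}_{n}\boldsymbol{\omega}=\mathbf{a}$ is uniquely solvable.
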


Given a positive integer $n$, $\mathbf{S}_{n+2}$ will denote the $(n+2) \times (n+2)$ symmetric, involutory and orthogonal matrix defined by
\begin{equation}\label{eq:2.1}
\left[\mathbf{S}_{n+2} \right]_{k,\ell} := \sqrt{\frac{2}{n+3}} \sin \left(\frac{k \ell \pi}{n+3} \right).
\end{equation}
Our second auxiliary result makes useful Lemma~\ref{lem1} above and provides us an orthogonal diagonalization for the following $(n+2) \times (n+2)$ complex matrix
\begin{equation}\label{eq:2.2}
\mathbf{\widehat{H}}_{n+2} = \left[
\begin{array}{ccccccccccc}
  0 & \ldots & \ldots & \ldots & \ldots & \ldots & 0 & a & b & c - a & d - b \\
  \vdots & & & & & \iddots & a & b & c & d & c - a \\
  \vdots & & & & \iddots & \iddots & b & c & d & c & b \\
  \vdots & & & \iddots & \iddots & \iddots & c & d & c & b & a \\
  \vdots & & \iddots & \iddots & \iddots & \iddots & d & c & b & a & 0 \\
  \vdots & \iddots & \iddots & \iddots & \iddots & \iddots & \iddots & \iddots & \iddots & \iddots & \vdots \\
  0 & a & b & c & d & \iddots & \iddots & \iddots & \iddots & & \vdots \\
  a & b & c & d & c & \iddots & \iddots & \iddots & & & \vdots \\
  b & c & d & c & b & \iddots & \iddots & & & & \vdots \\
  c - a & d & c & b & a & \iddots & & & & & \vdots \\
  d - b & c - a & b & a & 0 & \ldots & \ldots & \ldots & \ldots & \ldots & 0
\end{array}
\right].
\end{equation}


\begin{lem}\label{lem2}
Let $n \in \mathbb{N}$, $a,b,c,d \in \mathbb{C}$ and
\begin{equation}\label{eq:2.3}
\lambda_{k} = -2a \cos \left(\tfrac{n k \pi}{n+3} \right) - 2b \cos \left[\tfrac{(n + 1) k \pi}{n+3} \right] - 2c \cos \left[\tfrac{(n + 2) k \pi}{n+3} \right] - d \cos \left(k \pi \right), \quad k = 1,2,\ldots,n+2.
\end{equation}
If $\mathbf{\widehat{H}}_{n+2}$ is the $(n+2) \times (n+2)$ matrix \eqref{eq:2.2} then
\begin{equation*}
\mathbf{\widehat{H}}_{n+2} = \mathbf{S}_{n+2} \boldsymbol{\Lambda}_{n+2} \mathbf{S}_{n+2}
\end{equation*}
where $\boldsymbol{\Lambda}_{n+2} = \mathrm{diag} \left(\lambda_{1},\lambda_{2},\ldots, \lambda_{n+2} \right)$ and $\mathbf{S}_{n+2}$ is the $(n+2) \times (n+2)$ matrix \eqref{eq:2.1}.
\end{lem}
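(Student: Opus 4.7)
\medskip

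The strategy is to place $\mathbf{\widehat{H}}_{n+2}$ inside the commutative algebra $\EuScript{A}_{n+2}$ of Lemma~\ref{lem1}, so that the diagonalization question is immediately reduced to the well-known spectral decomposition of the Jacobi matrix $\boldsymbol{\Omega}_{n+2}$ by the sine matrix $\mathbf{S}_{n+2}$.

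First I would check that $\mathbf{\widehat{H}}_{n+2}\in\EuScript{A}_{n+2}$, i.e.\ that the five-point cross relation $[\mathbf{\widehat{H}}_{n+2}]_{k-1,\ell}+[\mathbf{\widehat{H}}_{n+2}]_{k+1,\ell}=[\mathbf{\widehat{H}}_{n+2}]_{k,\ell-1}+[\mathbf{\widehat{H}}_{n+2}]_{k,\ell+1}$ holds at every position. Because $\mathbf{\widehat{H}}_{n+2}$ is locally constant along anti-diagonals inside the heptadiagonal band, both sides automatically coincide at every interior entry (each side collects the two neighbouring anti-diagonal values). The verification therefore reduces to the finitely many boundary positions near the four corners, where the perturbations $c-a$ and $d-b$ have been tailored to compensate for the ambient zeros just outside the band; each boundary case then becomes a two-term identity such as $0+d=b+(d-b)$ or $0+(c-a)=(c-a)+0$. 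This is the bookkeeping part of the argument and is expected to be routine.

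Once membership is established, Lemma~\ref{lem1}(b) yields a polynomial $p$ of degree at most $n+1$ such that $\mathbf{\widehat{H}}_{n+2}=p(\boldsymbol{\Omega}_{n+2})$. Combining this with the classical spectral decomposition $\boldsymbol{\Omega}_{n+2}=\mathbf{S}_{n+2}\,\mathrm{diag}(\mu_1,\ldots,\mu_{n+2})\,\mathbf{S}_{n+2}$ with $\mu_k=2\cos(k\pi/(n+3))$, and using that $\mathbf{S}_{n+2}$ is involutory, gives $\mathbf{\widehat{H}}_{n+2}=\mathbf{S}_{n+2}\boldsymbol{\Lambda}_{n+2}\mathbf{S}_{n+2}$ with $[\boldsymbol{\Lambda}_{n+2}]_{kk}=p(\mu_k)$.

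It remains to identify $p(\mu_k)$ with the closed form \eqref{eq:2.3}. Rather than computing the coefficients of $p$ explicitly, I would read the eigenvalue off the first coordinate of $\mathbf{\widehat{H}}_{n+2}\mathbf{s}_k=\lambda_k\mathbf{s}_k$, where $\mathbf{s}_k$ is the $k$-th column of $\mathbf{S}_{n+2}$. Since only the last four entries of the first row of $\mathbf{\widehat{H}}_{n+2}$ are nonzero, this yields, with $\theta_k:=k\pi/(n+3)$,
\[
\lambda_k\sin\theta_k=a\sin((n-1)\theta_k)+b\sin(n\theta_k)+(c-a)\sin((n+1)\theta_k)+(d-b)\sin((n+2)\theta_k).
\]
The reflection identity $\sin((n+3-j)\theta_k)=(-1)^{k+1}\sin(j\theta_k)$ rewrites the right-hand side as $(-1)^{k+1}[a\sin(4\theta_k)+b\sin(3\theta_k)+(c-a)\sin(2\theta_k)+(d-b)\sin\theta_k]$, and the Chebyshev-type ratios $\sin(2\theta)/\sin\theta=2\cos\theta$, $\sin(3\theta)/\sin\theta=2\cos(2\theta)+1$, $\sin(4\theta)/\sin\theta=2\cos(3\theta)+2\cos\theta$ collapse it to $(-1)^{k+1}[2a\cos(3\theta_k)+2b\cos(2\theta_k)+2c\cos\theta_k+d]\sin\theta_k$, the ``$-a$'' and ``$-b$'' inside the corner entries exactly absorbing the spurious $2a\cos\theta_k$ and $b$ produced by the expansions of $\sin(4\theta_k)/\sin\theta_k$ and $\sin(3\theta_k)/\sin\theta_k$. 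Finally, the mirror identities $\cos((n+3-j)\theta_k)=(-1)^k\cos(j\theta_k)$ together with $\cos(k\pi)=(-1)^k$ recognise this expression as the $\lambda_k$ of \eqref{eq:2.3}. The main obstacle is thus this last telescoping cancellation, in which the bespoke corner corrections must precisely swallow the Chebyshev overshoot.
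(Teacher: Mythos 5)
Your proof is correct and follows essentially the same route as the paper: both arguments rest on placing $\mathbf{\widehat{H}}_{n+2}$ in the algebra $\EuScript{A}_{n+2}$ of Lemma~\ref{lem1} and then diagonalizing through the spectral decomposition of $\boldsymbol{\Omega}_{n+2}$ by the involutory sine matrix $\mathbf{S}_{n+2}$. The only (correct) variation is in how the diagonal entries are identified: the paper writes $\mathbf{\widehat{H}}_{n+2}=\left[(d-2b)\mathbf{I}_{n+2}+(c-3a)\boldsymbol{\Omega}_{n+2}+b\,\boldsymbol{\Omega}_{n+2}^{2}+a\,\boldsymbol{\Omega}_{n+2}^{3}\right]\mathbf{J}_{n+2}$ and uses that $\mathbf{J}_{n+2}$ is likewise diagonalized by $\mathbf{S}_{n+2}$, whereas you read the eigenvalue off the first coordinate of $\mathbf{\widehat{H}}_{n+2}\mathbf{s}_{k}$; both computations land on \eqref{eq:2.3}.
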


\begin{proof}
Suppose $n \in \mathbb{N}$ and $a,b,c,d \in \mathbb{C}$. Since $\mathbf{\widehat{H}}_{n+2} \in \EuScript{A}_{n+2}$ and its first row is
\begin{equation*}
\mathbf{a}^{\top} = (0,\ldots,0,a,b,c - a,d - b)
\end{equation*}
we have, from Lemma~\ref{lem1},
\begin{equation*}
\mathbf{\widehat{H}}_{n+2} = \left[(d - 2b) \mathbf{I}_{n+2} + (c - 3a) \boldsymbol{\Omega}_{n+2} + b \, \boldsymbol{\Omega}_{n+2}^{2} + a \, \boldsymbol{\Omega}_{n+2}^{3} \right] \mathbf{J}_{n+2}
\end{equation*}
where $\mathbf{J}_{n+2}$ is the $(n+2) \times (n+2)$ exchange matrix (that is, the matrix such that $\left[\mathbf{J}_{n+2} \right]_{k,n-k+3} = 1$ for every $1 \leqslant k \leqslant n+2$ with the remaining entries zero). Using the spectral decomposition
\begin{equation*}
\boldsymbol{\Omega}_{n+2} = \sum_{\ell=1}^{n+2} 2 \cos \left(\frac{\ell \pi}{n + 3} \right) \mathbf{s}_{\ell} \, \mathbf{s}_{\ell}^{\top},
\end{equation*}
where
\begin{equation*}
\mathbf{s}_{\ell} = \left[
\begin{array}{c}
  \sqrt{\frac{2}{n+3}} \sin \left(\frac{\ell \pi}{n + 3} \right) \\
  \sqrt{\frac{2}{n+3}} \sin \left(\frac{2\ell \pi}{n + 3} \right) \\
  \vdots \\
  \sqrt{\frac{2}{n+3}} \sin \left[\frac{(n + 1)\ell \pi}{n + 3} \right]
\end{array}
\right]
\end{equation*}
(i.e. the $\ell$th column of $\mathbf{S}_{n+2}$ in \eqref{eq:2.1}) and the decomposition
\begin{equation*}
\mathbf{J}_{n+2} = \mathbf{S}_{n+2} \, \mathrm{diag}\left[(-1)^{2},(-1)^{3},\ldots,(-1)^{n+3} \right] \mathbf{S}_{n+2}
\end{equation*}
(see Lemma 1 of \cite{Gutierrez14}) with $\mathbf{S}_{n+2}$ given by \eqref{eq:2.1}, it follows
\begin{equation*}
\begin{split}
\mathbf{\widehat{H}}_{n+2} &= \left\{\sum_{\ell=1}^{n+2} \left[(d - 2b) + 2 (c - 3a) \cos \left(\frac{\ell \pi}{n + 3} \right) + 4b \cos^{2} \left(\frac{\ell \pi}{n + 3} \right) + 8a \cos^{3} \left(\frac{\ell \pi}{n + 3} \right) \right] \mathbf{s}_{\ell} \, \mathbf{s}_{\ell}^{\top} \right\} \mathbf{J}_{n+2} \\
&= \mathbf{S}_{n+2} \, \mathrm{diag} \left[(-1)^{2} \lambda_{1}, (-1)^{3} \lambda_{2},\ldots,(-1)^{n+3} \lambda_{n+2} \right] \, \mathrm{diag} \left[(-1)^{2},(-1)^{3},\ldots,(-1)^{n+3} \right] \mathbf{S}_{n+2} \\
&= \mathbf{S}_{n+2} \, \mathrm{diag} \left(\lambda_{1},\lambda_{2},\ldots, \lambda_{n+2} \right) \mathbf{S}_{n+2}
\end{split}.
\end{equation*}
The proof is completed.
\end{proof}

The decomposition below plays a central role in the study of the spectral properties of anti-heptadiagonal persymmetric Hankel matrices. The statement is split in two cases of $n \in \mathbb{N}$: $n$ is an even number and $n$ is an odd number. For the case in which $n$ is even, we shall take the same steps as in \cite{Bini83} (see pages $106$ and $107$); the case in which $n$ is odd, this procedure must be improved and we shall follow closely the \emph{bordering technique} of Fasino (see \cite{Fasino88}, pages $303$ and $304$).


\begin{lem}\label{lem3}
Let $n \in \mathbb{N}$, $a,b,c,d \in \mathbb{C}$, $\lambda_{k}$, $k=1,2,\ldots,n+2$ be given by \eqref{eq:2.3} and $\mathbf{H}_{n}$ the $n \times n$ matrix \eqref{eq:1.1}.

\medskip

\noindent \textnormal{(a)} If $n$ is even then
\begin{subequations}
\begin{equation}\label{eq:2.4a}
\mathbf{H}_{n} = \mathbf{R}_{n} \mathbf{P}_{n}^{\top} \left[
\begin{array}{cc}
\mathrm{diag} \left(\lambda_{3},\lambda_{5},\ldots,\lambda_{n+1} \right) + \lambda_{1} \mathbf{u} \mathbf{u}^{\top} & \mathbf{O} \\
\mathbf{O} & \mathrm{diag} \left(\lambda_{2},\lambda_{4},\ldots,\lambda_{n} \right) + \lambda_{n+2} \mathbf{v} \mathbf{v}^{\top}
\end{array}
\right] \mathbf{P}_{n} \mathbf{R}_{n}
\end{equation}
where
\begin{equation}\label{eq:2.4b}
\mathbf{u} = \left[
\begin{array}{c}
  \frac{\sin\left(\frac{3\pi}{n+3} \right)}{\sin \left(\frac{\pi}{n+3} \right)} \\[10pt]
  \frac{\sin\left(\frac{5\pi}{n+3} \right)}{\sin \left(\frac{\pi}{n+3} \right)}  \\
  \vdots \\[2pt]
  \frac{\sin\left[\frac{(n+1)\pi}{n+3} \right]}{\sin \left(\frac{\pi}{n+3} \right)}
\end{array}
\right], \quad
\mathbf{v} = \left[
\begin{array}{c}
  \frac{\sin\left(\frac{2\pi}{n+3} \right)}{\sin \left(\frac{\pi}{n+3} \right)} \\[10pt]
  \frac{\sin\left(\frac{4\pi}{n+3} \right)}{\sin \left(\frac{\pi}{n+3} \right)}  \\
  \vdots \\[2pt]
  \frac{\sin\left(\frac{n\pi}{n+3} \right)}{\sin \left(\frac{\pi}{n+3} \right)}
\end{array}
\right],
\end{equation}
$\mathbf{R}_{n}$ is the $n \times n$ matrix given by
\begin{equation}\label{eq:2.4c}
\left[\mathbf{R}_{n} \right]_{k,\ell} = \sqrt{\frac{2}{n + 3}} \sin \left[\frac{(k + 1)(\ell + 1)\pi}{n + 3} \right]
\end{equation}
and $\mathbf{P}_{n}$ is the $n \times n$ permutation matrix defined by
\begin{equation}\label{eq:2.4d}
\left[\mathbf{P}_{n} \right]_{k,\ell} = \left\{
\begin{array}{l}
1 \; \; \textit{if $\ell = 2k - n - 1$ or $\ell = 2k$} \\[5pt]
0 \; \; \textit{otherwise}
\end{array}
\right..
\end{equation}
\end{subequations}

\medskip

\noindent \textnormal{(b)} If $n$ is odd then
\begin{subequations}
\begin{equation}\label{eq:2.5a}
\mathbf{H}_{n} = \mathbf{R}_{n} \mathbf{P}_{n} \left[
\begin{array}{cc}
\mathrm{diag} \left(\lambda_{3},\lambda_{5},\ldots,\lambda_{n}, \lambda_{n+2} \right) + \lambda_{1} \mathbf{u} \mathbf{u}^{\top} & \mathbf{O} \\
\mathbf{O} & \mathrm{diag} \left(\lambda_{2},\lambda_{4},\ldots,\lambda_{n-1} \right) + \lambda_{n+1} \mathbf{v} \mathbf{v}^{\top}
\end{array}
\right] \mathbf{P}_{n}^{\top} \mathbf{R}_{n}^{\top}
\end{equation}
where
\begin{equation}\label{eq:2.5b}
\mathbf{u} = \left[
\begin{array}{c}
  \frac{\sin\left(\frac{3\pi}{n+3} \right)}{\sin \left(\frac{\pi}{n+3} \right)} \\[10pt]
  \frac{\sin\left(\frac{5\pi}{n+3} \right)}{\sin \left(\frac{\pi}{n+3} \right)}  \\
  \vdots \\[2pt]
  \frac{\sin\left[\frac{(n+1)\pi}{n+3} \right]}{\sin \left(\frac{\pi}{n+3} \right)} \\[7pt]
  1
\end{array}
\right], \quad
\mathbf{v} = \left[
\begin{array}{c}
  \frac{\sin\left(\frac{2\pi}{n+3} \right)}{\sin \left[\frac{(n+1)\pi}{n+3} \right]} \\[10pt]
  \frac{\sin\left(\frac{4\pi}{n+3} \right)}{\sin \left[\frac{(n+1)\pi}{n+3} \right]}  \\
  \vdots \\[2pt]
  \frac{\sin\left[\frac{(n-1)\pi}{n+3} \right]}{\sin \left[\frac{(n+1)\pi}{n+3} \right]}
\end{array}
\right],
\end{equation}
$\mathbf{R}_{n}$ is the $n \times n$ matrix given by
\begin{equation}\label{eq:2.5c}
\left[\mathbf{R}_{n} \right]_{k,\ell} = \left\{
\begin{array}{c}
  \sqrt{\frac{2}{n + 3}} \sin \left[\frac{(k + 1)(\ell + 1)\pi}{n + 3} \right] \; \; \textit{if $\ell < n$} \\[10pt]
  \sqrt{\frac{2}{n + 3}} \sin \left[\frac{(k + 1)(n + 2)\pi}{n + 3} \right] \; \; \textit{if $\ell = n$}
\end{array}
\right.
\end{equation}
and $\mathbf{P}_{n}$ is the $n \times n$ permutation matrix defined by
\begin{equation}\label{eq:2.5d}
\left[\mathbf{P}_{n} \right]_{k,\ell} = \left\{
\begin{array}{l}
1 \; \; \textit{if $k = 2\ell$ or $k = 2\ell - n - 2$} \\[5pt]
1 \; \; \textit{if $k = n$ and $\ell = (n+1)/2$} \\[5pt]
0 \; \; \textit{otherwise}
\end{array}
\right..
\end{equation}
\end{subequations}
\end{lem}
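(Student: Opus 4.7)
The plan is to realize $\mathbf{H}_n$ as the central $n\times n$ principal submatrix of the bordered matrix $\mathbf{\widehat{H}}_{n+2}$ from Lemma~\ref{lem2}, and then exploit its orthogonal diagonalization. Direct inspection of \eqref{eq:1.1} and \eqref{eq:2.2} reveals that deleting the first and last row and column of $\mathbf{\widehat{H}}_{n+2}$ returns $\mathbf{H}_n$, since the boundary modifications in $\mathbf{\widehat{H}}_{n+2}$ (the entries involving $c-a$ and $d-b$) lie entirely in the discarded rows and columns. Lemma~\ref{lem2} then gives
\begin{equation*}
\mathbf{H}_n = \mathbf{M}\,\boldsymbol{\Lambda}_{n+2}\,\mathbf{M}^\top = \sum_{\ell=1}^{n+2}\lambda_\ell\,\mathbf{m}_\ell\,\mathbf{m}_\ell^\top,
\end{equation*}
where $\mathbf{M}$ is the $n\times(n+2)$ submatrix of $\mathbf{S}_{n+2}$ formed by rows $2,\ldots,n+1$ and $\mathbf{m}_\ell$ is its $\ell$-th column, whose $k$-th entry is $\sqrt{2/(n+3)}\sin((k+1)\ell\theta)$ with $\theta:=\pi/(n+3)$.

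The central step is to pin down $\ker \mathbf{M}$, which has dimension $2$. A vector $\mathbf{x}$ lies in this kernel iff $\mathbf{S}_{n+2}\mathbf{x}$ has zero entries in positions $2,\ldots,n+1$; because $\mathbf{S}_{n+2}$ is involutory, this means $\mathbf{x}$ is a linear combination of the first and last columns of $\mathbf{S}_{n+2}$. Using the elementary identity $\sin(\ell(n+2)\theta)=(-1)^{\ell+1}\sin(\ell\theta)$, the two sums $\mathbf{s}^{(1)}\pm\mathbf{s}^{(n+2)}$ produce kernel vectors supported only on odd (resp.\ even) indices, each with entries proportional to $\sin(\ell\theta)$. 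This yields the fundamental relations
\begin{equation*}
\sum_{\ell\ \mathrm{odd}}\sin(\ell\theta)\,\mathbf{m}_\ell=\mathbf{0},\qquad \sum_{\ell\ \mathrm{even}}\sin(\ell\theta)\,\mathbf{m}_\ell=\mathbf{0}.
\end{equation*}

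For $n$ even the odd indices are $\{1,3,\ldots,n+1\}$ and the even ones $\{2,4,\ldots,n+2\}$; using $\sin((n+2)\theta)=\sin\theta$, solving the two relations for $\mathbf{m}_1$ and $\mathbf{m}_{n+2}$ reproduces exactly the vectors $\mathbf{u},\mathbf{v}$ of \eqref{eq:2.4b} (up to an overall sign, which is inconsequential inside the rank-one pieces $\mathbf{u}\mathbf{u}^\top$ and $\mathbf{v}\mathbf{v}^\top$). Substituting these expressions into the spectral expansion and regrouping by parity, the terms $\lambda_1\mathbf{m}_1\mathbf{m}_1^\top$ and $\lambda_{n+2}\mathbf{m}_{n+2}\mathbf{m}_{n+2}^\top$ become rank-one corrections on the odd and even blocks, while the remaining terms give the two diagonal blocks. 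Concatenating the surviving columns in the order $\mathbf{m}_3,\mathbf{m}_5,\ldots,\mathbf{m}_{n+1},\mathbf{m}_2,\mathbf{m}_4,\ldots,\mathbf{m}_n$ one recognizes $\mathbf{R}_n\mathbf{P}_n^\top$---the permutation $\mathbf{P}_n$ in \eqref{eq:2.4d} simply reorders the columns of $\mathbf{R}_n$ by the parity of the original index $\ell+1$---which yields \eqref{eq:2.4a}.

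For $n$ odd the parity sets shift: odd indices become $\{1,3,\ldots,n,n+2\}$ and even ones $\{2,4,\ldots,n+1\}$. Solving the same two relations now for $\mathbf{m}_1$ and $\mathbf{m}_{n+1}$ yields $\mathbf{u}$ with last entry $1$ (because $\sin((n+2)\theta)/\sin\theta=1$) and $\mathbf{v}$ normalized by $\sin((n+1)\theta)$, matching \eqref{eq:2.5b}; the natural basis thus contains $\mathbf{m}_{n+2}$ in place of $\mathbf{m}_{n+1}$, which accounts for the modified last column of $\mathbf{R}_n$ in \eqref{eq:2.5c} and the adjusted permutation \eqref{eq:2.5d}. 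A small bookkeeping point is that $\mathbf{R}_n$ is no longer symmetric in this case, forcing the asymmetric outer factors $\mathbf{R}_n\mathbf{P}_n$ and $\mathbf{P}_n^\top\mathbf{R}_n^\top$ in \eqref{eq:2.5a}. The principal obstacle is the kernel identification together with its clean splitting by parity; once that is secured, everything else amounts to careful tracking of indices and permutation actions.
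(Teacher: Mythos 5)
Your proposal is correct and is essentially the paper's own argument (the bordering technique): both start from Lemma~\ref{lem2}'s diagonalization $\mathbf{\widehat{H}}_{n+2}=\mathbf{S}_{n+2}\boldsymbol{\Lambda}_{n+2}\mathbf{S}_{n+2}$, observe that $\mathbf{H}_n$ is the central $n\times n$ submatrix, and use the involutory property of $\mathbf{S}_{n+2}$ to express the two border columns as combinations of the retained ones before permuting by parity. Your kernel formulation of that last step is just a repackaging of the paper's identities $\mathbf{R}_n^{-1}\mathbf{x}=-(\mathbf{x}+\mathbf{y})/(2\theta)$ and $\mathbf{R}_n^{-1}\mathbf{y}=-(\mathbf{x}-\mathbf{y})/(2\theta)$, with the modest advantage that it does not presuppose $\mathbf{R}_n$ invertible and makes visible why the odd case forces the swap $\mathbf{m}_{n+1}\mapsto\mathbf{m}_{n+2}$ (the even-index dependency lies entirely inside the naive central block), which the paper instead introduces via the permutation $\mathbf{K}_{n+2}$.
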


\begin{proof}
Consider $n \in \mathbb{N}$, $a,b,c,d \in \mathbb{C}$ and $\lambda_{k}$, $k = 1,2,\ldots,n+2$ given by \eqref{eq:2.3}.

\medskip

\noindent (a) Supposing $n$ even and $\mathbf{S}_{n+2}$ given by \eqref{eq:2.1}, we have
\begin{equation*}
\mathbf{S}_{n+2} = \left[
 \setlength{\extrarowheight}{2pt}
\begin{array}{c|c|c}
  \theta & \mathbf{x}^{\top} & \theta \\ \hline
  \mathbf{x} & \mathbf{R}_{n} & \mathbf{y} \\[2pt] \hline
  \theta & \mathbf{y}^{\top} & -\theta
\end{array}
\right]
\end{equation*}
where $\theta := \sqrt{\frac{2}{n + 3}} \sin \left(\frac{\pi}{n + 3} \right)$,
\begin{equation}\label{eq:2.6}
\mathbf{x} = \left[
\begin{array}{c}
  \sqrt{\frac{2}{n + 3}} \sin \left(\frac{2\pi}{n + 3} \right) \\[10pt]
  \sqrt{\frac{2}{n + 3}} \sin \left(\frac{3\pi}{n + 3} \right) \\
  \vdots \\[2pt]
  \sqrt{\frac{2}{n + 3}} \sin \left[\frac{(n+1) \pi}{n + 3} \right]
\end{array}
\right] \quad \textnormal{and} \quad \mathbf{y} = \left[
\begin{array}{c}
  (-1) \sqrt{\frac{2}{n + 3}} \sin \left(\frac{2\pi}{n + 3} \right) \\[10pt]
  (-1)^{2} \sqrt{\frac{2}{n + 3}} \sin \left(\frac{3\pi}{n + 3} \right) \\
  \vdots \\[2pt]
  (-1)^{n} \sqrt{\frac{2}{n + 3}} \sin \left[\frac{(n+1) \pi}{n + 3} \right]
\end{array}
\right].
\end{equation}
From Lemma~\ref{lem2}, we obtain $\mathbf{\widehat{H}}_{n+2} = \mathbf{S}_{n+2} \, \mathrm{diag} \left(\lambda_{1},\lambda_{2},\ldots,\lambda_{n+2} \right) \mathbf{S}_{n+2}$ and
since $\mathbf{H}_{n}$ is obtained by deleting the first and last row and column of the matrix $\mathbf{\widehat{H}}_{n+2}$ in \eqref{eq:2.2}, it follows
\begin{equation*}
\mathbf{H}_{n} = \mathbf{R}_{n} \left[\mathrm{diag}(\lambda_{2}, \lambda_{3}, \ldots, \lambda_{n+1}) + \lambda_{1} \mathbf{R}_{n}^{-1} \mathbf{x} \left(\mathbf{R}_{n}^{-1} \mathbf{x} \right)^{\top} + \lambda_{n+2} \mathbf{R}_{n}^{-1} \mathbf{y} \left(\mathbf{R}_{n}^{-1} \mathbf{y} \right)^{\top} \right] \mathbf{R}_{n}
\end{equation*}
with
\begin{gather}
\mathbf{R}_{n}^{-1} \mathbf{x} = - \frac{\mathbf{x} + \mathbf{y}}{2\theta} \label{eq:2.7}, \\
\mathbf{R}_{n}^{-1} \mathbf{y} = - \frac{\mathbf{x} - \mathbf{y}}{2\theta}. \label{eq:2.8}
\end{gather}
The odd components of \eqref{eq:2.7} and the even components of \eqref{eq:2.8} are zero; moreover, the $k$th even component of \eqref{eq:2.7} and the $k$th odd component of \eqref{eq:2.8} is
\begin{equation*}
-\frac{\sin \left[\frac{(k+1)\pi}{n+3} \right]}{\sin \left(\frac{\pi}{n+3} \right)}.
\end{equation*}
Permuting rows and columns of $\mathrm{diag}(\lambda_{2}, \lambda_{3}, \ldots, \lambda_{n+1}) + \lambda_{1} \mathbf{R}_{n}^{-1} \mathbf{x} \left(\mathbf{R}_{n}^{-1} \mathbf{x} \right)^{\top} + \lambda_{n+2} \mathbf{R}_{n}^{-1} \mathbf{y} \left(\mathbf{R}_{n}^{-1} \mathbf{y} \right)^{\top}$ according to \eqref{eq:2.4d} yields \eqref{eq:2.4a}.

\medskip

\noindent (b) Consider $n$ odd and the $(n+2) \times (n+2)$ orthogonal matrix $\mathbf{\widehat{S}}_{n+2} := \mathbf{S}_{n+2} \mathbf{K}_{n+2}$, where $\mathbf{S}_{n+2}$ is the matrix \eqref{eq:2.1} and $\mathbf{K}_{n+2}$ is the $(n+2) \times (n+2)$ permutation matrix
\begin{equation*}
\mathbf{K}_{n+2} := \left[
\setlength{\extrarowheight}{2pt}
\begin{array}{c|c|c}
  \mathbf{I}_{n} & \mathbf{0} & \mathbf{0} \\[2pt] \hline
  \mathbf{0}^{\top} & 0 & 1 \\[2pt] \hline
  \mathbf{0}^{\top} & 1 & 0
\end{array}
\right].
\end{equation*}
Using Lemma~\ref{lem2}, we have $\mathbf{\widehat{H}}_{n+2} = \mathbf{\widehat{S}}_{n+2} \, \mathrm{diag} \left(\lambda_{1},\lambda_{2},\ldots,\lambda_{n},\lambda_{n+2},\lambda_{n+1} \right) \mathbf{\widehat{S}}_{n+2}^{\top}$. Putting
\begin{equation*}
\mathbf{\widehat{S}}_{n+2} = \left[
\setlength{\extrarowheight}{2pt}
\begin{array}{c|c|c}
  \theta & \mathbf{z}^{\top} & \eta \\ \hline
  \mathbf{x} & \mathbf{R}_{n} & \mathbf{y} \\[2pt] \hline
  \theta & \mathbf{w}^{\top} & -\eta
\end{array}
\right]
\end{equation*}
where $\theta := \sqrt{\frac{2}{n + 3}} \sin \left(\frac{\pi}{n + 3} \right)$, $\eta := \sqrt{\frac{2}{n + 3}} \sin \left[\frac{(n+1)\pi}{n + 3} \right]$,

\begin{equation} \label{eq:2.9}
\begin{split}
& \mathbf{x} = \left[
\begin{array}{c}
  \sqrt{\frac{2}{n + 3}} \sin \left(\frac{2\pi}{n + 3} \right) \\[10pt]
  \sqrt{\frac{2}{n + 3}} \sin \left(\frac{3\pi}{n + 3} \right) \\
  \vdots \\[2pt]
  \sqrt{\frac{2}{n + 3}} \sin \left[\frac{(n+1) \pi}{n + 3} \right]
\end{array}
\right], \quad \mathbf{y} = \left[
\begin{array}{c}
  (-1) \sqrt{\frac{2}{n + 3}} \sin \left(\frac{4\pi}{n + 3} \right) \\[10pt]
  (-1)^{2} \sqrt{\frac{2}{n + 3}} \sin \left(\frac{6\pi}{n + 3} \right) \\
  \vdots \\[2pt]
  (-1)^{n} \sqrt{\frac{2}{n + 3}} \sin \left[\frac{2(n+1) \pi}{n + 3} \right]
\end{array}
\right] \\
& \mathbf{z} = \left[
\begin{array}{c}
  \sqrt{\frac{2}{n + 3}} \sin \left(\frac{2\pi}{n + 3} \right) \\[10pt]
  \sqrt{\frac{2}{n + 3}} \sin \left(\frac{3\pi}{n + 3} \right) \\
  \vdots \\[2pt]
  \sqrt{\frac{2}{n + 3}} \sin \left(\frac{n\pi}{n + 3} \right) \\[10pt]
  \sqrt{\frac{2}{n + 3}} \sin \left[\frac{(n+2) \pi}{n + 3} \right]
\end{array}
\right], \quad \mathbf{w} = \left[
\begin{array}{c}
  (-1) \sqrt{\frac{2}{n + 3}} \sin \left(\frac{2\pi}{n + 3} \right) \\[10pt]
  (-1)^{2} \sqrt{\frac{2}{n + 3}} \sin \left(\frac{3\pi}{n + 3} \right) \\
  \vdots \\[2pt]
  (-1)^{n-1} \sqrt{\frac{2}{n + 3}} \sin \left(\frac{n\pi}{n + 3} \right) \\[10pt]
   \sqrt{\frac{2}{n + 3}} \sin \left[\frac{(n+2) \pi}{n + 3} \right]
\end{array}
\right]
\end{split}
\end{equation}
and $\mathbf{R}_{n}$ given by \eqref{eq:2.5c}, we get
\begin{equation*}
\mathbf{H}_{n} = \mathbf{R}_{n} \left[\mathrm{diag}(\lambda_{2}, \lambda_{3}, \ldots, \lambda_{n},\lambda_{n+2}) + \lambda_{1} \mathbf{R}_{n}^{-1} \mathbf{x} \left(\mathbf{R}_{n}^{-1} \mathbf{x} \right)^{\top} + \lambda_{n+1} \mathbf{R}_{n}^{-1} \mathbf{y} \left(\mathbf{R}_{n}^{-1} \mathbf{y} \right)^{\top} \right] \mathbf{R}_{n}^{\top}
\end{equation*}
with
\begin{gather}
\mathbf{R}_{n}^{-1} \mathbf{x} = - \frac{\mathbf{z} + \mathbf{w}}{2\theta}, \label{eq:2.10} \\
\mathbf{R}_{n}^{-1} \mathbf{y} = - \frac{\mathbf{z} - \mathbf{w}}{2\eta} \label{eq:2.11}
\end{gather}
since $\mathbf{H}_{n}$ is obtained by deleting the first and last row and column of the matrix $\mathbf{\widehat{H}}_{n+2}$. The first $n-1$ odd components of \eqref{eq:2.10} are zero; additionally, the even components of \eqref{eq:2.11} are zero as well as its last component. In the first $n-1$ components of \eqref{eq:2.10}, the $k$th even component is
\begin{equation*}
-\frac{\sin \left[\frac{(k+1)\pi}{n+3} \right]}{\sin \left(\frac{\pi}{n+3} \right)},
\end{equation*}
and the $k$th odd component of \eqref{eq:2.11} is
\begin{equation*}
-\frac{\sin \left[\frac{(k+1)\pi}{n+3} \right]}{\sin \left[\frac{(n+1)\pi}{n + 3} \right]}.
\end{equation*}
Permuting rows and columns of
\begin{equation*}
\mathrm{diag}(\lambda_{2}, \lambda_{3}, \ldots, \lambda_{n},\lambda_{n+2}) + \lambda_{1} \mathbf{R}_{n}^{-1} \mathbf{x} \left(\mathbf{R}_{n}^{-1} \mathbf{x} \right)^{\top} + \lambda_{n+1} \mathbf{R}_{n}^{-1} \mathbf{y} \left(\mathbf{R}_{n}^{-1} \mathbf{y} \right)^{\top}
\end{equation*}
according to \eqref{eq:2.5d} we obtain \eqref{eq:2.5a} establishing the thesis.
\end{proof}

\begin{rem}
Let us observe that for $n$ odd the condition $\left[\mathbf{\widehat{S}}_{n+2} \right]_{n+2,\ell} = (-1)^{\ell + 1} \left[\mathbf{\widehat{S}}_{n+2} \right]_{1,\ell}$, $\ell=1,2,\ldots,n+2$ is not satisfied (unlike the case in which $n$ is even and we have $\left[\mathbf{S}_{n+2} \right]_{n+2,\ell} = (-1)^{\ell + 1} \left[\mathbf{S}_{n+2} \right]_{1,\ell}$, $\ell=1,2,\ldots,n+2$). This foreshadowed that the bordering technique can be founded taking account only the four corners of the orthogonal matrix (in order to get $\det(\mathbf{R}_{n}) \neq 0$) and discarding any relation between the remaining entries of its first and last rows.
\end{rem}

\section{Spectral properties of $\mathbf{H}_{n}$}\label{sec:3}

\subsection{Eigenvalue localization}

\indent

The decomposition presented in Lemma~\ref{lem3} allows us to express the eigenvalues of $\mathbf{H}_{n}$ as the zeros of explicit polynomials.

\begin{thr}\label{thr1}
Let $n \in \mathbb{N}$, $a,b,c,d \in \mathbb{C}$, $\lambda_{k}$, $k=1,2,\ldots,n+2$ be given by \eqref{eq:2.3} and $\mathbf{H}_{n}$ the $n \times n$ matrix \eqref{eq:1.1}.

\medskip

\begin{subequations}
\noindent \textnormal{(a)} If $n$ is even then the eigenvalues of $\mathbf{H}_{n}$ are precisely the zeros of
\begin{equation}\label{eq:3.1a}
f(t) = \prod_{\substack{j=1}}^{\frac{n}{2}} (t - \lambda_{2j+1}) + (t - \lambda_{1}) \sum_{j=1}^{\frac{n}{2}} \frac{\sin^{2} \left[\frac{(2j + 1)\pi}{n + 3} \right]}{\sin^{2} \left(\frac{\pi}{n + 3} \right)} \prod_{\substack{\substack{m=1 \\ m \neq j}}}^{\frac{n}{2}} (t - \lambda_{2m+1})
\end{equation}
and
\begin{equation}\label{eq:3.1b}
g(t) = \prod_{\substack{j=1}}^{\frac{n}{2}} (t - \lambda_{2j}) + (t - \lambda_{n+2}) \sum_{j=1}^{\frac{n}{2}} \frac{\sin^{2} \left(\frac{2j\pi}{n + 3} \right)}{\sin^{2} \left(\frac{\pi}{n + 3} \right)} \prod_{\substack{m=1 \\ m \neq j}}^{\frac{n}{2}} (t - \lambda_{2m}).
\end{equation}
\end{subequations}

\medskip

\begin{subequations}
\noindent \textnormal{(b)} If $n$ is odd then the eigenvalues of $\mathbf{H}_{n}$ are precisely the zeros of
\begin{equation}\label{eq:3.2a}
f(t) = \prod_{j=1}^{\frac{n+1}{2}} (t - \lambda_{2j+1}) + (t - \lambda_{1}) \sum_{j=1}^{\frac{n+1}{2}} \frac{\sin^{2} \left[\frac{(2j + 1)\pi}{n + 3} \right]}{\sin^{2} \left(\frac{\pi}{n + 3} \right)} \prod_{\substack{m=1 \\ m \neq j}}^{\frac{n+1}{2}}(t - \lambda_{2m+1})
\end{equation}
and
\begin{equation}\label{eq:3.2b}
g(t) = \prod_{j=1}^{\frac{n-1}{2}}(t - \lambda_{2j}) + (t - \lambda_{n+1}) \sum_{j=1}^{\frac{n-1}{2}} \frac{\sin^{2} \left(\frac{2j\pi}{n + 3} \right)}{\sin^{2} \left[\frac{(n+1)\pi}{n + 3} \right]} \prod_{\substack{m=1 \\ m \neq j}}^{\frac{n-1}{2}} (t - \lambda_{2m}).
\end{equation}
\end{subequations}
\end{thr}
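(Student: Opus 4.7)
The plan is to use Lemma~\ref{lem3} to reduce the spectral problem for $\mathbf{H}_n$ to two independent rank-one-perturbed diagonal blocks, then apply the matrix determinant lemma to each.

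Consider first $n$ even. Let $\widetilde{\mathbf{M}} = \mathrm{diag}(\mathbf{B}_1, \mathbf{B}_2)$ be the block-diagonal middle factor in Lemma~\ref{lem3}(a), with $\mathbf{B}_1 = \mathrm{diag}(\lambda_3, \ldots, \lambda_{n+1}) + \lambda_1 \mathbf{u}\mathbf{u}^\top$ and $\mathbf{B}_2 = \mathrm{diag}(\lambda_2, \ldots, \lambda_n) + \lambda_{n+2} \mathbf{v}\mathbf{v}^\top$. Since $\mathbf{R}_n$ is symmetric (as a principal submatrix of the symmetric $\mathbf{S}_{n+2}$) and $\mathbf{P}_n$ is orthogonal, setting $T := \mathbf{P}_n \mathbf{R}_n$ gives $\mathbf{H}_n = T^\top \widetilde{\mathbf{M}} T$, and cyclic invariance of the characteristic polynomial yields
\begin{equation*}
\det(\mathbf{H}_n - t\mathbf{I}_n) = \det(\widetilde{\mathbf{M}}\,TT^\top - t\mathbf{I}_n).
\end{equation*}
Reading off the central $n \times n$ block of $\mathbf{S}_{n+2}^2 = \mathbf{I}_{n+2}$ with the partition used in the proof of Lemma~\ref{lem3}(a) gives $\mathbf{R}_n^2 = \mathbf{I}_n - \mathbf{x}\mathbf{x}^\top - \mathbf{y}\mathbf{y}^\top$ for $\mathbf{x}, \mathbf{y}$ as in \eqref{eq:2.6}. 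Since $y_k = (-1)^k x_k$ and $\mathbf{P}_n$ separates indices by parity, the cross-parity contributions in $(\mathbf{P}_n\mathbf{x})(\mathbf{P}_n\mathbf{x})^\top + (\mathbf{P}_n\mathbf{y})(\mathbf{P}_n\mathbf{y})^\top$ cancel and
\begin{equation*}
TT^\top = \mathbf{P}_n \mathbf{R}_n^2 \mathbf{P}_n^\top = \begin{pmatrix} \mathbf{I} - 2\theta^2 \mathbf{u}\mathbf{u}^\top & \mathbf{O} \\ \mathbf{O} & \mathbf{I} - 2\theta^2 \mathbf{v}\mathbf{v}^\top \end{pmatrix},
\end{equation*}
with $\theta := \sqrt{2/(n+3)}\sin(\pi/(n+3))$. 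Hence $\widetilde{\mathbf{M}}\,TT^\top$ is block diagonal, so the spectrum of $\mathbf{H}_n$ decomposes into the spectrum of $\mathbf{B}_1(\mathbf{I} - 2\theta^2 \mathbf{u}\mathbf{u}^\top)$ together with that of $\mathbf{B}_2(\mathbf{I} - 2\theta^2 \mathbf{v}\mathbf{v}^\top)$.

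Each product has the form $\mathrm{diag} + \mathbf{p}\mathbf{u}^\top$ (resp.\ $+\mathbf{q}\mathbf{v}^\top$) for an explicit column $\mathbf{p}$ (resp.\ $\mathbf{q}$), so the matrix determinant lemma brings each characteristic polynomial to the shape
\begin{equation*}
\prod_j (t - \lambda_{2j+1}) \cdot \bigl[\alpha + \beta\, s(t)\bigr], \qquad s(t) := \sum_j \frac{u_j^2}{t - \lambda_{2j+1}}.
\end{equation*}
The constants $\alpha, \beta$ collapse upon invoking the trigonometric identity
\begin{equation*}
2\theta^2(1 + \|\mathbf{u}\|^2) = 1, \qquad \textit{i.e.,} \qquad \sum_{j=0}^{n/2} \sin^2\!\bigg(\frac{(2j+1)\pi}{n+3}\bigg) = \frac{n+3}{4},
\end{equation*}
which reduces via $\sin^2 \alpha = (1 - \cos 2\alpha)/2$ to a closed-form geometric sum of cosines. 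This telescopes the first block's characteristic polynomial to $2\theta^2 f(t)$; the parallel identity $2\theta^2(1 + \|\mathbf{v}\|^2) = 1$ yields $2\theta^2 g(t)$ for the second block, proving part~(a).

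For $n$ odd the argument is parallel but relies on Lemma~\ref{lem3}(b), which yields $\mathbf{H}_n = L\widetilde{\mathbf{M}}L^\top$ with $L := \mathbf{R}_n\mathbf{P}_n$. Since $\mathbf{R}_n$ is no longer symmetric, one uses $\mathbf{R}_n^\top\mathbf{R}_n = \mathbf{I}_n - \mathbf{z}\mathbf{z}^\top - \mathbf{w}\mathbf{w}^\top$ extracted from $\widehat{\mathbf{S}}_{n+2}^\top \widehat{\mathbf{S}}_{n+2} = \mathbf{I}_{n+2}$, with $\mathbf{z}, \mathbf{w}$ from \eqref{eq:2.9}. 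The same parity/permutation calculation (now with the $\mathbf{P}_n$ of \eqref{eq:2.5d}) block-diagonalizes $L^\top L$ into blocks $\mathbf{I} - 2\theta^2\mathbf{u}\mathbf{u}^\top$ and $\mathbf{I} - 2\eta^2\mathbf{v}\mathbf{v}^\top$, where $\eta := \sqrt{2/(n+3)}\sin((n+1)\pi/(n+3))$; the analogous identities $2\theta^2(1+\|\mathbf{u}\|^2) = 1$ and $2\eta^2(1+\|\mathbf{v}\|^2) = 1$ then deliver $f(t)$ and $g(t)$ exactly as in the even case. The principal technical hurdle throughout is the parity bookkeeping that yields the block-diagonalization of $TT^\top$ (and of $L^\top L$); the trigonometric identities, while essential for the algebraic collapse, are routine Dirichlet-kernel computations.
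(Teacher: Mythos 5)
Your proposal is correct and follows essentially the same route as the paper: both rest on the decomposition of Lemma~\ref{lem3}, the block-diagonalization of $\mathbf{P}_{n} \mathbf{R}_{n}^{2} \mathbf{P}_{n}^{\top}$ (resp.\ $\mathbf{P}_{n}^{\top} \mathbf{R}_{n}^{\top} \mathbf{R}_{n} \mathbf{P}_{n}$) into $\mathbf{I} - \tfrac{4\sin^{2}(\pi/(n+3))}{n+3}\mathbf{u}\mathbf{u}^{\top}$ and its companion, the normalization identity $\mathbf{u}^{\top}\mathbf{u} + 1 = \tfrac{n+3}{4\sin^{2}(\pi/(n+3))}$, and the rank-one determinant formula. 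The only difference is cosmetic: you transfer the $\mathbf{R}_{n}$ factors by the similarity $\det(AB - t\mathbf{I}) = \det(BA - t\mathbf{I})$ and land on the matrices \eqref{eq:4.3a}, \eqref{eq:4.3d} (which the paper only introduces in Lemma~\ref{lem4}), whereas the paper inverts $\mathbf{R}_{n}^{2}$ via Sherman--Morrison; your verified identity $F(t) = \tfrac{4\sin^{2}(\pi/(n+3))}{n+3}\,f(t)$ reconciles the two.
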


\begin{proof}
Let $n \in \mathbb{N}$, $a,b,c,d \in \mathbb{C}$ and $\lambda_{k}$, $k=1,2,\ldots,n+2$ be given by \eqref{eq:2.3}.

\medskip

\noindent (a) Consider $n$ even and $\mathbf{R}_{n}$ the matrix defined by \eqref{eq:2.4c}. We have
\begin{equation*}
\mathbf{R}_{n}^{2} = \mathbf{I}_{n} - \mathbf{x} \mathbf{x}^{\top} - \mathbf{y} \mathbf{y}^{\top}
\end{equation*}
where $\mathbf{x}$, $\mathbf{y}$ are given by \eqref{eq:2.6} and $\det(\mathbf{R}_{n}^{2}) = \frac{16}{(n+3)^{2}} \sin^{4} \left(\frac{\pi}{n + 3} \right) \neq 0$ (see \cite{Bini83}, page $106$), so that
\begin{equation*}
\mathbf{R}_{n}^{-2} = \left(\mathbf{I}_{n} - \mathbf{x} \mathbf{x}^{\top} - \mathbf{y} \mathbf{y}^{\top} \right)^{-1}.
\end{equation*}
Setting
\begin{gather*}
\boldsymbol{\Sigma}_{\frac{n}{2}} := \mathrm{diag} \left(\lambda_{3},\lambda_{5},\ldots,\lambda_{n+1} \right) + \lambda_{1} \mathbf{u} \mathbf{u}^{\top} \\
\boldsymbol{\Gamma}_{\frac{n}{2}} := \mathrm{diag} \left(\lambda_{2},\lambda_{4},\ldots,\lambda_{n} \right) + \lambda_{n+2} \mathbf{v} \mathbf{v}^{\top}
\end{gather*}
with $\mathbf{u}$, $\mathbf{v}$ given by \eqref{eq:2.4b}, we obtain
\begin{align*}
\det & (t \mathbf{I}_{n} - \mathbf{H}_{n}) = \\
&= \left[\det(\mathbf{R}_{n}) \right]^{2} \det \left(t \mathbf{R}_{n}^{-2} - \mathbf{P}_{n}^{\top} \left[
\begin{array}{cc}
  \boldsymbol{\Sigma}_{\frac{n}{2}} & \mathbf{O} \\
  \mathbf{O} & \boldsymbol{\Gamma}_{\frac{n}{2}}
\end{array}
\right] \mathbf{P}_{n} \right) \\
&= \det \left(\mathbf{R}_{n}^{2} \right) \det \left(t \mathbf{P}_{n} \mathbf{R}_{n}^{-2} \mathbf{P}_{n}^{\top} - \left[
\begin{array}{cc}
  \boldsymbol{\Sigma}_{\frac{n}{2}} & \mathbf{O} \\
  \mathbf{O} & \boldsymbol{\Gamma}_{\frac{n}{2}}
\end{array}
\right] \right) \\
&= \det \left(\mathbf{R}_{n}^{2} \right) \det \left(t \left[\mathbf{P}_{n} \left(\mathbf{I}_{n} - \mathbf{x} \mathbf{x}^{\top} - \mathbf{y} \mathbf{y}^{\top} \right) \mathbf{P}_{n}^{\top} \right]^{-1} - \left[
\begin{array}{cc}
  \boldsymbol{\Sigma}_{\frac{n}{2}} & \mathbf{O} \\
  \mathbf{O} & \boldsymbol{\Gamma}_{\frac{n}{2}}
\end{array}
\right] \right) \\
&= \det \left(\mathbf{R}_{n}^{2} \right) \det \left(t \left[
\begin{array}{cc}
  \mathbf{I}_{\frac{n}{2}} - \frac{4 \sin^{2} \left(\frac{\pi}{n+3} \right)}{n+3} \mathbf{u} \mathbf{u}^{\top} & \mathbf{O} \\
  \mathbf{O} & \mathbf{I}_{\frac{n}{2}} - \frac{4 \sin^{2} \left(\frac{\pi}{n+3} \right)}{n+3} \mathbf{v} \mathbf{v}^{\top}
\end{array}
\right]^{-1} - \left[
\begin{array}{cc}
  \boldsymbol{\Sigma}_{\frac{n}{2}} & \mathbf{O} \\
  \mathbf{O} & \boldsymbol{\Gamma}_{\frac{n}{2}}
\end{array}
\right] \right) \\
& = \det \left(\mathbf{R}_{n}^{2} \right) \det \left(t \left[
\begin{array}{cc}
  \mathbf{I}_{\frac{n}{2}} - \frac{1}{\mathbf{u}^{\top} \mathbf{u} - \frac{n+3}{4 \sin^{2} \left(\frac{\pi}{n+3} \right)}} \mathbf{u} \mathbf{u}^{\top} & \mathbf{O} \\
  \mathbf{O} & \mathbf{I}_{\frac{n}{2}} - \frac{1}{\mathbf{v}^{\top} \mathbf{v} - \frac{n+3}{4 \sin^{2} \left(\frac{\pi}{n+3} \right)}} \mathbf{v} \mathbf{v}^{\top}
\end{array}
\right] - \left[
\begin{array}{cc}
  \boldsymbol{\Sigma}_{\frac{n}{2}} & \mathbf{O} \\
  \mathbf{O} & \boldsymbol{\Gamma}_{\frac{n}{2}}
\end{array}
\right] \right) \\
&= \det \left(\mathbf{R}_{n}^{2} \right) \det \left\{t \mathbf{I}_{\frac{n}{2}} - \mathrm{diag} \left(\lambda_{3}, \lambda_{5}, \ldots, \lambda_{n+1} \right) - \left[\lambda_{1} + \frac{t}{\mathbf{u}^{\top} \mathbf{u} - \frac{n+3}{4 \sin^{2} \left(\frac{\pi}{n+3} \right)}} \right] \mathbf{u} \mathbf{u}^{\top} \right\} \cdot \\
& \hspace*{3.7cm} \det \left\{t \mathbf{I}_{\frac{n}{2}} - \mathrm{diag} \left(\lambda_{2}, \lambda_{4}, \ldots, \lambda_{n} \right) - \left[\lambda_{n+2} + \frac{t}{\mathbf{v}^{\top} \mathbf{v} - \frac{n+3}{4 \sin^{2} \left(\frac{\pi}{n+3} \right)}} \right] \mathbf{v} \mathbf{v}^{\top} \right\}
\end{align*}
(see \cite{Harville97}, page $88$ and \cite{Horn13}, page $19$) where $\mathbf{P}_{n}$ the permutation matrix \eqref{eq:2.4d}. Thus, supposing $t \neq \lambda_{2k+1}$ for each $k=1,\ldots,\frac{n}{2}$ and noting that $\mathbf{u}^{\top} \mathbf{u} - \frac{n + 3}{4 \sin^{2} \left(\frac{\pi}{n + 3} \right)} = -1$, we get
\begin{equation}\label{eq:3.3}
\begin{split}
&\det \left\{t \mathbf{I}_{\frac{n}{2}} - \mathrm{diag} \left(\lambda_{3}, \lambda_{5}, \ldots, \lambda_{n+1} \right) - \left[\lambda_{1} + \frac{t}{\mathbf{u}^{\top} \mathbf{u} - \frac{n+3}{4 \sin^{2} \left(\frac{\pi}{n+3} \right)}} \right] \mathbf{u} \mathbf{u}^{\top} \right\} = \\
&\hspace*{2.0cm} = \left\{1 - (\lambda_{1} - t) \sum_{j=1}^{\frac{n}{2}} \frac{\sin^{2} \left[\frac{(2j + 1)\pi}{n + 3} \right]}{(t - \lambda_{2j+1}) \sin^{2} \left(\frac{\pi}{n + 3} \right)} \right\} \prod_{j=1}^{\frac{n}{2}}(t - \lambda_{2j+1}) \\
&\hspace*{2.0cm} = \prod_{j=1}^{\frac{n}{2}}(t - \lambda_{2j+1}) + (t - \lambda_{1}) \sum_{j=1}^{\frac{n}{2}} \frac{\sin^{2} \left[\frac{(2j + 1)\pi}{n + 3} \right]}{\sin^{2} \left(\frac{\pi}{n + 3} \right)} \prod_{\substack{m=1 \\ m \neq j}}^{\frac{n}{2}} (t - \lambda_{2m+1}).
\end{split}
\end{equation}
Since the determinant is a continuous function, relation \eqref{eq:3.3} holds for all $t$. Analogously, we have $\mathbf{v}^{\top} \mathbf{v} - \frac{n + 3}{4 \sin^{2} \left(\frac{\pi}{n + 3} \right)} = -1$ and
\begin{align*}
& \det \left\{t \mathbf{I}_{\frac{n}{2}} - \mathrm{diag} \left(\lambda_{2}, \lambda_{4}, \ldots, \lambda_{n} \right) - \left[\lambda_{n+2} + \frac{t}{\mathbf{v}^{\top} \mathbf{v} - \frac{n+3}{4 \sin^{2} \left(\frac{\pi}{n+3} \right)}} \right] \mathbf{v} \mathbf{v}^{\top} \right\} = \\
&\hspace*{2.0cm} = \left[1 - (\lambda_{n+2} - t) \sum_{j=1}^{\frac{n}{2}} \frac{\sin^{2} \left(\frac{2j\pi}{n + 3} \right)}{(t - \lambda_{2j}) \sin^{2} \left(\frac{\pi}{n + 3} \right)} \right] \prod_{j=1}^{\frac{n}{2}}(t - \lambda_{2j}) \\
& \hspace*{2.0cm} = \prod_{j=1}^{\frac{n}{2}}(t - \lambda_{2j}) + (t - \lambda_{n+2}) \sum_{j=1}^{\frac{n}{2}} \frac{\sin^{2} \left(\frac{2j\pi}{n + 3} \right)}{\sin^{2} \left(\frac{\pi}{n + 3} \right)} \prod_{\substack{m=1 \\ m \neq j}}^{\frac{n}{2}} (t - \lambda_{2m})
\end{align*}
for all $t$, so that the characteristic polynomial of $\mathbf{H}_{n}$ is
\begin{align*}
\det (t \mathbf{I}_{n} - \mathbf{H}_{n}) & = \frac{16 \sin^{4} \left(\frac{\pi}{n + 3} \right)}{(n+3)^{2}} \left\{\prod_{\substack{j=1}}^{\frac{n}{2}} (t - \lambda_{2j+1}) + (t - \lambda_{1}) \sum_{j=1}^{\frac{n}{2}} \frac{\sin^{2} \left[\frac{(2j + 1)\pi}{n + 3} \right]}{\sin^{2} \left(\frac{\pi}{n + 3} \right)}  \cdot \prod_{\substack{\substack{m=1 \\ m \neq j}}}^{\frac{n}{2}} (t - \lambda_{2m+1}) \right\} \cdot \\
&  \\
& \hspace*{3.5cm} \left[\prod_{\substack{j=1}}^{\frac{n}{2}} (t - \lambda_{2j}) + (t - \lambda_{n+2}) \sum_{j=1}^{\frac{n}{2}} \frac{\sin^{2} \left(\frac{2j\pi}{n + 3} \right)}{\sin^{2} \left(\frac{\pi}{n + 3} \right)} \prod_{\substack{m=1 \\ m \neq j}}^{\frac{n}{2}} (t - \lambda_{2m}) \right].
\end{align*}

\medskip

\noindent (b) Supposing $n$ odd and $\mathbf{R}_{n}$ the matrix defined by \eqref{eq:2.5c}, it follows
\begin{equation*}
\mathbf{R}_{n}^{\top} \mathbf{R}_{n} = \mathbf{I}_{n} - \mathbf{z} \mathbf{z}^{\top} - \mathbf{w} \mathbf{w}^{\top}
\end{equation*}
where $\mathbf{z}$ and $\mathbf{w}$ are given in \eqref{eq:2.9}. Hence,
\begin{align*}
\det(\mathbf{R}_{n}^{\top} \mathbf{R}_{n} ) &= \det(\mathbf{I}_{n} - \mathbf{z} \mathbf{z}^{\top} - \mathbf{w} \mathbf{w}^{\top}) \\
&= (1 - \mathbf{z}^{\top} \mathbf{z}) \left[1 - \mathbf{w}^{\top} \mathbf{w} - \frac{(\mathbf{z}^{\top} \mathbf{w})^{2}}{1 - \mathbf{z}^{\top} \mathbf{z}} \right] \\
&= \frac{16}{(n+3)^{2}} \sin^{2} \left(\frac{\pi}{n + 3} \right) \sin^{2} \left[\frac{(n + 1)\pi}{n + 3} \right] \neq 0
\end{align*}
(see \cite{Miller81}, page $70$) and
\begin{equation*}
\left(\mathbf{R}_{n}^{\top} \mathbf{R}_{n} \right)^{-1} = \left(\mathbf{I}_{n} - \mathbf{z} \mathbf{z}^{\top} - \mathbf{w} \mathbf{w}^{\top} \right)^{-1}.
\end{equation*}
Putting
\begin{gather*}
\boldsymbol{\Sigma}_{\frac{n+1}{2}} := \mathrm{diag} \left(\lambda_{3},\lambda_{5},\ldots,\lambda_{n}, \lambda_{n+2} \right) + \lambda_{1} \mathbf{u} \mathbf{u}^{\top} \\
\boldsymbol{\Gamma}_{\frac{n-1}{2}} := \mathrm{diag} \left(\lambda_{2},\lambda_{4},\ldots,\lambda_{n-1} \right) + \lambda_{n+1} \mathbf{v} \mathbf{v}^{\top}
\end{gather*}
with $\mathbf{u}$, $\mathbf{v}$ given by \eqref{eq:2.5b}, we obtain
\begin{align*}
& \det (t \mathbf{I}_{n} - \mathbf{H}_{n}) = \\
& \; = \det(\mathbf{R}_{n}) \det \left(t \mathbf{R}_{n}^{-1} (\mathbf{R}_{n}^{\top})^{-1} - \mathbf{P}_{n} \left[
\begin{array}{cc}
  \boldsymbol{\Sigma}_{\frac{n+1}{2}} & \mathbf{O} \\
  \mathbf{O} & \boldsymbol{\Gamma}_{\frac{n-1}{2}}
\end{array}
\right] \mathbf{P}_{n}^{\top} \right) \det(\mathbf{R}_{n}^{\top}) \\
& \; = \det \left(\mathbf{R}_{n}^{\top} \mathbf{R}_{n}\right) \det \left(t \mathbf{P}_{n}^{\top} (\mathbf{R}_{n}^{\top}\mathbf{R}_{n})^{-1} \mathbf{P}_{n} - \left[
\begin{array}{cc}
  \boldsymbol{\Sigma}_{\frac{n+1}{2}} & \mathbf{O} \\
  \mathbf{O} & \boldsymbol{\Gamma}_{\frac{n-1}{2}}
\end{array}
\right] \right) \\
& \; = \det \left(\mathbf{R}_{n}^{\top} \mathbf{R}_{n}\right) \det \left(t \left[\mathbf{P}_{n}^{\top} \left(\mathbf{I}_{n} - \mathbf{z} \mathbf{z}^{\top} - \mathbf{w} \mathbf{w}^{\top} \right) \mathbf{P}_{n} \right]^{-1} - \left[
\begin{array}{cc}
  \boldsymbol{\Sigma}_{\frac{n+1}{2}} & \mathbf{O} \\
  \mathbf{O} & \boldsymbol{\Gamma}_{\frac{n-1}{2}}
\end{array}
\right] \right) \\
& \; = \det \left(\mathbf{R}_{n}^{\top} \mathbf{R}_{n}\right) \det \left(t \left[
\begin{array}{cc}
  \mathbf{I}_{\frac{n+1}{2}} - \frac{4 \sin^{2} \left(\frac{\pi}{n+3} \right)}{n+3} \mathbf{u} \mathbf{u}^{\top} & \mathbf{O} \\
  \mathbf{O} & \mathbf{I}_{\frac{n-1}{2}} - \frac{4 \sin^{2} \left[\frac{(n+1)\pi}{n+3} \right]}{n+3} \mathbf{v} \mathbf{v}^{\top}
\end{array}
\right]^{-1} - \left[
\begin{array}{cc}
  \boldsymbol{\Sigma}_{\frac{n+1}{2}} & \mathbf{O} \\
  \mathbf{O} & \boldsymbol{\Gamma}_{\frac{n-1}{2}}
\end{array}
\right] \right) \\
& \; = \det \left(\mathbf{R}_{n}^{\top} \mathbf{R}_{n}\right) \cdot \\
& \hspace*{1.2cm} \det \left(t \left[
\begin{array}{cc}
  \mathbf{I}_{\frac{n+1}{2}} - \frac{1}{\mathbf{u}^{\top} \mathbf{u} - \frac{n+3}{4 \sin^{2} \left(\frac{\pi}{n+3} \right)}} \mathbf{u} \mathbf{u}^{\top} & \mathbf{O} \\
  \mathbf{O} & \mathbf{I}_{\frac{n-1}{2}} - \frac{1}{\mathbf{v}^{\top} \mathbf{v} - \frac{n+3}{4 \sin^{2} \left[\frac{(n+1)\pi}{n+3} \right]}} \mathbf{v} \mathbf{v}^{\top}
\end{array}
\right] - \left[
\begin{array}{cc}
  \boldsymbol{\Sigma}_{\frac{n+1}{2}} & \mathbf{O} \\
  \mathbf{O} & \boldsymbol{\Gamma}_{\frac{n-1}{2}}
\end{array}
\right] \right) \\
& \; = \det \left(\mathbf{R}_{n}^{\top} \mathbf{R}_{n}\right) \det \left\{t \mathbf{I}_{\frac{n+1}{2}} - \mathrm{diag} \left(\lambda_{3}, \lambda_{5}, \ldots, \lambda_{n},\lambda_{n+2} \right) - \left[\lambda_{1} + \frac{t}{\mathbf{u}^{\top} \mathbf{u} - \frac{n+3}{4 \sin^{2} \left(\frac{\pi}{n+3} \right)}} \right] \mathbf{u} \mathbf{u}^{\top} \right\} \cdot \\
& \hspace*{3.8cm} \det \left\{t \mathbf{I}_{\frac{n-1}{2}} - \mathrm{diag} \left(\lambda_{2}, \lambda_{4}, \ldots, \lambda_{n-1} \right) - \left[\lambda_{n+1} + \frac{t}{\mathbf{v}^{\top} \mathbf{v} - \frac{n+3}{4 \sin^{2} \left[\frac{(n+1)\pi}{n+3} \right]}} \right] \mathbf{v} \mathbf{v}^{\top} \right\}
\end{align*}
(see \cite{Harville97}, page $88$ and \cite{Horn13}, page $19$) where $\mathbf{P}_{n}$ the permutation matrix \eqref{eq:2.5d}. Thus, admitting $t \neq \lambda_{2k+1}$ for each $k=1,\ldots,\frac{n+1}{2}$, we have
\begin{equation}\label{eq:3.4}
\begin{split}
&\det \left\{t \mathbf{I}_{\frac{n+1}{2}} - \mathrm{diag} \left(\lambda_{3}, \lambda_{5}, \ldots, \lambda_{n},\lambda_{n+2} \right) - \left[\lambda_{1} + \frac{t}{\mathbf{u}^{\top} \mathbf{u} - \frac{n+3}{4 \sin^{2} \left(\frac{\pi}{n+3} \right)}} \right] \mathbf{u} \mathbf{u}^{\top} \right\} = \\
&\qquad = \left\{1 - (\lambda_{1} - t) \left[\sum_{j=1}^{\frac{n-1}{2}} \frac{\sin^{2} \left(\frac{(2j + 1)\pi}{n + 3} \right)}{(t - \lambda_{2j+1}) \sin^{2} \left(\frac{\pi}{n + 3} \right)} + \frac{1}{t - \lambda_{n+2}} \right] \right\} (t - \lambda_{n+2}) \prod_{j=1}^{\frac{n-1}{2}}(t - \lambda_{2j+1}) \\
&\qquad = \left\{1 - (\lambda_{1} - t) \sum_{j=1}^{\frac{n+1}{2}} \frac{\sin^{2} \left[\frac{(2j + 1)\pi}{n + 3} \right]}{(t - \lambda_{2j+1}) \sin^{2} \left(\frac{\pi}{n + 3} \right)} \right\} \prod_{j=1}^{\frac{n+1}{2}}(t - \lambda_{2j+1}) \\
&\qquad = \prod_{j=1}^{\frac{n+1}{2}} (t - \lambda_{2j+1}) + (t - \lambda_{1}) \sum_{j=1}^{\frac{n+1}{2}} \frac{\sin^{2} \left[\frac{(2j + 1)\pi}{n + 3} \right]}{\sin^{2} \left(\frac{\pi}{n + 3} \right)} \prod_{\substack{m=1 \\ m \neq j}}^{\frac{n+1}{2}}(t - \lambda_{2m+1})
\end{split}
\end{equation}
provided that $\mathbf{u}^{\top} \mathbf{u} - \frac{n + 3}{4 \sin^{2} \left(\frac{\pi}{n + 3} \right)} = -1$. Since the determinant is a continuous function, relation \eqref{eq:3.4} is valid for every $t$. Analogously, we have $\mathbf{v}^{\top} \mathbf{v} - \frac{n + 3}{4 \sin^{2} \left[\frac{(n + 1)\pi}{n + 3} \right]} = -1$ and
\begin{align*}
& \det \left\{t \mathbf{I}_{\frac{n-1}{2}} - \mathrm{diag} \left(\lambda_{2}, \lambda_{4}, \ldots, \lambda_{n-1} \right) - \left[\lambda_{n+1} + \frac{t}{\mathbf{v}^{\top} \mathbf{v} - \frac{n+3}{4 \sin^{2} \left(\frac{(n+1)\pi}{n+3} \right)}} \right] \mathbf{v} \mathbf{v}^{\top} \right\} = \\
& \hspace*{5.0cm} = \prod_{j=1}^{\frac{n-1}{2}}(t - \lambda_{2j}) + (t - \lambda_{n+1}) \sum_{j=1}^{\frac{n-1}{2}} \frac{\sin^{2} \left(\frac{2j\pi}{n + 3} \right)}{\sin^{2} \left[\frac{(n+1)\pi}{n + 3} \right]} \prod_{\substack{m=1 \\ m \neq j}}^{\frac{n-1}{2}} (t - \lambda_{2m})
\end{align*}
for all $t$, whence the characteristic polynomial of $\mathbf{H}_{n}$ is
\begin{align*}
\det (t \mathbf{I}_{n} - \mathbf{H}_{n}) & = \frac{16}{(n+3)^{2}} \sin^{2} \left(\frac{\pi}{n + 3} \right) \sin^{2} \left[\frac{(n + 1)\pi}{n + 3} \right] \Bigg\{(t - \lambda_{n+2}) \prod_{j=1}^{\frac{n-1}{2}}(t - \lambda_{2j+1}) + \Bigg. \\
& \hspace*{1.65cm} \Bigg. (t - \lambda_{1})\Bigg[\sum_{j=1}^{\frac{n-1}{2}} \frac{(\lambda_{n+2} - t) \sin^{2} \left(\frac{(2j + 1)\pi}{n + 3} \right)}{\sin^{2} \left(\frac{\pi}{n + 3} \right)} \prod_{\substack{m=1 \\ m \neq j}}^{\frac{n-1}{2}}(t - \lambda_{2j+1}) + \prod_{j=1}^{\frac{n-1}{2}}(t - \lambda_{2j+1}) \Bigg] \Bigg\} \cdot \\
& \hspace*{3.7cm} \left\{\prod_{j=1}^{\frac{n-1}{2}}(t - \lambda_{2j}) - (\lambda_{n+1} - t) \sum_{j=1}^{\frac{n-1}{2}} \frac{\sin^{2} \left(\frac{2j\pi}{n + 3} \right)}{\sin^{2} \left[\frac{(n+1)\pi}{n + 3} \right]} \prod_{\substack{m=1 \\ m \neq j}}^{\frac{n-1}{2}} (t - \lambda_{2m}) \right\}.
\end{align*}
The proof is completed.
\end{proof}

The previous theorem leads us to the following separation result for the eigenvalues of real matrices $\mathbf{H}_{n}$.

\begin{cor}\label{cor1}
Let $n \in \mathbb{N}$, $a,b,c,d \in \mathbb{R}$, $\lambda_{k}$, $k=1,2,\ldots,n+2$ be given by \eqref{eq:2.3} and $\mathbf{H}_{n}$ the $n \times n$ matrix \eqref{eq:1.1}.

\medskip

\noindent \textnormal{(a)} If $n$ is even and:

\begin{subequations}
\begin{itemize}
\item[\textnormal{i.}] $\lambda_{\tau(1)} \leqslant \lambda_{\tau(3)} \leqslant \ldots \leqslant \lambda_{\tau(n+1)}$ are arranged in nondecreasing order by some bijection $\tau$ then
\begin{equation}\label{eq:3.5a}
\lambda_{\tau(2k-1)} \leqslant \mu_{k} \leqslant \lambda_{\tau(2k+1)}
\end{equation}
for any $k=1,\ldots,\frac{n}{2}$, where $\mu_{1},\ldots,\mu_{\frac{n}{2}}$ are the zeros of \eqref{eq:3.1a}. Moreover, if $\lambda_{\tau(2k-1)}$ are all distinct then the $k$th inequalities \eqref{eq:3.5a} are strict, and $\lambda_{\tau(2k-1)} = \lambda_{\tau(2k+1)}$ if and only if $\lambda_{\tau(2k-1)} = \mu_{k} = \lambda_{\tau(2k+1)}$.

\item[\textnormal{ii.}] $\lambda_{\sigma(2)} \leqslant \lambda_{\sigma(4)} \leqslant \ldots \leqslant \lambda_{\sigma(n+2)}$ are arranged in nondecreasing order by some bijection $\sigma$ then
\begin{equation}\label{eq:3.5b}
\lambda_{\sigma(2k)} \leqslant \nu_{k} \leqslant \lambda_{\sigma(2k+2)}
\end{equation}
for all $k =1,\ldots,\frac{n}{2}$, where $\nu_{1},\ldots,\nu_{\frac{n}{2}}$ are the zeros of \eqref{eq:3.1b}. Furthermore, if $\lambda_{\sigma(2k)}$ are all distinct then the $k$th inequalities \eqref{eq:3.5b} are strict, and $\lambda_{\sigma(2k)} = \lambda_{\sigma(2k+2)}$ if and only if $\lambda_{\sigma(2k)} = \nu_{k} = \lambda_{\sigma(2k+2)}$.
\end{itemize}
\end{subequations}

\medskip

\noindent \textnormal{(b)} If $n$ is odd and:

\begin{subequations}
\begin{itemize}
\item[\textnormal{i.}] $\lambda_{\tau(1)} \leqslant \lambda_{\tau(3)} \leqslant \ldots \leqslant \lambda_{\tau(n)} \leqslant \lambda_{\tau(n+2)}$ are arranged in nondecreasing order by some bijection $\tau$ then
\begin{equation}\label{eq:3.6a}
\lambda_{\tau(2k-1)} \leqslant \mu_{k} \leqslant \lambda_{\tau(2k+1)}
\end{equation}
for every $k=1,\ldots,\frac{n+1}{2}$, where $\mu_{1},\ldots,\mu_{\frac{n+1}{2}}$ are the zeros of \eqref{eq:3.2a}. Moreover, if $\lambda_{\tau(2k-1)}$ are all distinct then the $k$th inequalities \eqref{eq:3.5a} are strict, and $\lambda_{\tau(2k-1)} = \lambda_{\tau(2k+1)}$ if and only if $\lambda_{\tau(2k-1)} = \mu_{k} = \lambda_{\tau(2k+1)}$.

\item[\textnormal{ii.}] $\lambda_{\sigma(2)} \leqslant \lambda_{\sigma(4)} \leqslant \ldots \leqslant \lambda_{\sigma(n+1)}$ are arranged in nondecreasing order by some bijection $\sigma$ then
\begin{equation}\label{eq:3.6b}
\lambda_{\sigma(2k)} \leqslant \nu_{k} \leqslant \lambda_{\sigma(2k+2)}
\end{equation}
for each $k=1,\ldots,\frac{n-1}{2}$, where $\nu_{1},\ldots,\nu_{\frac{n-1}{2}}$ are the zeros of \eqref{eq:3.2b}. Furthermore, if $\lambda_{\sigma(2k)}$ are all distinct then the $k$th inequalities \eqref{eq:3.6b} are strict, and $\lambda_{\sigma(2k)} = \lambda_{\sigma(2k+2)}$ if and only if $\lambda_{\sigma(2k)} = \nu_{k} = \lambda_{\sigma(2k+2)}$.
\end{itemize}
\end{subequations}
\end{cor}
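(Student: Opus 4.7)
The plan is to reduce each of the equations $f(t)=0$ and $g(t)=0$ in \eqref{eq:3.1a}--\eqref{eq:3.2b} to a secular equation whose zeros can be located by an elementary monotonicity argument. The four parts are entirely parallel, so I would carry out case (a)(i) in detail; the others require only the obvious substitutions, most notably that in part (b)(ii) the denominator $\sin^{2}[\pi/(n+3)]$ is replaced by $\sin^{2}[(n+1)\pi/(n+3)]$.

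Setting $c_{j}:=\sin^{2}[(2j+1)\pi/(n+3)]/\sin^{2}[\pi/(n+3)]>0$, the polynomial $f$ in \eqref{eq:3.1a} has degree exactly $n/2$ with positive leading coefficient $1+\sum_{j}c_{j}$, hence $n/2$ zeros in $\mathbb{C}$ counted with multiplicity. Assuming first that $\lambda_{1},\lambda_{3},\ldots,\lambda_{n+1}$ are pairwise distinct, dividing $f(t)$ by $(t-\lambda_{1})\prod_{j}(t-\lambda_{2j+1})$ shows that, off the poles, $f(t)=0$ is equivalent to
\begin{equation*}
G(t):=\frac{1}{t-\lambda_{1}}+\sum_{j=1}^{n/2}\frac{c_{j}}{t-\lambda_{2j+1}}=0.
\end{equation*}
All residues of $G$ are strictly positive, so $G'(t)<0$ throughout its domain and $G$ is strictly decreasing on each maximal open subinterval. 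On each bounded gap $(\lambda_{\tau(2k-1)},\lambda_{\tau(2k+1)})$ between consecutive sorted poles, $G$ sweeps from $+\infty$ to $-\infty$, contributing exactly one zero $\mu_{k}$; on the two outer unbounded tails, the boundary values $G(\pm\infty)=0^{\pm}$ combined with monotonicity force $G$ to keep a constant sign, so no zeros escape the central band. This produces $n/2$ strictly interlaced real zeros, which exhaust the zero set of $f$ since $\deg f=n/2$.

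For the general case in which some $\lambda_{\tau(2k-1)}$ may coincide, I would perturb $(a,b,c,d)$ slightly so that all $\lambda_{k}$'s become distinct, apply the strict interlacing just proved, and pass to the limit using continuity of the zeros of a monic polynomial in its coefficients. The equivalence ``$\lambda_{\tau(2k-1)}=\lambda_{\tau(2k+1)}$ iff $\lambda_{\tau(2k-1)}=\mu_{k}=\lambda_{\tau(2k+1)}$'' then falls out: direct substitution shows that if $\lambda_{2j+1}=\lambda_{2m+1}$ with $j\neq m$, then $f$ vanishes at the common value (each summand contains at least one vanishing factor), while the converse implication follows from strict interlacing for the perturbed problems. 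The main obstacle is not the algebra, which is standard, but the bookkeeping in this degeneracy step: the sorting bijection $\tau$ is no longer unique when equalities occur, so the perturbation must be chosen to realize the prescribed $\tau$ in the limit. Once this is arranged, the remaining parts (a)(ii), (b)(i), (b)(ii) follow by the same pattern, with $\lambda_{1}$ replaced by $\lambda_{n+2}$ or $\lambda_{n+1}$ and the corresponding even-index poles.
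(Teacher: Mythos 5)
Your core argument is correct, and it is essentially the classical secular-equation version of the interlacing argument; the paper reaches the same conclusion by a closely related but technically different route. Where you divide $f$ by $(t-\lambda_{1})\prod_{j}(t-\lambda_{2j+1})$ and exploit the strict monotonicity of $G(t)=\tfrac{1}{t-\lambda_{1}}+\sum_{j}\tfrac{c_{j}}{t-\lambda_{2j+1}}$ between consecutive poles, the paper instead rewrites $f$ with the nodes sorted by $\tau$ (singling out the index $\xi$ with $\tau(2\xi-1)=1$) and evaluates $f$ directly at the nodes, obtaining $f\left[\lambda_{\tau(2k-1)}\right]f\left[\lambda_{\tau(2k+1)}\right]\leqslant 0$ for every $k$ and concluding by the intermediate value theorem; strictness under the distinctness hypothesis is then a short contradiction argument. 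The payoff of the paper's choice is that the sign condition is non-strict and hence valid verbatim when nodes coincide, so the inequalities \eqref{eq:3.5a} come out in one stroke with no perturbation. Your route buys a more transparent proof of strict interlacing (exactly one zero per gap, none outside), but it forces you to treat the degenerate case separately.

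That degenerate case is where your proposal is incomplete. Two points are asserted rather than proved. First, the existence of a perturbation of $(a,b,c,d)$ separating all the $\lambda_{k}$ is not automatic: the map $(a,b,c,d)\mapsto(\lambda_{1},\ldots,\lambda_{n+2})$ is linear with only a four-dimensional domain, so you must check that no coincidence $\lambda_{i}=\lambda_{j}$, $i\neq j$, holds identically; this is true because the coefficient vectors $\bigl(\cos\tfrac{ni\pi}{n+3},\cos\tfrac{(n+1)i\pi}{n+3},\cos\tfrac{(n+2)i\pi}{n+3},\cos i\pi\bigr)$ are pairwise distinct for $i=1,\ldots,n+2$ (use $\cos\tfrac{(n+3-m)i\pi}{n+3}=(-1)^{i}\cos\tfrac{mi\pi}{n+3}$ and the injectivity of $\cos$ on $(0,\pi)$), but it has to be said. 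Second, you yourself flag the labelling problem in the limit --- matching the perturbed sorted nodes and their interlaced zeros to the prescribed bijection $\tau$ --- and leave it at ``once this is arranged''. This can be repaired, but as written the general-case inequality \eqref{eq:3.5a} is not fully established. Note that your own observation that $f$ vanishes at any repeated node, combined with a count of the zeros of $G$ on the remaining gaps, would let you dispense with the perturbation altogether and would bring you essentially back to the paper's argument.
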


\begin{proof}
Since all assertions can be proven in the same way, we only prove i. of (a). Let $n \in \mathbb{N}$ be even and consider $\tau(2\xi - 1) = 1$ for some $1 \leqslant \xi \leqslant \frac{n}{2} + 1$. According to Theorem~\ref{thr1}, we have
\begin{align*}
f(t) &= \prod_{\substack{j=1}}^{\frac{n}{2}} (t - \lambda_{2j+1}) + (t - \lambda_{1}) \sum_{j=1}^{\frac{n}{2}} \frac{\sin^{2} \left[\frac{(2j + 1)\pi}{n + 3} \right]}{\sin^{2} \left(\frac{\pi}{n + 3} \right)} \prod_{\substack{\substack{m=1 \\ m \neq j}}}^{\frac{n}{2}} (t - \lambda_{2m+1}) \\
&= \prod_{\substack{j=1 \\ j \neq \xi}}^{\frac{n}{2} + 1} \left[t - \lambda_{\tau(2j-1)} \right] + \left[t - \lambda_{\tau(2\xi - 1)} \right] \sum_{\substack{j=1 \\ j \neq \xi}}^{\frac{n}{2} + 1} \frac{\sin^{2} \left[\frac{\tau(2j - 1)\pi}{n + 3} \right]}{\sin^{2} \left(\frac{\pi}{n + 3} \right)} \prod_{\substack{\substack{m=1 \\ m \neq \xi, j}}}^{\frac{n}{2} + 1} \left[t - \lambda_{\tau(2m-1)} \right]
\end{align*}
so that $f\left[\lambda_{\tau(2k-1)} \right] f\left[\lambda_{\tau(2k+1)} \right] \leqslant 0$ for all $k=1,\ldots,\frac{n}{2}$. Hence, $\lambda_{\tau(2k-1)} \leqslant \mu_{k} \leqslant \lambda_{\tau(2k+1)}$. If $\lambda_{\tau(2k-1)}$ are all distinct then $\lambda_{\tau(2k-1)} < \mu_{k} \leqslant \lambda_{\tau(2k+1)}$ or $\lambda_{\tau(2k-1)} \leqslant \mu_{k} < \lambda_{\tau(2k+1)}$. In the first case, if $\mu_{k} = \lambda_{\tau(2k+1)}$ then $f \left[\lambda_{\tau(2k+1)} \right] = 0$, implying $\lambda_{\tau(2k+1)} = \lambda_{\tau(2\zeta - 1)}$ for some $\zeta \neq k + 1$ which contradicts the hypothesis; the case $\lambda_{\tau(2k-1)} \leqslant \mu_{k} < \lambda_{\tau(2k+1)}$ is analogous. Hence, the $k$th inequalities \eqref{eq:3.5a} are strict.
\end{proof}

\begin{rem}
Let us observe that, in the assumptions of Corollary~\ref{cor1}, if $\lambda_{\tau(1)} > 0$ and $\lambda_{\sigma(2)} > 0$ then $\mathbf{H}_{n}$ is positive definite.
\end{rem}

\subsection{Eigenvectors}

\indent

From Lemma~\ref{lem3} we can get also an explicit representation for the eigenvectors of $\mathbf{H}_{n}$. 

\begin{thr}\label{thr2}
Let $n \in \mathbb{N}$, $a,b,c,d \in \mathbb{R}$, $\lambda_{k}$, $k=1,2,\ldots,n+2$ be given by \eqref{eq:2.3} and $\mathbf{H}_{n}$ the $n \times n$ matrix \eqref{eq:1.1}.

\medskip

\begin{subequations}
\noindent \textnormal{(a)} If $n$ is even, $\mathbf{u}, \mathbf{v}$ are given in \eqref{eq:2.4b}, $\mathbf{R}_{n}$ is the $n \times n$ matrix \eqref{eq:2.4c}, $\mathbf{P}_{n}$ is the $n \times n$ permutation matrix \eqref{eq:2.4d},
\begin{itemize}
\item[\textnormal{i.}] $\mu_{1}, \ldots, \mu_{\frac{n}{2}}$ are the zeros of \eqref{eq:3.1a} and $\lambda_{1}, \lambda_{3}, \ldots, \lambda_{n+1}$ are all distinct then
\begin{equation}\label{eq:3.7a}
\mathbf{R}_{n} \mathbf{P}_{n}^{\top} \left[
    \begin{array}{cc}
      \mathbf{I}_{\frac{n}{2}} + \mathbf{u} \mathbf{u}^{\top} & \mathbf{O} \\
      \mathbf{O} & \mathbf{I}_{\frac{n}{2}} + \mathbf{v} \mathbf{v}^{\top}
    \end{array}
    \right]
    \left[
    \begin{array}{c}
      \frac{\sin\left(\frac{3\pi}{n+3} \right)}{(\mu_{k} - \lambda_{3}) \sin \left(\frac{\pi}{n+3} \right)} \\[8pt]
      \frac{\sin\left(\frac{5\pi}{n+3} \right)}{(\mu_{k} - \lambda_{5}) \sin \left(\frac{\pi}{n+3} \right)} \\
      \vdots \\[2pt]
      \frac{\sin\left[\frac{(n+1)\pi}{n+3} \right]}{(\mu_{k} - \lambda_{n+1}) \sin \left(\frac{\pi}{n+3} \right)} \\[8pt]
      0 \\
      \vdots \\[2pt]
      0
    \end{array}
    \right]
\end{equation}
is an eigenvector of $\mathbf{H}_{n}$ associated to $\mu_{k}$, $k=1,\ldots, \frac{n}{2}$.

\item[\textnormal{ii.}] $\nu_{1}, \ldots, \nu_{\frac{n}{2}}$ are the zeros of \eqref{eq:3.1b} and $\lambda_{2}, \lambda_{4}, \ldots, \lambda_{n+2}$ are all distinct then
\begin{equation}\label{eq:3.7b}
\mathbf{R}_{n} \mathbf{P}_{n}^{\top} \left[
    \begin{array}{cc}
      \mathbf{I}_{\frac{n}{2}} + \mathbf{u} \mathbf{u}^{\top} & \mathbf{O} \\
      \mathbf{O} & \mathbf{I}_{\frac{n}{2}} + \mathbf{v} \mathbf{v}^{\top}
    \end{array}
    \right] \left[
    \begin{array}{c}
      0 \\
      \vdots \\[2pt]
      0 \\[4pt]
      \frac{\sin\left(\frac{2\pi}{n+3} \right)}{(\nu_{k} - \lambda_{2}) \sin \left(\frac{\pi}{n+3} \right)} \\[8pt]
      \frac{\sin\left(\frac{4\pi}{n+3} \right)}{(\nu_{k} - \lambda_{4}) \sin \left(\frac{\pi}{n+3} \right)} \\
      \vdots \\[2pt]
      \frac{\sin\left(\frac{n\pi}{n+3} \right)}{(\nu_{k} - \lambda_{n}) \sin \left(\frac{\pi}{n+3} \right)}
    \end{array}
    \right]
\end{equation}
is an eigenvector of $\mathbf{H}_{n}$ associated to $\nu_{k}$, $k=1,\ldots, \frac{n}{2}$.
\end{itemize}
\end{subequations}

\medskip

\begin{subequations}
\noindent \textnormal{(b)} If $n$ is odd, $\mathbf{u}, \mathbf{v}$ are given in \eqref{eq:2.5b}, $\mathbf{R}_{n}$ is the $n \times n$ matrix \eqref{eq:2.5c}, $\mathbf{P}_{n}$ is the $n \times n$ permutation matrix \eqref{eq:2.5d},
\begin{itemize}
\item[\textnormal{i.}] $\mu_{1}, \ldots, \mu_{\frac{n+1}{2}}$ are the zeros of \eqref{eq:3.1a} and $\lambda_{1}, \lambda_{3}, \ldots, \lambda_{n}, \lambda_{n+2}$ are all distinct then

\begin{equation}\label{eq:3.8a}
\mathbf{R}_{n} \mathbf{P}_{n} \left[
    \begin{array}{cc}
      \mathbf{I}_{\frac{n+1}{2}} + \mathbf{u} \mathbf{u}^{\top} & \mathbf{O} \\
      \mathbf{O} & \mathbf{I}_{\frac{n-1}{2}} + \mathbf{v} \mathbf{v}^{\top}
    \end{array}
    \right] \left[
    \begin{array}{c}
      \frac{\sin\left(\frac{3\pi}{n+3} \right)}{(\mu_{k} - \lambda_{3}) \sin \left(\frac{\pi}{n+3} \right)} \\[8pt]
      \frac{\sin\left(\frac{5\pi}{n+3} \right)}{(\mu_{k} - \lambda_{5}) \sin \left(\frac{\pi}{n+3} \right)} \\
      \vdots \\[2pt]
      \frac{\sin\left[\frac{(n+1)\pi}{n+3} \right]}{(\mu_{k} - \lambda_{n}) \sin \left(\frac{\pi}{n+3} \right)} \\[8pt]
      \frac{1}{\mu_{k} - \lambda_{n+2}} \\[5pt]
      0 \\
      \vdots \\[2pt]
      0
    \end{array}
    \right]
\end{equation}
is an eigenvector of $\mathbf{H}_{n}$ associated to $\mu_{k}$, $k=1,\ldots, \frac{n+1}{2}$.

\item[\textnormal{ii.}] $\nu_{1}, \ldots, \nu_{\frac{n-1}{2}}$ are the zeros of \eqref{eq:3.1b} and $\lambda_{2}, \lambda_{4}, \ldots, \lambda_{n+1}$ are all distinct then
\begin{equation}\label{eq:3.8b}
\mathbf{R}_{n} \mathbf{P}_{n} \left[
    \begin{array}{cc}
      \mathbf{I}_{\frac{n+1}{2}} + \mathbf{u} \mathbf{u}^{\top} & \mathbf{O} \\
      \mathbf{O} & \mathbf{I}_{\frac{n-1}{2}} + \mathbf{v} \mathbf{v}^{\top}
    \end{array}
    \right] \left[
    \begin{array}{c}
      0 \\
      \vdots \\[2pt]
      0 \\[3pt]
      \frac{\sin\left(\frac{2\pi}{n+3} \right)}{(\nu_{k} - \lambda_{2}) \sin \left[\frac{(n+1)\pi}{n+3} \right]} \\[8pt]
      \frac{\sin\left(\frac{4\pi}{n+3} \right)}{(\nu_{k} - \lambda_{4}) \sin \left[\frac{(n+1)\pi}{n+3} \right]} \\
      \vdots \\[2pt]
      \frac{\sin\left[\frac{(n-1)\pi}{n+3} \right]}{(\nu_{k} - \lambda_{n-1}) \sin \left[\frac{(n+1)\pi}{n+3} \right]}
    \end{array}
    \right]
\end{equation}
is an eigenvector of $\mathbf{H}_{n}$ associated to $\nu_{k}$, $k=1,\ldots, \frac{n-1}{2}$.
\end{itemize}
\end{subequations}
\end{thr}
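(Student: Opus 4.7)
The plan is to apply Lemma~\ref{lem3} and verify the claimed eigenvector formulas by direct substitution, reducing the eigenvalue equation to the secular equations from Theorem~\ref{thr1}.

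For part (a).i with $n$ even, Lemma~\ref{lem3}(a) provides
$$\mathbf{H}_{n} = \mathbf{R}_{n}\mathbf{P}_{n}^{\top}\,\mathbf{D}\,\mathbf{P}_{n}\mathbf{R}_{n},\qquad \mathbf{D} = \begin{bmatrix} \boldsymbol{\Sigma}_{n/2} & \mathbf{O}\\ \mathbf{O} & \boldsymbol{\Gamma}_{n/2}\end{bmatrix},$$
with $\boldsymbol{\Sigma}_{n/2} = \mathrm{diag}(\lambda_{3},\ldots,\lambda_{n+1}) + \lambda_{1}\mathbf{u}\mathbf{u}^{\top}$ and $\boldsymbol{\Gamma}_{n/2}$ analogous. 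The second crucial ingredient, already sitting inside the proof of Theorem~\ref{thr1}(a), is that $\mathbf{P}_{n}\mathbf{R}_{n}^{2}\mathbf{P}_{n}^{\top}$ coincides with the block-diagonal matrix $\mathbf{M}$ appearing in~\eqref{eq:3.7a}; this identity is a direct consequence of the normalisations $\|\mathbf{u}\|^{2} - (n+3)/(4\sin^{2}(\pi/(n+3))) = -1$ and the analogous relation for $\mathbf{v}$.

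Denoting by $\mathbf{w}_{k}$ the candidate vector in~\eqref{eq:3.7a} and writing $\tilde{\mathbf{p}}_{k}^{\top}=(p_{k,1},\ldots,p_{k,n/2},0,\ldots,0)$ with $p_{k,j}=u_{j}/(\mu_{k}-\lambda_{2j+1})$, these two identities give at once
$$\mathbf{H}_{n}\mathbf{w}_{k} = \mathbf{R}_{n}\mathbf{P}_{n}^{\top}\,\mathbf{D}\,(\mathbf{P}_{n}\mathbf{R}_{n}^{2}\mathbf{P}_{n}^{\top})\,\mathbf{M}\tilde{\mathbf{p}}_{k} = \mathbf{R}_{n}\mathbf{P}_{n}^{\top}\,\mathbf{D}\mathbf{M}^{2}\,\tilde{\mathbf{p}}_{k}.$$
Since $\mathbf{R}_{n}\mathbf{P}_{n}^{\top}$ is invertible, proving $\mathbf{H}_{n}\mathbf{w}_{k}=\mu_{k}\mathbf{w}_{k}$ reduces to the block-diagonal identity $\mathbf{D}\mathbf{M}^{2}\tilde{\mathbf{p}}_{k} = \mu_{k}\mathbf{M}\tilde{\mathbf{p}}_{k}$, whose lower block is trivially zero and whose upper block involves only $\boldsymbol{\Sigma}_{n/2}$, the rank-one perturbation $\mathbf{I}+\mathbf{u}\mathbf{u}^{\top}$, and $\mathbf{p}_{k}$.

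To discharge that upper-block identity I would expand using $(\mathbf{I}+\mathbf{u}\mathbf{u}^{\top})^{2} = \mathbf{I} + (2+\|\mathbf{u}\|^{2})\mathbf{u}\mathbf{u}^{\top}$ and set $s = \mathbf{u}^{\top}\mathbf{p}_{k}$. Upon equating the $j$th components and using the defining relation $(\lambda_{2j+1}-\mu_{k})p_{k,j}=-u_{j}$, the $j$-dependent part disentangles, leaving a scalar equation in $s$, $\|\mathbf{u}\|^{2}$, $\mu_{k}$ and $\lambda_{1}$. That scalar equation is then identified, via the explicit value of $\|\mathbf{u}\|^{2}$, with the secular relation $(\mu_{k}-\lambda_{1})s=-1$ coming from $f(\mu_{k})=0$ in Theorem~\ref{thr1}(a). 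Item (a).ii is identical mutatis mutandis, swapping $\mathbf{u}\leftrightarrow\mathbf{v}$, $\lambda_{1}\leftrightarrow\lambda_{n+2}$, $\mu_{k}\leftrightarrow\nu_{k}$ and $f\leftrightarrow g$.

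For part (b), Lemma~\ref{lem3}(b) gives $\mathbf{H}_{n} = \mathbf{R}_{n}\mathbf{P}_{n}\mathbf{D}\mathbf{P}_{n}^{\top}\mathbf{R}_{n}^{\top}$ with $\mathbf{R}_{n}$ no longer symmetric, and the analogous identity is $\mathbf{P}_{n}^{\top}\mathbf{R}_{n}^{\top}\mathbf{R}_{n}\mathbf{P}_{n}=\mathbf{M}$ for the odd-$n$ vectors from~\eqref{eq:2.5b}. The same substitution yields $\mathbf{H}_{n}(\mathbf{R}_{n}\mathbf{P}_{n}\mathbf{M}\tilde{\mathbf{p}}_{k}) = \mathbf{R}_{n}\mathbf{P}_{n}\mathbf{D}\mathbf{M}^{2}\tilde{\mathbf{p}}_{k}$, and the verification proceeds as in (a); the only new bookkeeping item is that the upper block now contains $\lambda_{n+2}$ as its last diagonal entry (reflecting the grouping in~\eqref{eq:2.5a}), which accounts for the extra coordinate $1/(\mu_{k}-\lambda_{n+2})$ appearing in~\eqref{eq:3.8a}. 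The main obstacle is the final algebraic collapse in the upper block: after stripping away the $u_{j}$ and $(\mu_{k}-\lambda_{2j+1})^{-1}$ factors, the identity has to reduce, via the explicit value of $\|\mathbf{u}\|^{2}$ (respectively $\|\mathbf{v}\|^{2}$), to the Theorem~\ref{thr1} secular equation. The structural part (block-diagonalisation, cancellation of invertible prefactors) is routine; the scalar simplification is where the care is needed.
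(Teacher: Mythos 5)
Your overall architecture is sound, and the two ingredients you isolate (the decomposition of Lemma~\ref{lem3} and the normalisation $\mathbf{u}^{\top}\mathbf{u}-\tfrac{n+3}{4\sin^{2}(\pi/(n+3))}=-1$) are exactly the ones the paper uses; the paper merely runs the computation in the opposite direction, solving the conjugated homogeneous system $(\mu_{k}\mathbf{I}_{n}-\mathbf{H}_{n})\mathbf{q}=\mathbf{0}$ and reading off $\mathbf{P}_{n}\mathbf{R}_{n}\mathbf{q}$ instead of substituting the candidate vector. However, your pivotal identity is stated the wrong way round, and as written the verification fails. From $\mathbf{R}_{n}^{2}=\mathbf{I}_{n}-\mathbf{x}\mathbf{x}^{\top}-\mathbf{y}\mathbf{y}^{\top}$ one gets
\[
\mathbf{P}_{n}\mathbf{R}_{n}^{2}\mathbf{P}_{n}^{\top}=\mathrm{diag}\Bigl(\mathbf{I}_{\frac{n}{2}}-\tfrac{4\sin^{2}(\pi/(n+3))}{n+3}\,\mathbf{u}\mathbf{u}^{\top},\ \mathbf{I}_{\frac{n}{2}}-\tfrac{4\sin^{2}(\pi/(n+3))}{n+3}\,\mathbf{v}\mathbf{v}^{\top}\Bigr)=\mathbf{M}^{-1},
\]
not $\mathbf{M}$: it is the \emph{inverse} of this block matrix that equals $\mathbf{M}=\mathrm{diag}(\mathbf{I}+\mathbf{u}\mathbf{u}^{\top},\mathbf{I}+\mathbf{v}\mathbf{v}^{\top})$, via the very normalisation you quote. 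Consequently $\mathbf{H}_{n}\mathbf{w}_{k}=\mathbf{R}_{n}\mathbf{P}_{n}^{\top}\mathbf{D}\mathbf{M}^{-1}\mathbf{M}\tilde{\mathbf{p}}_{k}=\mathbf{R}_{n}\mathbf{P}_{n}^{\top}\mathbf{D}\tilde{\mathbf{p}}_{k}$, and the identity to be checked is $\mathbf{D}\tilde{\mathbf{p}}_{k}=\mu_{k}\mathbf{M}\tilde{\mathbf{p}}_{k}$, not $\mathbf{D}\mathbf{M}^{2}\tilde{\mathbf{p}}_{k}=\mu_{k}\mathbf{M}\tilde{\mathbf{p}}_{k}$.

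The distinction is not cosmetic: your target $\mathbf{D}\mathbf{M}^{2}\tilde{\mathbf{p}}_{k}=\mu_{k}\mathbf{M}\tilde{\mathbf{p}}_{k}$ is false and does not disentangle, because expanding $(\mathbf{I}+\mathbf{u}\mathbf{u}^{\top})^{2}$ produces the term $(2+\|\mathbf{u}\|^{2})(\mathbf{u}^{\top}\mathbf{p}_{k})\,\boldsymbol{\Upsilon}\mathbf{u}$ with $\boldsymbol{\Upsilon}=\mathrm{diag}(\lambda_{3},\ldots,\lambda_{n+1})$, whose $j$th component carries the factor $\lambda_{2j+1}$ and is neither proportional to $\mathbf{u}$ nor absorbable into $(\boldsymbol{\Upsilon}-\mu_{k}\mathbf{I})\mathbf{p}_{k}=-\mathbf{u}$; no scalar secular relation survives. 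The corrected target collapses exactly as you describe: the upper block reads $(\boldsymbol{\Upsilon}-\mu_{k}\mathbf{I})\mathbf{p}_{k}+(\lambda_{1}-\mu_{k})(\mathbf{u}^{\top}\mathbf{p}_{k})\mathbf{u}=\bigl[(\lambda_{1}-\mu_{k})s-1\bigr]\mathbf{u}$, which vanishes precisely by $f(\mu_{k})=0$ in the form \eqref{eq:3.3}. You should also record, as the paper does via Corollary~\ref{cor1}, that the distinctness of $\lambda_{1},\lambda_{3},\ldots,\lambda_{n+1}$ forces $\mu_{k}\neq\lambda_{2j+1}$, so that $\tilde{\mathbf{p}}_{k}$ is well defined and nonzero. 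The same inversion must be made in your part (b), where the correct identity is $\mathbf{P}_{n}^{\top}(\mathbf{R}_{n}^{\top}\mathbf{R}_{n})^{-1}\mathbf{P}_{n}=\mathbf{M}$. With these corrections your argument goes through and is essentially the paper's proof read backwards.
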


\begin{proof}
Since both assertions can be proven in the same way, we only prove (a). Let $n \in \mathbb{N}$ be even.

\begin{itemize}
\item[i.] Supposing $\lambda_{1}, \lambda_{3}, \ldots, \lambda_{n+1}$ all distinct, Corollary~\ref{cor1} guarantees that the zeros of \eqref{eq:3.1a} are all simple. Setting
    \begin{gather*}
    \boldsymbol{\Upsilon}_{\frac{n}{2}} := \mathrm{diag}(\lambda_{3}, \lambda_{5}, \ldots, \lambda_{n+1}) \\
    \boldsymbol{\Delta}_{\frac{n}{2}} := \mathrm{diag}(\lambda_{2}, \lambda_{4}, \ldots, \lambda_{n})
    \end{gather*}
    we can rewrite the matricial equation $(\mu_{k} \mathbf{I}_{n} - \mathbf{H}_{n}) \mathbf{q} = \mathbf{0}$ as
    \begin{equation}\label{eq:3.9}
    \mathbf{R}_{n} \mathbf{P}_{n}^{\top} \left[
    \setlength{\extrarowheight}{2pt}
    \begin{array}{c|c}
      \mu_{k} \mathbf{I}_{\frac{n}{2}} - \boldsymbol{\Upsilon}_{\frac{n}{2}} + (\mu_{k} - \lambda_{1}) \mathbf{u} \mathbf{u}^{\top} & \mathbf{O} \\[2pt] \hline
      \mathbf{O} & \mu_{k} \mathbf{I}_{\frac{n}{2}} - \boldsymbol{\Delta}_{\frac{n}{2}} + (\mu_{k} - \lambda_{n+2}) \mathbf{v} \mathbf{v}^{\top}
    \end{array}
    \right] \mathbf{P}_{n} \mathbf{R}_{n} \mathbf{q} = \mathbf{0}
    \end{equation}
    with $\mathbf{u}, \mathbf{v}$ given by \eqref{eq:2.4b}, $\mathbf{R}_{n}$ defined in \eqref{eq:2.4c} and $\mathbf{P}_{n}$ the permutation matrix \eqref{eq:2.4d}. Thus,
    \begin{gather*}
    \left[\mu_{k} \mathbf{I}_{\frac{n}{2}} - \boldsymbol{\Upsilon}_{\frac{n}{2}} + (\mu_{k} - \lambda_{1}) \mathbf{u} \mathbf{u}^{\top} \right] \mathbf{q}_{1} = \mathbf{0}, \\
    \left[\mu_{k} \mathbf{I}_{\frac{n}{2}} - \boldsymbol{\Delta}_{\frac{n}{2}} + (\mu_{k} - \lambda_{n+2}) \mathbf{v} \mathbf{v}^{\top} \right] \mathbf{q}_{2} = \mathbf{0}, \\
    \left[
    \begin{array}{c}
      \mathbf{q}_{1} \\
      \mathbf{q}_{2}
    \end{array}
    \right] = \mathbf{P}_{n} \mathbf{R}_{n} \mathbf{q}
    \end{gather*}
    that is,
    \begin{gather*}
    \mathbf{q}_{1} = \alpha (\mu_{k} \mathbf{I}_{\frac{n}{2}} - \boldsymbol{\Upsilon}_{\frac{n}{2}})^{-1} \mathbf{u}, \\
    \mathbf{q}_{2} = \mathbf{0}
    \end{gather*}
    for $\alpha \neq 0$ and
    \begin{equation*}
    \mathbf{q} =  \mathbf{R}_{n}^{-1} \mathbf{P}_{n}^{\top}  \left[
    \begin{array}{c}
      \alpha (\mu_{k} \mathbf{I}_{\frac{n}{2}} - \boldsymbol{\Upsilon}_{\frac{n}{2}})^{-1} \mathbf{u} \\
      \mathbf{0}
    \end{array}
    \right]
    \end{equation*}
    is a nontrivial solution of \eqref{eq:3.9}, and so an eigenvector of $\mathbf{H}_{n}$ associated to the eigenvalue $\mu_{k}$. Since
    \begin{equation}\label{eq:3.10}
    \begin{split}
    \mathbf{R}_{n}^{-1} \mathbf{P}_{n}^{\top} &= \mathbf{R}_{n} \mathbf{R}_{n}^{-2} \mathbf{P}_{n}^{\top} \\
    &= \mathbf{R}_{n} \left(\mathbf{I}_{n} - \mathbf{x} \mathbf{x}^{\top} - \mathbf{y} \mathbf{y}^{\top} \right)^{-1} \mathbf{P}_{n}^{\top} \\
    &= \mathbf{R}_{n} \mathbf{P}_{n}^{\top} \left[\mathbf{P}_{n} \left(\mathbf{I}_{n} - \mathbf{x} \mathbf{x}^{\top} - \mathbf{y} \mathbf{y}^{\top} \right) \mathbf{P}_{n}^{\top} \right]^{-1} \\
    &= \mathbf{R}_{n} \mathbf{P}_{n}^{\top} \left[
    \begin{array}{cc}
      \mathbf{I}_{\frac{n}{2}} + \mathbf{u} \mathbf{u}^{\top} & \mathbf{O} \\
      \mathbf{O} & \mathbf{I}_{\frac{n}{2}} + \mathbf{v} \mathbf{v}^{\top}
    \end{array}
    \right],
    \end{split}
    \end{equation}
    where $\mathbf{x}, \mathbf{y}$ are given by \eqref{eq:2.6}, the result follows choosing $\alpha = 1$.

\medskip

\item[ii.] Considering $\lambda_{2}, \lambda_{4}, \ldots, \lambda_{n+2}$ all distinct, the zeros of \eqref{eq:3.1b} are all simple according to Corollary~\ref{cor1}. Hence,
    \begin{equation*}
    \mathbf{R}_{n} \mathbf{P}_{n}^{\top} \left[
    \setlength{\extrarowheight}{2pt}
    \begin{array}{c|c}
      \nu_{k} \mathbf{I}_{\frac{n}{2}} - \boldsymbol{\Upsilon}_{\frac{n}{2}} + (\nu_{k} - \lambda_{1}) \mathbf{u} \mathbf{u}^{\top} & \mathbf{O} \\[2pt] \hline
      \mathbf{O} & \nu_{k} \mathbf{I}_{\frac{n}{2}} - \boldsymbol{\Delta}_{\frac{n}{2}} + (\nu_{k} - \lambda_{n+2}) \mathbf{v} \mathbf{v}^{\top}
    \end{array}
    \right] \mathbf{P}_{n} \mathbf{R}_{n} \mathbf{q} = \mathbf{0}
    \end{equation*}
    leads to
    \begin{equation*}
    \mathbf{q} =  \mathbf{R}_{n}^{-1} \mathbf{P}_{n}^{\top}  \left[
    \begin{array}{c}
      \mathbf{0} \\
      \alpha (\nu_{k} \mathbf{I}_{\frac{n}{2}} - \boldsymbol{\Delta}_{\frac{n}{2}})^{-1} \mathbf{v}
    \end{array}
    \right],
    \end{equation*}
    for $\alpha \neq 0$, which is an eigenvector of $\mathbf{H}_{n}$ associated to the eigenvalue $\nu_{k}$. The conclusion follows from \eqref{eq:3.10}.
\end{itemize}
\end{proof}

\begin{rem}
Recall that if $n$ is even and some $\lambda_{2k-1}$ is a root of \eqref{eq:3.1a} (and \emph{a fortiori} an eigenvalue of $\mathbf{H}_{n}$) then $\lambda_{2k-1} = \lambda_{2\ell-1}$ for a suitable $\ell \neq k$ according to Corollary~\ref{cor1}, so that
\begin{equation*}
\mathbf{R}_{n} \mathbf{P}_{n}^{\top} \left[
    \begin{array}{cc}
      \mathbf{I}_{\frac{n}{2}} + \mathbf{u} \mathbf{u}^{\top} & \mathbf{O} \\
      \mathbf{O} & \mathbf{I}_{\frac{n}{2}} + \mathbf{v} \mathbf{v}^{\top}
    \end{array}
    \right] \left[
\begin{array}{c}
\mathbf{q}_{1} \\
\mathbf{0}
\end{array}
\right],
\end{equation*}
where $\mathbf{u}, \mathbf{v}$ are given in \eqref{eq:2.4b}, $\mathbf{R}_{n}$ is the $n \times n$ matrix \eqref{eq:2.4c}, $\mathbf{P}_{n}$ is the $n \times n$ permutation matrix \eqref{eq:2.4d},
\begin{equation*}
\mathbf{q}_{1} = \left\{
\begin{array}{l}
  \sin \left[\frac{(2\ell - 1) \pi}{n + 3} \right] \mathbf{i}_{k} - \sin \left[\frac{(2k - 1) \pi}{n + 3} \right] \mathbf{i}_{\ell} \; \; \textnormal{if} \; \; k,\ell \neq 1 \\[10pt]
  \mathbf{i}_{\ell} \; \; \textnormal{if} \; \; k = 1 \\[3pt]
  \mathbf{i}_{k} \; \; \textnormal{if} \; \; \ell = 1
\end{array}
\right.,
\end{equation*}
and $\mathbf{i}_{k}$ denotes the $k$th column of $\mathbf{I}_{\frac{n}{2}}$, is an eigenvector of $\mathbf{H}_{n}$ associated to $\lambda_{2k-1}$; if a $\lambda_{2k}$ is root of \eqref{eq:3.1b} then $\lambda_{2k} = \lambda_{2\ell}$ for some $\ell \neq k$ and
\begin{equation*}
\mathbf{R}_{n} \mathbf{P}_{n}^{\top} \left[
    \begin{array}{cc}
      \mathbf{I}_{\frac{n}{2}} + \mathbf{u} \mathbf{u}^{\top} & \mathbf{O} \\
      \mathbf{O} & \mathbf{I}_{\frac{n}{2}} + \mathbf{v} \mathbf{v}^{\top}
    \end{array}
    \right] \left[
\begin{array}{c}
\mathbf{0} \\
\mathbf{q}_{2}
\end{array}
\right],
\end{equation*}
with $\mathbf{u}, \mathbf{v}$ given by \eqref{eq:2.4b}, $\mathbf{R}_{n}$ the $n \times n$ matrix \eqref{eq:2.4c}, $\mathbf{P}_{n}$ the $n \times n$ permutation matrix \eqref{eq:2.4d} and
\begin{equation*}
\mathbf{q}_{2} = \left\{
\begin{array}{l}
  \sin \left(\frac{2\ell \pi}{n + 3} \right) \mathbf{i}_{k} - \sin \left(\frac{2k \pi}{n + 3} \right) \mathbf{i}_{\ell} \; \; \textnormal{if} \; \; k,\ell \neq \frac{n+2}{2} \\[10pt]
  \mathbf{i}_{\ell} \; \; \textnormal{if} \; \; k = \frac{n+2}{2} \\[7pt]
  \mathbf{i}_{k} \; \; \textnormal{if} \; \; \ell = \frac{n+2}{2}
\end{array}
\right.
\end{equation*}
is an eigenvector of $\mathbf{H}_{n}$ associated to $\lambda_{2k}$. Analogously, if $n$ is odd and some $\lambda_{2k-1}$ is a root of \eqref{eq:3.2a} then $\lambda_{2k-1} = \lambda_{2\ell-1}$ for a suitable $\ell \neq k$ and
\begin{equation*}
\mathbf{R}_{n} \mathbf{P}_{n} \left[
    \begin{array}{cc}
      \mathbf{I}_{\frac{n+1}{2}} + \mathbf{u} \mathbf{u}^{\top} & \mathbf{O} \\
      \mathbf{O} & \mathbf{I}_{\frac{n-1}{2}} + \mathbf{v} \mathbf{v}^{\top}
    \end{array}
    \right] \left[
\begin{array}{c}
\mathbf{q}_{1} \\
\mathbf{0}
\end{array}
\right],
\end{equation*}
where $\mathbf{u}, \mathbf{v}$ are given in \eqref{eq:2.5b}, $\mathbf{R}_{n}$ is the $n \times n$ matrix \eqref{eq:2.5c}, $\mathbf{P}_{n}$ is the $n \times n$ permutation matrix \eqref{eq:2.5d},
\begin{equation*}
\mathbf{q}_{1} = \left\{
\begin{array}{l}
  \sin \left[\frac{(2\ell - 1) \pi}{n + 3} \right] \mathbf{i}_{k} - \sin \left[\frac{(2k - 1)\pi}{n + 3} \right] \mathbf{i}_{\ell} \; \; \textnormal{if} \; \; k,\ell \neq 1 \\[10pt]
  \mathbf{i}_{\ell} \; \; \textnormal{if} \; \; k = 1 \\[3pt]
  \mathbf{i}_{k} \; \; \textnormal{if} \; \; \ell = 1
\end{array}
\right.,
\end{equation*}
and $\mathbf{i}_{k}$ denotes the $k$th column of $\mathbf{I}_{\frac{n+1}{2}}$, is an eigenvector of $\mathbf{H}_{n}$ associated to $\lambda_{2k-1}$; if a $\lambda_{2k}$ is a root of \eqref{eq:3.2b} then $\lambda_{2k} = \lambda_{2\ell}$ for some $\ell \neq k$ and
\begin{equation*}
\mathbf{R}_{n} \mathbf{P}_{n} \left[
    \begin{array}{cc}
      \mathbf{I}_{\frac{n+1}{2}} + \mathbf{u} \mathbf{u}^{\top} & \mathbf{O} \\
      \mathbf{O} & \mathbf{I}_{\frac{n-1}{2}} + \mathbf{v} \mathbf{v}^{\top}
    \end{array}
    \right] \left[
\begin{array}{c}
\mathbf{0} \\
\mathbf{q}_{2}
\end{array}
\right],
\end{equation*}
with $\mathbf{u}, \mathbf{v}$ given by \eqref{eq:2.5b}, $\mathbf{R}_{n}$ the $n \times n$ matrix \eqref{eq:2.5c}, $\mathbf{P}_{n}$ the $n \times n$ permutation matrix \eqref{eq:2.5d},
\begin{equation*}
\mathbf{q}_{2} = \left\{
\begin{array}{l}
  \sin \left(\frac{2\ell \pi}{n + 3} \right) \mathbf{i}_{k} - \sin \left(\frac{2k \pi}{n + 3} \right) \mathbf{i}_{\ell}, \; \; \textnormal{if} \; \; k,\ell \neq \frac{n+1}{2} \\[10pt]
  \mathbf{i}_{\ell}, \; \; \textnormal{if} \; \; k = \frac{n+1}{2} \\[3pt]
  \mathbf{i}_{k}, \; \; \textnormal{if} \; \; \ell = \frac{n+1}{2}
\end{array}
\right.
\end{equation*}
and $\mathbf{i}_{k}$ denotes the $k$th column of $\mathbf{I}_{\frac{n-1}{2}}$, is an eigenvector of $\mathbf{H}_{n}$ associated to $\lambda_{2k}$.
\end{rem}

\section{Integer powers of $\mathbf{H}_{n}$}

\indent

The decomposition in Lemma~\ref{lem3} can still be used to compute explicitly the inverse of $\mathbf{H}_{n}$.

\begin{thr}\label{thr3}
Let $n \in \mathbb{N}$, $a,b,c,d \in \mathbb{C}$, $\lambda_{k}$, $k = 1,2,\ldots,n+2$ be given by \eqref{eq:2.3} and $\mathbf{H}_{n}$ the $n \times n$ matrix \eqref{eq:1.1}. If $\lambda_{k} \neq 0$ for every $k = 1,2,\ldots,n+2$, $\mathbf{H}_{n}$ is nonsingular and:

\medskip

\noindent \textnormal{(a)} $n$ is even then
\begin{subequations}
\begin{equation*}
\mathbf{H}_{n}^{-1} = \mathbf{R}_{n} \mathbf{P}_{n}^{\top} \left[
\begin{array}{cc}
\mathbf{U}_{\frac{n}{2}} & \mathbf{O} \\
\mathbf{O} & \mathbf{V}_{\frac{n}{2}}
\end{array}
\right] \mathbf{P}_{n} \mathbf{R}_{n}
\end{equation*}
where $\mathbf{R}_{n}$ is the $n \times n$ matrix \eqref{eq:2.4c}, $\mathbf{P}_{n}$ is the $n \times n$ permutation matrix \eqref{eq:2.4d},
\begin{equation}\label{eq:4.1a}
\begin{split}
\mathbf{U}_{\frac{n}{2}} = \left(\mathbf{I}_{\frac{n}{2}} + \mathbf{u} \mathbf{u}^{\top} \right) & \bigg[\mathrm{diag} \left(\lambda_{3}^{-1},\lambda_{5}^{-1},\ldots,\lambda_{n+1}^{-1} \right) - \tfrac{\lambda_{1}}{1 + \lambda_{1} \mathbf{u}^{\top} \mathrm{diag} \left(\lambda_{3}^{-1},\lambda_{5}^{-1},\ldots,\lambda_{n+1}^{-1} \right) \mathbf{u}} \cdot \bigg. \\
& \bigg. \mathrm{diag} \left(\lambda_{3}^{-1},\lambda_{5}^{-1},\ldots,\lambda_{n+1}^{-1} \right) \mathbf{u} \mathbf{u}^{\top} \mathrm{diag} \left(\lambda_{3}^{-1},\lambda_{5}^{-1},\ldots,\lambda_{n+1}^{-1} \right) \bigg] \left(\mathbf{I}_{\frac{n}{2}} + \mathbf{u} \mathbf{u}^{\top} \right)
\end{split}
\end{equation}
and
\begin{equation}\label{eq:4.1b}
\begin{split}
\mathbf{V}_{\frac{n}{2}} = \left(\mathbf{I}_{\frac{n}{2}} + \mathbf{v} \mathbf{v}^{\top} \right) & \bigg[\mathrm{diag} \left(\lambda_{2}^{-1},\lambda_{4}^{-1},\ldots,\lambda_{n}^{-1} \right) - \tfrac{\lambda_{n+2}}{1 + \lambda_{n+2} \mathbf{v}^{\top} \mathrm{diag} \left(\lambda_{2}^{-1},\lambda_{4}^{-1},\ldots,\lambda_{n}^{-1} \right) \mathbf{v}} \cdot \bigg. \\
& \bigg. \mathrm{diag} \left(\lambda_{2}^{-1},\lambda_{4}^{-1},\ldots,\lambda_{n}^{-1} \right) \mathbf{v} \mathbf{v}^{\top} \mathrm{diag} \left(\lambda_{2}^{-1},\lambda_{4}^{-1},\ldots,\lambda_{n}^{-1} \right) \bigg] \left(\mathbf{I}_{\frac{n}{2}} + \mathbf{v} \mathbf{v}^{\top} \right)
\end{split}
\end{equation}
with $\mathbf{u}, \mathbf{v}$ given by \eqref{eq:2.4b}.
\end{subequations}

\medskip

\noindent \textnormal{(b)} $n$ is odd then
\begin{subequations}
\begin{equation*}
\mathbf{H}_{n}^{-1} = \mathbf{R}_{n} \mathbf{P}_{n} \left[
\begin{array}{cc}
\mathbf{U}_{\frac{n+1}{2}} & \mathbf{O} \\
\mathbf{O} & \mathbf{V}_{\frac{n-1}{2}}
\end{array}
\right] \mathbf{P}_{n}^{\top} \mathbf{R}_{n}^{\top}
\end{equation*}
where $\mathbf{R}_{n}$ is the $n \times n$ matrix \eqref{eq:2.5c}, $\mathbf{P}_{n}$ is the $n \times n$ permutation matrix \eqref{eq:2.5d},
\begin{equation}\label{eq:4.2a}
\begin{split}
\mathbf{U}_{\frac{n+1}{2}} & = \left(\mathbf{I}_{\frac{n+1}{2}} + \mathbf{u} \mathbf{u}^{\top} \right) \bigg[\mathrm{diag} \left(\lambda_{3}^{-1},\lambda_{5}^{-1},\ldots,\lambda_{n}^{-1},\lambda_{n+2}^{-1} \right) - \tfrac{\lambda_{1}}{1 + \lambda_{1} \mathbf{u}^{\top} \mathrm{diag} \left(\lambda_{3}^{-1},\lambda_{5}^{-1},\ldots,\lambda_{n}^{-1}, \lambda_{n+2}^{-1} \right) \mathbf{u}} \cdot \bigg. \\
& \hspace*{0.8cm} \bigg. \mathrm{diag} \left(\lambda_{3}^{-1},\lambda_{5}^{-1},\ldots,\lambda_{n}^{-1}, \lambda_{n+2}^{-1} \right) \mathbf{u} \mathbf{u}^{\top} \mathrm{diag} \left(\lambda_{3}^{-1},\lambda_{5}^{-1},\ldots,\lambda_{n}^{-1},\lambda_{n+2}^{-1} \right) \bigg] \left(\mathbf{I}_{\frac{n+1}{2}} + \mathbf{u} \mathbf{u}^{\top} \right)
\end{split}
\end{equation}
and
\begin{equation}\label{eq:4.2b}
\begin{split}
\mathbf{V}_{\frac{n-1}{2}} & = \left(\mathbf{I}_{\frac{n-1}{2}} + \mathbf{v} \mathbf{v}^{\top} \right) \bigg[\mathrm{diag} \left(\lambda_{2}^{-1},\lambda_{4}^{-1},\ldots,\lambda_{n-1}^{-1} \right) - \tfrac{\lambda_{n+1}}{1 + \lambda_{n+1} \mathbf{v}^{\top} \mathrm{diag} \left(\lambda_{2}^{-1},\lambda_{4}^{-1},\ldots,\lambda_{n-1}^{-1} \right) \mathbf{v}} \cdot \bigg. \\
& \hspace*{2.0cm} \bigg. \mathrm{diag} \left(\lambda_{2}^{-1},\lambda_{4}^{-1},\ldots,\lambda_{n-1}^{-1} \right) \mathbf{v} \mathbf{v}^{\top} \mathrm{diag} \left(\lambda_{2}^{-1},\lambda_{4}^{-1},\ldots,\lambda_{n-1}^{-1} \right) \bigg] \left(\mathbf{I}_{\frac{n-1}{2}} + \mathbf{v} \mathbf{v}^{\top} \right)
\end{split}
\end{equation}
with $\mathbf{u}, \mathbf{v}$ given by \eqref{eq:2.5b}.
\end{subequations}
\end{thr}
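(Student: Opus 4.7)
The plan is to take the decomposition of $\mathbf{H}_{n}$ from Lemma~\ref{lem3} and invert it piece by piece, absorbing the inverses of the outer factors back into $\mathbf{R}_{n}$ and $\mathbf{P}_{n}$ via the identity \eqref{eq:3.10} that was already extracted during the proof of Theorem~\ref{thr2}, and applying the Sherman--Morrison formula to invert the rank-one-updated diagonal blocks in the middle.

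For the even case, start from
\begin{equation*}
\mathbf{H}_{n} = \mathbf{R}_{n} \mathbf{P}_{n}^{\top} \mathrm{diag}\!\left(\boldsymbol{\Sigma}_{n/2}, \boldsymbol{\Gamma}_{n/2}\right) \mathbf{P}_{n} \mathbf{R}_{n},
\end{equation*}
where $\boldsymbol{\Sigma}_{n/2} = \mathrm{diag}(\lambda_{3},\ldots,\lambda_{n+1}) + \lambda_{1}\mathbf{u}\mathbf{u}^{\top}$ and $\boldsymbol{\Gamma}_{n/2} = \mathrm{diag}(\lambda_{2},\ldots,\lambda_{n}) + \lambda_{n+2}\mathbf{v}\mathbf{v}^{\top}$. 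Since $\mathbf{P}_{n}$ is orthogonal and the hypothesis $\lambda_{k} \neq 0$ guarantees (via Theorem~\ref{thr1}) that $\mathbf{H}_{n}$ is invertible, we get $\mathbf{H}_{n}^{-1} = \mathbf{R}_{n}^{-1} \mathbf{P}_{n}^{\top} \mathrm{diag}(\boldsymbol{\Sigma}_{n/2}^{-1}, \boldsymbol{\Gamma}_{n/2}^{-1}) \mathbf{P}_{n} \mathbf{R}_{n}^{-1}$. The crucial substitution is the identity
\begin{equation*}
\mathbf{R}_{n}^{-1} \mathbf{P}_{n}^{\top} = \mathbf{R}_{n} \mathbf{P}_{n}^{\top} \mathrm{diag}\!\left(\mathbf{I}_{n/2} + \mathbf{u}\mathbf{u}^{\top}, \mathbf{I}_{n/2} + \mathbf{v}\mathbf{v}^{\top}\right)
\end{equation*}
proved in \eqref{eq:3.10}; transposing gives the mirror identity $\mathbf{P}_{n}\mathbf{R}_{n}^{-1} = \mathrm{diag}(\mathbf{I}_{n/2} + \mathbf{u}\mathbf{u}^{\top}, \mathbf{I}_{n/2} + \mathbf{v}\mathbf{v}^{\top}) \mathbf{P}_{n}\mathbf{R}_{n}$. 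Substituting these into the sandwich yields an outer $\mathbf{R}_{n}\mathbf{P}_{n}^{\top}\,\cdots\,\mathbf{P}_{n}\mathbf{R}_{n}$ wrapped around a product of the form $(\mathbf{I}+\mathbf{u}\mathbf{u}^{\top})\boldsymbol{\Sigma}_{n/2}^{-1}(\mathbf{I}+\mathbf{u}\mathbf{u}^{\top})$ on the first block and similarly for the second. Applying Sherman--Morrison,
\begin{equation*}
\boldsymbol{\Sigma}_{n/2}^{-1} = \mathbf{D}^{-1} - \frac{\lambda_{1}\,\mathbf{D}^{-1}\mathbf{u}\mathbf{u}^{\top}\mathbf{D}^{-1}}{1 + \lambda_{1}\mathbf{u}^{\top}\mathbf{D}^{-1}\mathbf{u}}, \qquad \mathbf{D} := \mathrm{diag}(\lambda_{3},\ldots,\lambda_{n+1}),
\end{equation*}
(valid because the denominator is nonzero exactly when $\lambda_{1}\neq 0$ and $\boldsymbol{\Sigma}_{n/2}$ is invertible, which follows from $\det\mathbf{H}_{n}\neq 0$), and likewise for $\boldsymbol{\Gamma}_{n/2}^{-1}$, produces the displayed formulas \eqref{eq:4.1a} and \eqref{eq:4.1b}.

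For the odd case the strategy is identical, but $\mathbf{R}_{n}$ is no longer symmetric, so one instead works with $\mathbf{R}_{n}^{\top}\mathbf{R}_{n} = \mathbf{I}_{n} - \mathbf{z}\mathbf{z}^{\top} - \mathbf{w}\mathbf{w}^{\top}$ from part (b) of the proof of Lemma~\ref{lem3}. Writing $\mathbf{H}_{n} = \mathbf{R}_{n}\mathbf{P}_{n}\mathrm{diag}(\boldsymbol{\Sigma}_{(n+1)/2}, \boldsymbol{\Gamma}_{(n-1)/2})\mathbf{P}_{n}^{\top}\mathbf{R}_{n}^{\top}$ and inverting gives $(\mathbf{R}_{n}^{\top})^{-1}\mathbf{P}_{n}\mathrm{diag}(\boldsymbol{\Sigma}_{(n+1)/2}^{-1},\boldsymbol{\Gamma}_{(n-1)/2}^{-1})\mathbf{P}_{n}^{\top}\mathbf{R}_{n}^{-1}$. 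I would then derive the analogue of \eqref{eq:3.10}, namely
\begin{equation*}
\mathbf{P}_{n}^{\top}\mathbf{R}_{n}^{-1} = \mathrm{diag}\!\left(\mathbf{I}_{(n+1)/2}+\mathbf{u}\mathbf{u}^{\top}, \mathbf{I}_{(n-1)/2}+\mathbf{v}\mathbf{v}^{\top}\right)\mathbf{P}_{n}^{\top}\mathbf{R}_{n}^{\top},
\end{equation*}
by multiplying $(\mathbf{R}_{n}^{\top}\mathbf{R}_{n})^{-1} = (\mathbf{I}_{n}-\mathbf{z}\mathbf{z}^{\top}-\mathbf{w}\mathbf{w}^{\top})^{-1}$ on the left by $\mathbf{P}_{n}^{\top}$ and on the right by $\mathbf{P}_{n}$, and checking that the permuted rank-two correction becomes block diagonal with the $\mathbf{u}$ and $\mathbf{v}$ of \eqref{eq:2.5b} (this is where the specific form of $\mathbf{P}_{n}$ and the bordering vectors in \eqref{eq:2.9} do the work). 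Then Sherman--Morrison is applied to each block exactly as before to obtain \eqref{eq:4.2a} and \eqref{eq:4.2b}.

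The routine part is the two Sherman--Morrison updates; the only real obstacle is bookkeeping in the odd case, where the lack of symmetry of $\mathbf{R}_{n}$ forces us to track $\mathbf{R}_{n}^{-1}$ and $(\mathbf{R}_{n}^{\top})^{-1}$ separately and to verify that the permutation $\mathbf{P}_{n}$ of \eqref{eq:2.5d} converts $\mathbf{z}\mathbf{z}^{\top} + \mathbf{w}\mathbf{w}^{\top}$ into a block-diagonal matrix whose nonzero rank-one summands are precisely $(4\sin^{2}(\pi/(n+3))/(n+3))\mathbf{u}\mathbf{u}^{\top}$ and $(4\sin^{2}((n+1)\pi/(n+3))/(n+3))\mathbf{v}\mathbf{v}^{\top}$. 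Once that is verified, the factor $\mathbf{I}+\mathbf{u}\mathbf{u}^{\top}$ (resp.\ $\mathbf{I}+\mathbf{v}\mathbf{v}^{\top}$) appears naturally as the inverse of $\mathbf{I}-\frac{1}{1+\mathbf{u}^{\top}\mathbf{u}}\mathbf{u}\mathbf{u}^{\top}$ after using the identities $\mathbf{u}^{\top}\mathbf{u} - (n+3)/(4\sin^{2}(\pi/(n+3))) = -1$ and its analogue for $\mathbf{v}$, already invoked inside the proof of Theorem~\ref{thr1}.
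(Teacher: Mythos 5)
Your proposal is correct and follows essentially the same route as the paper: invert the Lemma~\ref{lem3} factorization, apply Sherman--Morrison--Woodbury to the two rank-one-perturbed diagonal blocks, and absorb the factors $\mathbf{R}_{n}^{-1}$ via the identity \eqref{eq:3.10} together with its transposed companion $\mathbf{P}_{n}\mathbf{R}_{n}^{-1}=\mathrm{diag}\left(\mathbf{I}_{\frac{n}{2}}+\mathbf{u}\mathbf{u}^{\top},\mathbf{I}_{\frac{n}{2}}+\mathbf{v}\mathbf{v}^{\top}\right)\mathbf{P}_{n}\mathbf{R}_{n}$ (and, for odd $n$, the analogues built from $\mathbf{R}_{n}^{\top}\mathbf{R}_{n}=\mathbf{I}_{n}-\mathbf{z}\mathbf{z}^{\top}-\mathbf{w}\mathbf{w}^{\top}$), exactly as the paper does. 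One small correction: $\lambda_{k}\neq 0$ does \emph{not} yield invertibility of $\mathbf{H}_{n}$ via Theorem~\ref{thr1} --- nonsingularity is a separate hypothesis of the statement --- but since you only ever use it as an assumption this does not affect the argument.
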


\begin{proof}
Consider $n \in \mathbb{N}$ even, $a,b,c,d \in \mathbb{C}$ $\lambda_{k} \neq 0$, $k = 1,2,\ldots,n+2$ and $\mathbf{H}_{n}$ nonsingular. According to Sherman-Morrison-Woodbury formula (see \cite{Horn13}, page $19$ or \cite{Harville97}, page $424$), we have
\begin{equation*}
\left(\boldsymbol{\Upsilon}_{\frac{n}{2}} + \lambda_{1} \mathbf{u} \mathbf{u}^{\top} \right)^{-1} = \boldsymbol{\Upsilon}_{\frac{n}{2}}^{-1} - \lambda_{1} \left(1 + \lambda_{1} \mathbf{u}^{\top} \boldsymbol{\Upsilon}_{\frac{n}{2}}^{-1} \mathbf{u} \right)^{-1} \boldsymbol{\Upsilon}_{\frac{n}{2}}^{-1} \mathbf{u} \mathbf{u}^{\top} \boldsymbol{\Lambda}_{\frac{n}{2}}^{-1},
\end{equation*}
where $\boldsymbol{\Upsilon}_{\frac{n}{2}} := \mathrm{diag} \left(\lambda_{3},\lambda_{5},\ldots,\lambda_{n+1} \right)$, $\mathbf{u}$ is given in \eqref{eq:2.4a} and $\mathbf{U}_{\frac{n}{2}}$ is the matrix \eqref{eq:4.1a}. Similarly, we obtain
\begin{equation*}
\left(\boldsymbol{\Delta}_{\frac{n}{2}} + \lambda_{n+2} \mathbf{v} \mathbf{v}^{\top} \right)^{-1} = \boldsymbol{\Delta}_{\frac{n}{2}}^{-1} - \lambda_{n+2} \left(1 + \lambda_{n+2} \mathbf{v}^{\top} \boldsymbol{\Delta}_{\frac{n}{2}}^{-1} \mathbf{v} \right)^{-1} \boldsymbol{\Delta}_{\frac{n}{2}}^{-1} \mathbf{v} \mathbf{v}^{\top} \boldsymbol{\Delta}_{\frac{n}{2}}^{-1},
\end{equation*}
where $\boldsymbol{\Delta}_{\frac{n}{2}} := \mathrm{diag} \left(\lambda_{2},\lambda_{4},\ldots,\lambda_{n} \right)$, $\mathbf{v}$ is given in \eqref{eq:2.4a} and $\mathbf{V}_{\frac{n}{2}}$ is the matrix \eqref{eq:4.1b}. Hence, the decomposition of Lemma~\ref{lem3}, 8.5b of \cite{Harville97} (page $88$), identity \eqref{eq:3.10} and
\begin{equation*}
\begin{split}
\mathbf{P}_{n} \mathbf{R}_{n}^{-1} &= \mathbf{P}_{n} \mathbf{R}_{n}^{-2} \mathbf{R}_{n} \\
&=\mathbf{P}_{n} \left(\mathbf{I}_{n} - \mathbf{x} \mathbf{x}^{\top} - \mathbf{y} \mathbf{y}^{\top} \right)^{-1}  \mathbf{R}_{n} \\
&= \left[
\begin{array}{cc}
    \mathbf{I}_{\frac{n}{2}} + \mathbf{u} \mathbf{u}^{\top} & \mathbf{O} \\
    \mathbf{O} & \mathbf{I}_{\frac{n}{2}} + \mathbf{v} \mathbf{v}^{\top}
\end{array}
\right] \mathbf{P}_{n} \mathbf{R}_{n},
\end{split}
\end{equation*}
where $\mathbf{x}, \mathbf{y}$ are given by \eqref{eq:2.6}, establish the thesis in (a). The proof of (b) is analogous and so will be omitted.
\end{proof}

\begin{rem}
Note that if $n$ is even, $\lambda_{k} \neq 0$ for all $k = 2,3,\ldots,n+1$,
\begin{equation*}
\frac{\lambda_{1}}{\sin^{2} \left(\frac{\pi}{n+3} \right)} \sum_{j=1}^{\frac{n}{2}} \frac{\sin^{2} \left[\frac{(2j+1)\pi}{n+3} \right]}{\lambda_{2j+1}} \neq -1 \; \; \textnormal{and} \; \; \frac{\lambda_{n+2}}{\sin^{2} \left(\frac{\pi}{n+3} \right)} \sum_{j=1}^{\frac{n}{2}} \frac{\sin^{2} \left(\frac{2j\pi}{n+3} \right)}{\lambda_{2j}} \neq -1
\end{equation*}
then $\mathbf{H}_{n}$ is nonsingular. Similarly, if $n$ is odd, $\lambda_{k} \neq 0$ for all $k = 2,3,\ldots,n,n+2$,
\begin{equation*}
\frac{\lambda_{1}}{\sin^{2} \left(\frac{\pi}{n+3} \right)} \sum_{j=1}^{\frac{n+1}{2}} \frac{\sin^{2} \left[\frac{(2j+1)\pi}{n+3} \right]}{\lambda_{2j+1}} \neq -1 \; \; \textnormal{and} \; \; \frac{\lambda_{n+1}}{\sin^{2} \left[\frac{(n+1)\pi}{n+3} \right]} \sum_{j=1}^{\frac{n-1}{2}} \frac{\sin^{2} \left(\frac{2j\pi}{n+3} \right)}{\lambda_{2j}} \neq -1
\end{equation*}
then $\mathbf{H}_{n}$ is a nonsingular matrix.
\end{rem}

Let us point out that the previous Theorems~\ref{thr1} and~\ref{thr2} lead to an immediate eigenvalue decomposition with orthogonal eigenvector matrix for any real anti-heptadiagonal persymmetric Hankel matrix having all the eigenvalues distinct. Indeed, admitting $\mathbf{H}_{n}$ in \eqref{eq:1.1} real with its eigenvalues all different it is well-known that the set of its eigenvectors is orthogonal (see, for instance, \cite{Ford15} page $109$); hence, normalizing each of them we can obtain an orthonormal set, and so an orthogonal eigenvector matrix (see \cite{Williams14}, page $257$). However, it is not guaranteed, in general, that the eigenvalues of $\mathbf{H}_{n}$ are all different even assuming the distinctness of every $\lambda_{2k-1}$ and all $\lambda_{2k}$, as the following examples show:
\begin{equation*}
\mathbf{H}_{4} = \left[
\begin{array}{cccc}
  1 & 0 & 1 & 0 \\
  0 & 1 & 0 & 1 \\
  1 & 0 & 1 & 0 \\
  0 & 1 & 0 & 1
\end{array}
\right] \quad \textnormal{or} \quad \mathbf{H}_{5} = \left[
\begin{array}{ccccc}
  0 & 1 & 0 & 0 & 0 \\
  1 & 0 & 0 & 0 & 0 \\
  0 & 0 & 0 & 0 & 0 \\
  0 & 0 & 0 & 0 & 1 \\
  0 & 0 & 0 & 1 & 0
\end{array}
\right].
\end{equation*}
It is possible to establish an alternative diagonalization for $\mathbf{H}_{n}$ avoiding its eigenvalues. The following auxiliary result is the central key in its accomplishment.

\begin{lem}\label{lem4}
Let $n \in \mathbb{N}$, $a,b,c,d \in \mathbb{C}$, $\lambda_{k}$, $k = 1,2,\ldots,n+2$ be given by \eqref{eq:2.3} and $\mathbf{H}_{n}$ the $n \times n$ matrix \eqref{eq:1.1}.

\medskip

\noindent \textnormal{(a)} If $n$ is even and:

\begin{subequations}
\begin{itemize}
\item[\textnormal{i.}] $\mathbf{u}$ is given in \eqref{eq:2.4b} then the eigenvalues of
\begin{equation}\label{eq:4.3a}
\mathrm{diag} \left(\lambda_{3},\lambda_{5},\ldots,\lambda_{n+1} \right) + \frac{4 \sin^{2} \left(\frac{\pi}{n+3} \right)}{n + 3} \mathrm{diag} \left(\lambda_{1} - \lambda_{3},\lambda_{1} - \lambda_{5},\ldots,\lambda_{1} - \lambda_{n+1} \right) \mathbf{u} \mathbf{u}^{\top}
\end{equation}
are the zeros of the polynomial
\begin{equation}\label{eq:4.3b}
F(t) = \prod_{j=1}^{\frac{n}{2}}(t - \lambda_{2j+1}) + \frac{4}{n + 3} \sum_{j=1}^{\frac{n}{2}} \sin^{2} \left[\frac{(2j + 1) \pi}{n + 3} \right] (\lambda_{2j+1} - \lambda_{1}) \prod_{\substack{m=1 \\ m \neq j}}^{\frac{n}{2}}(t - \lambda_{2m+1}).
\end{equation}
Moreover, if $\lambda_{1},\lambda_{3},\ldots,\lambda_{n+1}$ are all real and distinct then the eigenvalues $\phi_{1}, \phi_{2},\ldots, \phi_{\frac{n}{2}}$ of \eqref{eq:4.3b} are all simple and
\begin{equation}\label{eq:4.3c}
\mathbf{f}_{k} := \left[
\begin{array}{c}
\frac{(\lambda_{3} - \lambda_{1}) \sin\left(\frac{3\pi}{n + 3} \right)}{\lambda_{3} - \phi_{k}} \\
\frac{(\lambda_{5} - \lambda_{1}) \sin\left(\frac{5\pi}{n + 3} \right)}{\lambda_{5} - \phi_{k}} \\
\vdots \\
\frac{(\lambda_{n+1} - \lambda_{1}) \sin\left[\frac{(n + 1)\pi}{n + 3} \right]}{\lambda_{n+1} - \phi_{k}}
\end{array}
\right]
\end{equation}
is an eigenvector associated to $\phi_{k}$, $k = 1,\ldots,\tfrac{n}{2}$.

\item[\textnormal{ii.}] $\mathbf{v}$ is given in \eqref{eq:2.4b} then the eigenvalues of
\begin{equation}\label{eq:4.3d}
\mathrm{diag} \left(\lambda_{2},\lambda_{4},\ldots,\lambda_{n} \right) + \frac{4 \sin^{2} \left(\frac{\pi}{n+3} \right)}{n + 3} \mathrm{diag} \left(\lambda_{n+2} - \lambda_{2},\lambda_{n+2} - \lambda_{4},\ldots,\lambda_{n+2} - \lambda_{n} \right) \mathbf{v} \mathbf{v}^{\top}
\end{equation}
are the zeros of the polynomial
\begin{equation}\label{eq:4.3e}
G(t) = \prod_{j=1}^{\frac{n}{2}}(t - \lambda_{2j}) + \frac{4}{n + 3} \sum_{j=1}^{\frac{n}{2}} \sin^{2} \left(\frac{2j \pi}{n + 3} \right) (\lambda_{2j} - \lambda_{n+2}) \prod_{\substack{m=1 \\ m \neq j}}^{\frac{n}{2}}(t - \lambda_{2m}).
\end{equation}
Furthermore, if $\lambda_{2},\lambda_{4},\ldots,\lambda_{n+2}$ are all real, nonzero and different then the eigenvalues $\psi_{1}, \psi_{2},\ldots, \psi_{\frac{n}{2}}$ of \eqref{eq:4.3e} are all simple and
\begin{equation}\label{eq:4.3f}
\mathbf{g}_{k} := \left[
\begin{array}{c}
\frac{(\lambda_{2} - \lambda_{n+2}) \sin\left(\frac{2\pi}{n + 3} \right)}{\lambda_{2} - \psi_{k}} \\
\frac{(\lambda_{4} - \lambda_{n+2})  \sin\left(\frac{4\pi}{n + 3} \right)}{\lambda_{4} - \psi_{k}} \\
\vdots \\
\frac{(\lambda_{n} - \lambda_{n+2}) \sin\left(\frac{n\pi}{n + 3} \right)}{\lambda_{n} - \psi_{k}}
\end{array}
\right]
\end{equation}
is an eigenvector associated to $\psi_{k}$, $k = 1,\ldots,\tfrac{n}{2}$.
\end{itemize}
\end{subequations}

\medskip

\noindent \textnormal{(b)} If $n$ is odd and:

\begin{subequations}
\begin{itemize}
\item[\textnormal{i.}] $\mathbf{u}$ is given in \eqref{eq:2.5b} then the eigenvalues of
\begin{equation}\label{eq:4.4a}
\mathrm{diag} \left(\lambda_{3},\lambda_{5},\ldots,\lambda_{n}, \lambda_{n+2} \right) + \frac{4 \sin^{2} \left(\frac{\pi}{n+3} \right)}{n + 3} \mathrm{diag} \left(\lambda_{1} - \lambda_{3},\lambda_{1} - \lambda_{5},\ldots,\lambda_{1} - \lambda_{n}, \lambda_{1} - \lambda_{n+2} \right) \mathbf{u} \mathbf{u}^{\top}
\end{equation}
are the zeros of the polynomial
\begin{equation}\label{eq:4.4b}
F(t) = \prod_{j=1}^{\frac{n+1}{2}}(t - \lambda_{2j+1}) + \frac{4}{n + 3} \sum_{j=1}^{\frac{n+1}{2}} \sin^{2} \left[\frac{(2j + 1) \pi}{n + 3} \right] (\lambda_{2j+1} - \lambda_{1}) \prod_{\substack{m=1 \\ m \neq j}}^{\frac{n+1}{2}}(t - \lambda_{2m+1})
\end{equation}
Moreover, if $\lambda_{1},\lambda_{3},\ldots,\lambda_{n},\lambda_{n+2}$ are all real and distinct then the eigenvalues $\phi_{1}, \phi_{2},\ldots, \phi_{\frac{n+1}{2}}$ of \eqref{eq:4.4b} are all simple and
\begin{equation}\label{eq:4.4c}
\mathbf{f}_{k} := \left[
\begin{array}{c}
\frac{(\lambda_{3} - \lambda_{1}) \sin\left(\frac{3\pi}{n + 3} \right)}{\lambda_{3} - \phi_{k}} \\
\frac{(\lambda_{5} - \lambda_{1}) \sin\left(\frac{5\pi}{n + 3} \right)}{\lambda_{5} - \phi_{k}} \\
\vdots \\
\frac{(\lambda_{n} - \lambda_{1}) \sin\left(\frac{n\pi}{n + 3} \right)}{\lambda_{n} - \phi_{k}} \\
\frac{(\lambda_{n+2} - \lambda_{1}) \sin\left[\frac{(n + 2)\pi}{n + 3} \right]}{\lambda_{n+2} - \phi_{k}}
\end{array}
\right]
\end{equation}
is an eigenvector associated to $\phi_{k}$, $k = 1,\ldots,\tfrac{n}{2}$.

\item[\textnormal{ii.}] $\mathbf{v}$ is given in \eqref{eq:2.5b} then the eigenvalues of
\begin{equation}\label{eq:4.4d}
\mathrm{diag} \left(\lambda_{2},\lambda_{4},\ldots,\lambda_{n-1} \right) + \frac{4 \sin^{2} \left[\frac{(n + 1)\pi}{n+3} \right]}{n + 3} \mathrm{diag} \left(\lambda_{n+1} - \lambda_{2},\lambda_{n+1} - \lambda_{4},\ldots,\lambda_{n+1} - \lambda_{n-1} \right) \mathbf{v} \mathbf{v}^{\top}
\end{equation}
are the zeros of the polynomial
\begin{equation}\label{eq:4.4e}
G(t) = \prod_{j=1}^{\frac{n-1}{2}}(t - \lambda_{2j}) + \frac{4}{n + 3} \sum_{j=1}^{\frac{n-1}{2}} \sin^{2} \left(\frac{2j \pi}{n + 3} \right) (\lambda_{2j} - \lambda_{n+1}) \prod_{\substack{m=1 \\ m \neq j}}^{\frac{n-1}{2}}(t - \lambda_{2m}).
\end{equation}
Furthermore, if $\lambda_{2},\lambda_{4},\ldots,\lambda_{n-1},\lambda_{n+1}$ are all real and different then the eigenvalues $\psi_{1}, \psi_{2},\ldots, \psi_{\frac{n-1}{2}}$ of \eqref{eq:4.4e} are all simple and
\begin{equation}\label{eq:4.4f}
\mathbf{g}_{k} := \left[
\begin{array}{c}
\frac{(\lambda_{2} - \lambda_{n+1}) \sin\left(\frac{2\pi}{n + 3} \right)}{\lambda_{2} - \psi_{k}} \\
\frac{(\lambda_{4} - \lambda_{n+1}) \sin\left(\frac{4\pi}{n + 3} \right)}{\lambda_{4} - \psi_{k}} \\
\vdots \\
\frac{(\lambda_{n-1} - \lambda_{n+1}) \sin\left[\frac{(n - 1)\pi}{n + 3} \right]}{\lambda_{n-1} - \psi_{k}}
\end{array}
\right]
\end{equation}
is an eigenvector associated to $\psi_{k}$, $k = 1,\ldots,\tfrac{n-1}{2}$.
\end{itemize}
\end{subequations}
\end{lem}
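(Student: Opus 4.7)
The plan is to establish (a)i in full; parts (a)ii, (b)i, and (b)ii then follow by the same template after replacing $(\mathbf{u},\lambda_{1},\sin(\pi/(n+3)))$ with the corresponding triple. First I would recognize the matrix in \eqref{eq:4.3a} as a diagonal-plus-rank-one update, $M=\boldsymbol{\Lambda}+\alpha\,\mathbf{D}\mathbf{u}\mathbf{u}^{\top}$ with $\boldsymbol{\Lambda}=\mathrm{diag}(\lambda_{3},\ldots,\lambda_{n+1})$, $\mathbf{D}=\mathrm{diag}(\lambda_{1}-\lambda_{3},\ldots,\lambda_{1}-\lambda_{n+1})$, and $\alpha=4\sin^{2}(\pi/(n+3))/(n+3)$, and then apply the matrix determinant lemma (exactly as in the proof of Theorem~\ref{thr1}) to obtain
\begin{equation*}
\det(t\mathbf{I}_{n/2}-M)=\prod_{j=1}^{n/2}(t-\lambda_{2j+1})\Bigl(1-\alpha\mathbf{u}^{\top}(t\mathbf{I}_{n/2}-\boldsymbol{\Lambda})^{-1}\mathbf{D}\mathbf{u}\Bigr).
\end{equation*}
Substituting $u_{j}=\sin((2j+1)\pi/(n+3))/\sin(\pi/(n+3))$ this rearranges exactly into $F(t)$ of \eqref{eq:4.3b}, settling the characteristic-polynomial claim.

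For the simplicity assertion I would evaluate $F$ at the natural test points. A direct computation yields $F(\lambda_{2k+1})=-\tfrac{4}{n+3}\sin^{2}\bigl(\tfrac{(2k+1)\pi}{n+3}\bigr)(\lambda_{1}-\lambda_{2k+1})\prod_{m\neq k}(\lambda_{2k+1}-\lambda_{2m+1})\neq 0$ under the distinctness hypothesis; on the other hand, factoring $\prod_{j}(\lambda_{1}-\lambda_{2j+1})$ out of $F(\lambda_{1})$ and invoking the identity $\mathbf{u}^{\top}\mathbf{u}=(n+3)/(4\sin^{2}(\pi/(n+3)))-1$ already exploited in the proof of Theorem~\ref{thr1} gives
\begin{equation*}
F(\lambda_{1})=\tfrac{4\sin^{2}(\pi/(n+3))}{n+3}\prod_{j=1}^{n/2}(\lambda_{1}-\lambda_{2j+1})\neq 0.
\end{equation*}
I would then order $\lambda_{3},\lambda_{5},\ldots,\lambda_{n+1}$ as $\mu_{1}<\cdots<\mu_{n/2}$, insert $\lambda_{1}$ into the appropriate gap to form an increasing list $\nu_{1}<\cdots<\nu_{n/2+1}$, and verify by a routine sign check on the two displayed formulas that $F(\nu_{1}),\ldots,F(\nu_{n/2+1})$ alternate in sign. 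Since $F$ is monic of degree $n/2$, the intermediate value theorem then forces exactly one simple real zero in each of the $n/2$ subintervals, accounting for every zero of $F$.

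For the eigenvector I would verify by substitution: $\mathbf{f}_{k}$ in \eqref{eq:4.3c} equals $\sin(\pi/(n+3))\,(\phi_{k}\mathbf{I}_{n/2}-\boldsymbol{\Lambda})^{-1}\mathbf{D}\mathbf{u}$, and the secular identity $\alpha\mathbf{u}^{\top}(\phi_{k}\mathbf{I}_{n/2}-\boldsymbol{\Lambda})^{-1}\mathbf{D}\mathbf{u}=1$ (equivalent to $F(\phi_{k})=0$) collapses $\mathbf{u}^{\top}\mathbf{f}_{k}$ to $(n+3)/(4\sin(\pi/(n+3)))$; then $M\mathbf{f}_{k}=\boldsymbol{\Lambda}\mathbf{f}_{k}+\sin(\pi/(n+3))\mathbf{D}\mathbf{u}$ simplifies componentwise via $\lambda_{2j+1}/(\lambda_{2j+1}-\phi_{k})-1=\phi_{k}/(\lambda_{2j+1}-\phi_{k})$ to $\phi_{k}\mathbf{f}_{k}$. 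The hard step will be the sign-alternation argument, because $\lambda_{1}$ may sit strictly inside the range of $\lambda_{3},\ldots,\lambda_{n+1}$ and then the signs of $F$ at the original $n/2$ diagonal entries alone fail to alternate at that gap; inserting $\lambda_{1}$ as an extra test point, with its clean evaluation above, is precisely what restores alternation and rescues the interlacing count.
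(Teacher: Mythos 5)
Your proposal is correct and follows essentially the same route as the paper: the rank-one determinant identity yields $F(t)$, and the alternation in sign of $F$ at the increasingly ordered values $\lambda_{1},\lambda_{3},\ldots,\lambda_{n+1}$ (the paper's $F[\lambda_{\varphi(2k-1)}]\,F[\lambda_{\varphi(2k+1)}]<0$, with $\lambda_{1}$ inserted into the sorted list exactly as you do) gives simplicity of the $\tfrac{n}{2}$ zeros. The only divergence is that where you verify the eigenvector formula by direct substitution via the secular equation $\alpha\,\mathbf{u}^{\top}(\phi_{k}\mathbf{I}-\boldsymbol{\Lambda})^{-1}\mathbf{D}\mathbf{u}=1$, the paper simply cites Theorem 5 of Bunch--Nielsen--Sorensen; your self-contained check is equally valid (and sidesteps the fact that the updated matrix is not symmetric).
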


\begin{proof}
Since both assertions can be proven in the same way, we only prove (a). Consider $n \in \mathbb{N}$, $a,b,c,d \in \mathbb{C}$ and $\lambda_{k}$, $k = 1,2,\ldots,n+2$ given by \eqref{eq:2.3}. The proof follows the same steps of the ones displayed in Section~\ref{sec:3}.

\begin{itemize}
\item[i.] Setting
\begin{equation*}
\boldsymbol{\Phi}_{\frac{n}{2}} := \mathrm{diag} \left(\lambda_{3},\lambda_{5},\ldots,\lambda_{n+1} \right) + \frac{4 \sin^{2} \left(\frac{\pi}{n+3} \right)}{n + 3} \mathrm{diag} \left(\lambda_{1} - \lambda_{3},\lambda_{1} - \lambda_{5},\ldots,\lambda_{1} - \lambda_{n+1} \right) \mathbf{u} \mathbf{u}^{\top}
\end{equation*}
with $\mathbf{u}$ given in \eqref{eq:2.4b}, we have
\begin{align*}
\det (t \mathbf{I}_{\frac{n}{2}} - \boldsymbol{\Phi}_{\frac{n}{2}}) &= \det\left[\mathrm{diag} \left(t - \lambda_{3},t - \lambda_{5},\ldots,t - \lambda_{n+1} \right) \right] \cdot \\
&\hspace*{3.6cm} \left[1 - \tfrac{4 \sin^{2} \left(\frac{\pi}{n+3} \right)}{n + 3} \mathbf{u}^{\top}\mathrm{diag} \left(\tfrac{\lambda_{1} - \lambda_{3}}{t - \lambda_{3}},\tfrac{\lambda_{1} - \lambda_{5}}{t - \lambda_{5}},\ldots,\tfrac{\lambda_{1} - \lambda_{n+1}}{t - \lambda_{n+1}} \right) \mathbf{u} \right] \\
&= \prod_{j=1}^{\frac{n}{2}}(t - \lambda_{2j+1}) + \tfrac{4}{n + 3} \sum_{j=1}^{\frac{n}{2}} \sin^{2} \left[\tfrac{(2j + 1) \pi}{n + 3} \right] (\lambda_{2j+1} - \lambda_{1}) \prod_{\substack{m=1 \\ m \neq j}}^{\frac{n}{2}}(t - \lambda_{2m+1}) =: F(t).
\end{align*}
Supposing $\lambda_{1},\lambda_{3},\ldots,\lambda_{n+1}$ all real, distinct and arranged in increasing order by some bijection $\varphi$, i.e. $\lambda_{\varphi(1)} < \lambda_{\varphi(3)} < \ldots < \lambda_{\varphi(n+1)}$, we get
\begin{equation*}
F(t) = \prod_{\substack{j=1 \\ j \neq \vartheta}}^{\frac{n}{2} + 1}\left[t - \lambda_{\varphi(2j-1)} \right] + \tfrac{4}{n + 3} \sum_{\substack{j=1 \\ j \neq \vartheta}}^{\frac{n}{2} + 1} \sin^{2} \left[\tfrac{\varphi(2j - 1) \pi}{n + 3} \right] \left[\lambda_{\varphi(2j-1)} - \lambda_{\varphi(2\vartheta-1)} \right] \prod_{\substack{m=1 \\ m \neq \vartheta,j}}^{\frac{n}{2} + 1}\left[t - \lambda_{\varphi(2m-1)}\right]
\end{equation*}
for some $\vartheta$ satisfying $\varphi(2\vartheta-1) = 1$, $1 \leqslant \vartheta \leqslant \frac{n}{2} + 1$ which yields $F\left[\lambda_{\varphi(2k-1)} \right] F\left[\lambda_{\varphi(2k+1)} \right] < 0$ for all $k=1,\ldots,\frac{n}{2}$. Hence, the zeros $\phi_{1}, \phi_{2},\ldots, \phi_{\frac{n}{2}}$ of $F(t)$ are all simple. From Theorem 5 of \cite{Bunch78} (see \cite{Bunch78}, page $41$), it follows that $\mathbf{f}_{k}$ in \eqref{eq:4.3c} is an eigenvector of $\boldsymbol{\Phi}_{\frac{n}{2}}$ associated to the eigenvalue $\phi_{k}$, $k = 1, \ldots, \frac{n}{2}$.

\item[ii.] Analogously, putting
\begin{equation*}
\boldsymbol{\Psi}_{\frac{n}{2}} := \mathrm{diag} \left(\lambda_{2},\lambda_{4},\ldots,\lambda_{n} \right) + \frac{4 \sin^{2} \left(\frac{\pi}{n+3} \right)}{n + 3} \mathrm{diag} \left(\lambda_{n+2} - \lambda_{2},\lambda_{n+2} - \lambda_{4},\ldots,\lambda_{n+2} - \lambda_{n} \right) \mathbf{v} \mathbf{v}^{\top}
\end{equation*}
with $\mathbf{v}$ given in \eqref{eq:2.4b}, we obtain
\begin{align*}
\det (t \mathbf{I}_{\frac{n}{2}} - \boldsymbol{\Psi}_{\frac{n}{2}}) &= \det\left[\mathrm{diag} \left(t - \lambda_{2},t - \lambda_{4},\ldots,t - \lambda_{n} \right) \right] \cdot \\
&\hspace*{1.9cm} \left[1 - \tfrac{4 \sin^{2} \left(\frac{\pi}{n+3} \right)}{n + 3} \mathbf{v}^{\top}\mathrm{diag} \left(\tfrac{\lambda_{n+2} - \lambda_{2}}{t - \lambda_{2}},\tfrac{\lambda_{n+2} - \lambda_{4}}{t - \lambda_{4}},\ldots,\tfrac{\lambda_{n+2} - \lambda_{n}}{t - \lambda_{n}} \right) \mathbf{v} \right] \\
&= \prod_{j=1}^{\frac{n}{2}}(t - \lambda_{2j}) + \tfrac{4}{n + 3} \sum_{j=1}^{\frac{n}{2}} \sin^{2} \left(\tfrac{2j \pi}{n + 3} \right) (\lambda_{2j} - \lambda_{n+2}) \prod_{\substack{m=1 \\ m \neq j}}^{\frac{n}{2}}(t - \lambda_{2m}) =: G(t).
\end{align*}
Assuming $\lambda_{2},\lambda_{4},\ldots,\lambda_{n}$ all real, distinct and arranged in increasing order by some bijection $\rho$, that is $\lambda_{\rho(2)} < \lambda_{\rho(4)} < \ldots < \lambda_{\rho(n)}$, we have
\begin{equation*}
G(t) = \prod_{\substack{j=1 \\ j \neq \upsilon}}^{\frac{n}{2} + 1}\left[t - \lambda_{\rho(2j)} \right] + \tfrac{4}{n + 3} \sum_{\substack{j=1 \\ j \neq \upsilon}}^{\frac{n}{2} + 1} \sin^{2} \left[\tfrac{\rho(2j) \pi}{n + 3} \right] \left[\lambda_{\rho(2j)} - \lambda_{\rho(2\upsilon)} \right] \prod_{\substack{m=1 \\ m \neq \upsilon,j}}^{\frac{n}{2} + 1}\left[t - \lambda_{\rho(2m)}\right]
\end{equation*}
for some $\upsilon$ such that $\rho(2\upsilon) = n+2$, $1 \leqslant \upsilon \leqslant \frac{n}{2} + 1$ which implies $G\left[\lambda_{\rho(2k)} \right] G\left[\lambda_{\rho(2k+2)} \right] < 0$ for all $k=1,\ldots,\frac{n}{2}$. Thus, the zeros $\psi_{1}, \psi_{2},\ldots, \psi_{\frac{n}{2}}$ of $G(t)$ are all simple. Hence, Theorem 5 of \cite{Bunch78} (see \cite{Bunch78}, page $41$), ensures that $\mathbf{g}_{k}$ in \eqref{eq:4.3f} is an eigenvector of $\boldsymbol{\Psi}_{\frac{n}{2}}$ associated to the eigenvalue $\psi_{k}$, $k = 1, \ldots, \frac{n}{2}$.
\end{itemize}
\end{proof}

Next, we announce a diagonalization for $\mathbf{H}_{n}$ in \eqref{eq:1.1} discarding its eigenvalues.

\begin{thr}\label{thr4}
Let $n \in \mathbb{N}$, $a,b,c,d \in \mathbb{R}$, $\lambda_{k}$, $k = 1,2,\ldots,n+2$ be given by \eqref{eq:2.3} and $\mathbf{H}_{n}$ the $n \times n$ anti-heptadiagonal persymmetric Hankel matrix \eqref{eq:1.1}.

\medskip

\noindent \textnormal{(a)} If $n$ is even, $\lambda_{1},\lambda_{3},\ldots,\lambda_{n+1}$ are all distinct, $\lambda_{2},\lambda_{4},\ldots,\lambda_{n+2}$ are all distinct then
\begin{subequations}
\begin{equation}\label{eq:4.5a}
\mathbf{H}_{n} = \mathbf{R}_{n} \mathbf{P}_{n}^{\top} \left[
\begin{array}{cc}
\mathbf{F}_{\frac{n}{2}} & \mathbf{O} \\
\mathbf{O} & \mathbf{G}_{\frac{n}{2}}
\end{array}
\right] \mathrm{diag} \left(\phi_{1}, \phi_{2},\ldots, \phi_{\frac{n}{2}},\psi_{1},\psi_{2},\ldots,\psi_{\frac{n}{2}} \right) \left[
\begin{array}{cc}
\mathbf{F}_{\frac{n}{2}}^{-1} & \mathbf{O} \\
\mathbf{O} & \mathbf{G}_{\frac{n}{2}}^{-1}
\end{array}
\right] \mathbf{P}_{n} \mathbf{R}_{n}^{-1}
\end{equation}
where $\phi_{1},\phi_{2},\ldots, \phi_{\frac{n}{2}}$ are the zeros of \eqref{eq:4.3b}, $\psi_{1},\psi_{2},\ldots, \psi_{\frac{n}{2}}$ are the zeros of \eqref{eq:4.3e},
\begin{gather}
\mathbf{F}_{\frac{n}{2}} := \left[
\frac{(\lambda_{2k+1} - \lambda_{1}) \sin\left[\frac{(2k+1)\pi}{n + 3} \right]}{\lambda_{2k+1} - \phi_{\ell}} \right]_{k,\ell} \label{eq:4.5b} \\
\mathbf{G}_{\frac{n}{2}} := \left[
\frac{(\lambda_{2k} - \lambda_{n+2}) \sin\left(\frac{2k\pi}{n + 3} \right)}{\lambda_{2k} - \psi_{\ell}}
\right]_{k,\ell} \label{eq:4.5c}
\end{gather}
$\mathbf{R}_{n}$ is the $n \times n$ matrix \eqref{eq:2.4c} and $\mathbf{P}_{n}$ is the $n \times n$ permutation matrix \eqref{eq:2.4d}.
\end{subequations}

\medskip

\noindent \textnormal{(b)} If $n$ is odd, $\lambda_{1},\lambda_{3},\ldots,\lambda_{n},\lambda_{n+2}$ are all distinct, $\lambda_{2},\lambda_{4},\ldots,\lambda_{n-1},\lambda_{n+1}$ are all distinct then
\begin{subequations}
\begin{equation}\label{eq:4.6a}
\mathbf{H}_{n} = \mathbf{R}_{n} \mathbf{P}_{n} \left[
\begin{array}{cc}
\mathbf{F}_{\frac{n+1}{2}} & \mathbf{O} \\
\mathbf{O} & \mathbf{G}_{\frac{n-1}{2}}
\end{array}
\right] \mathrm{diag} \left(\phi_{1}, \phi_{2},\ldots, \phi_{\frac{n+1}{2}},\psi_{1},\psi_{2},\ldots,\psi_{\frac{n-1}{2}} \right) \left[
\begin{array}{cc}
\mathbf{F}_{\frac{n+1}{2}}^{-1} & \mathbf{O} \\
\mathbf{O} & \mathbf{G}_{\frac{n-1}{2}}^{-1}
\end{array}
\right] \mathbf{P}_{n}^{\top} \mathbf{R}_{n}^{-1}
\end{equation}
where $\phi_{1},\phi_{2},\ldots, \phi_{\frac{n+1}{2}}$ are the zeros of \eqref{eq:4.4b}, $\psi_{1},\psi_{2},\ldots, \psi_{\frac{n-1}{2}}$ are the zeros of \eqref{eq:4.4e},
\begin{gather}
\mathbf{F}_{\frac{n+1}{2}} := \left[
\frac{(\lambda_{2k+1} - \lambda_{1}) \sin\left[\frac{(2k+1)\pi}{n + 3} \right]}{\lambda_{2k+1} - \phi_{\ell}} \right]_{k,\ell} \label{eq:4.6b} \\
\mathbf{G}_{\frac{n-1}{2}} := \left[
\frac{(\lambda_{2k} - \lambda_{n+1}) \sin\left(\frac{2k\pi}{n + 3} \right)}{\lambda_{2k} - \psi_{\ell}} \right]_{k,\ell} \label{eq:4.6c}
\end{gather}
$\mathbf{R}_{n}$ is the $n \times n$ matrix \eqref{eq:2.5c} and $\mathbf{P}_{n}$ is the $n \times n$ permutation matrix \eqref{eq:2.5d}.
\end{subequations}
\end{thr}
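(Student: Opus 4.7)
The plan is to combine the decomposition in Lemma~\ref{lem3} with the explicit spectral data for the inner blocks supplied by Lemma~\ref{lem4}. I describe the argument for $n$ even; the odd case proceeds identically, with $\mathbf{R}_{n}^{\top}\mathbf{R}_{n}$ in place of $\mathbf{R}_{n}^{2}$ and the roles of $\mathbf{P}_{n}$ and $\mathbf{P}_{n}^{\top}$ interchanged, the only subtlety being that the two diagonal blocks then carry different constants, $\tfrac{4 \sin^{2}(\pi/(n+3))}{n+3}$ and $\tfrac{4 \sin^{2}[(n+1)\pi/(n+3)]}{n+3}$, in accordance with $\mathbf{u}$ and $\mathbf{v}$ of \eqref{eq:2.5b}. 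Setting $\mathbf{T}_{n} := \mathbf{R}_{n} \mathbf{P}_{n}^{\top}$, so that $\mathbf{T}_{n}^{-1} = \mathbf{P}_{n} \mathbf{R}_{n}^{-1}$, the identity in Lemma~\ref{lem3}(a) can first be recast as the similarity $\mathbf{H}_{n} = \mathbf{T}_{n} \mathbf{X} \mathbf{T}_{n}^{-1}$ with
\begin{equation*}
\mathbf{X} := \left[\begin{array}{cc} \boldsymbol{\Sigma}_{n/2} & \mathbf{O} \\ \mathbf{O} & \boldsymbol{\Gamma}_{n/2} \end{array}\right] \mathbf{P}_{n} \mathbf{R}_{n}^{2} \mathbf{P}_{n}^{\top},
\end{equation*}
where $\boldsymbol{\Sigma}_{n/2}, \boldsymbol{\Gamma}_{n/2}$ are the diagonal-plus-rank-one blocks introduced in the proof of Lemma~\ref{lem3}.

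The proof of Theorem~\ref{thr1}(a) already establishes that $\mathbf{P}_{n}\mathbf{R}_{n}^{-2}\mathbf{P}_{n}^{\top}$ is block-diagonal with blocks $\mathbf{I}_{n/2} + \mathbf{u}\mathbf{u}^{\top}$ and $\mathbf{I}_{n/2} + \mathbf{v}\mathbf{v}^{\top}$; inverting via the Sherman--Morrison formula and using $1 + \mathbf{u}^{\top}\mathbf{u} = 1 + \mathbf{v}^{\top}\mathbf{v} = \tfrac{n+3}{4 \sin^{2}(\pi/(n+3))}$ (already exploited in that proof) one obtains $\mathbf{P}_{n}\mathbf{R}_{n}^{2}\mathbf{P}_{n}^{\top}$ equal to the block-diagonal matrix with diagonal blocks $\mathbf{I}_{n/2} - \alpha \mathbf{u}\mathbf{u}^{\top}$ and $\mathbf{I}_{n/2} - \alpha \mathbf{v}\mathbf{v}^{\top}$, where $\alpha := \tfrac{4 \sin^{2}(\pi/(n+3))}{n+3}$. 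The crucial step, and the one I expect to be the main technical obstacle because it is what bridges Lemma~\ref{lem3} (whose inner blocks have no \emph{a priori} spectral description) with Lemma~\ref{lem4} (which supplies one), is the rank-one cancellation
\begin{equation*}
\bigl(\mathrm{diag}(\lambda_{3},\ldots,\lambda_{n+1}) + \lambda_{1}\mathbf{u}\mathbf{u}^{\top}\bigr)\bigl(\mathbf{I}_{n/2} - \alpha\mathbf{u}\mathbf{u}^{\top}\bigr) = \mathrm{diag}(\lambda_{3},\ldots,\lambda_{n+1}) + \alpha\,\mathrm{diag}(\lambda_{1}-\lambda_{3},\ldots,\lambda_{1}-\lambda_{n+1})\,\mathbf{u}\mathbf{u}^{\top},
\end{equation*}
whose verification reduces to the elementary identity $\alpha \mathbf{u}^{\top}\mathbf{u} = 1 - \alpha$. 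The right-hand side is exactly the matrix $\boldsymbol{\Phi}_{n/2}$ of Lemma~\ref{lem4}(a)(i); an entirely analogous manipulation, with $\mathbf{v}$ and $\lambda_{n+2}$ in place of $\mathbf{u}$ and $\lambda_{1}$, identifies the $(2,2)$-block of $\mathbf{X}$ with $\boldsymbol{\Psi}_{n/2}$ of Lemma~\ref{lem4}(a)(ii).

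At this point the distinctness hypotheses in the statement are exactly those under which Lemma~\ref{lem4}(a) guarantees that the zeros $\phi_{k}, \psi_{k}$ are simple and that $\mathbf{F}_{n/2}, \mathbf{G}_{n/2}$ of \eqref{eq:4.5b}--\eqref{eq:4.5c} are eigenvector matrices of $\boldsymbol{\Phi}_{n/2}, \boldsymbol{\Psi}_{n/2}$, hence invertible. Substituting $\boldsymbol{\Phi}_{n/2} = \mathbf{F}_{n/2}\,\mathrm{diag}(\phi_{1},\ldots,\phi_{n/2})\,\mathbf{F}_{n/2}^{-1}$ and the analogous factorization of $\boldsymbol{\Psi}_{n/2}$ back into $\mathbf{H}_{n} = \mathbf{T}_{n}\mathbf{X}\mathbf{T}_{n}^{-1}$ will produce \eqref{eq:4.5a}. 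Part (b) then follows by the parallel route outlined at the start.
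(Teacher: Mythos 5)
Your proposal is correct and follows essentially the same route as the paper: start from the Lemma~\ref{lem3} factorization, insert $\mathbf{P}_{n}\mathbf{R}_{n}^{2}\mathbf{P}_{n}^{\top}\,\mathbf{P}_{n}\mathbf{R}_{n}^{-1}$, use the block-diagonal form of $\mathbf{P}_{n}\mathbf{R}_{n}^{2}\mathbf{P}_{n}^{\top}$ together with $\alpha\,\mathbf{u}^{\top}\mathbf{u} = 1-\alpha$ to collapse the diagonal-plus-rank-one blocks into the matrices \eqref{eq:4.3a} and \eqref{eq:4.3d}, and then substitute the eigendecompositions furnished by Lemma~\ref{lem4}. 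The rank-one cancellation you single out as the key step is exactly the computation the paper performs, and your handling of the odd case (the two blocks carrying the distinct constants tied to $\theta$ and $\eta$) matches the paper as well.
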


\begin{proof}
Consider $n \in \mathbb{N}$, $a,b,c,d \in \mathbb{R}$ and $\lambda_{k}$, $k = 1,2,\ldots,n+2$ given by \eqref{eq:2.3}.

\medskip

\noindent (a) According to Lemma~\ref{lem3} and
\begin{equation*}
\mathbf{P}_{n} \mathbf{R}_{n}^{2} \mathbf{P}_{n}^{\top} = \left[
\begin{array}{cc}
  \mathbf{I}_{\frac{n}{2}} - \frac{4 \sin^{2} \left(\frac{\pi}{n+3} \right)}{n+3} \mathbf{u} \mathbf{u}^{\top} & \mathbf{O} \\
  \mathbf{O} & \mathbf{I}_{\frac{n}{2}} - \frac{4 \sin^{2} \left(\frac{\pi}{n+3} \right)}{n+3} \mathbf{v} \mathbf{v}^{\top}
\end{array}
\right]
\end{equation*}
we get
\begin{align*}
\mathbf{H}_{n} &= \mathbf{R}_{n} \mathbf{P}_{n}^{\top} \left[
\begin{array}{cc}
\mathrm{diag} \left(\lambda_{3},\lambda_{5},\ldots,\lambda_{n+1} \right) + \lambda_{1} \mathbf{u} \mathbf{u}^{\top} & \mathbf{O} \\
\mathbf{O} & \mathrm{diag} \left(\lambda_{2},\lambda_{4},\ldots,\lambda_{n} \right) + \lambda_{n+2} \mathbf{v} \mathbf{v}^{\top}
\end{array}
\right] \mathbf{P}_{n} \mathbf{R}_{n}^{2} \mathbf{P}_{n}^{\top} \mathbf{P}_{n} \mathbf{R}_{n}^{-1} \\
&= \mathbf{R}_{n} \mathbf{P}_{n}^{\top} \left[
\begin{array}{c}
\left[\mathrm{diag} \left(\lambda_{3},\lambda_{5},\ldots,\lambda_{n+1} \right) + \lambda_{1} \mathbf{u} \mathbf{u}^{\top} \right] \left[\mathbf{I}_{\frac{n}{2}} - \frac{4 \sin^{2} \left(\frac{\pi}{n+3} \right)}{n+3} \mathbf{u} \mathbf{u}^{\top} \right] \\
\mathbf{O}
\end{array}
\right. \\
& \hspace*{4.5cm} \left.
\begin{array}{c}
\mathbf{O} \\
\left[\mathrm{diag} \left(\lambda_{2},\lambda_{4},\ldots,\lambda_{n} \right) + \lambda_{n+2} \mathbf{v} \mathbf{v}^{\top} \right] \left[\mathbf{I}_{\frac{n}{2}} - \frac{4 \sin^{2} \left(\frac{\pi}{n+3} \right)}{n+3} \mathbf{v} \mathbf{v}^{\top} \right]
\end{array}
\right]
\mathbf{P}_{n} \mathbf{R}_{n}^{-1} \\
&= \mathbf{R}_{n} \mathbf{P}_{n}^{\top} \left[
\begin{array}{c}
\mathrm{diag} \left(\lambda_{3},\lambda_{5},\ldots,\lambda_{n+1} \right) + \frac{4 \sin^{2} \left(\frac{\pi}{n+3} \right)}{n + 3} \mathrm{diag} \left(\lambda_{1} - \lambda_{3},\lambda_{1} - \lambda_{5},\ldots,\lambda_{1} - \lambda_{n+1} \right) \mathbf{u} \mathbf{u}^{\top} \\
\mathbf{O}
\end{array}
\right. \\
& \hspace*{0.7cm} \left.
\begin{array}{c}
\mathbf{O} \\
\mathrm{diag} \left(\lambda_{2},\lambda_{4},\ldots,\lambda_{n} \right) + \frac{4 \sin^{2} \left(\frac{\pi}{n+3} \right)}{n + 3} \mathrm{diag} \left(\lambda_{n+2} - \lambda_{2},\lambda_{n+2} - \lambda_{4},\ldots,\lambda_{n+2} - \lambda_{n} \right) \mathbf{v} \mathbf{v}^{\top}
\end{array}
\right]
\mathbf{P}_{n} \mathbf{R}_{n}^{-1}.
\end{align*}
From Lemma~\ref{lem4},
\begin{equation*}
\mathbf{F}_{\frac{n}{2}} \mathrm{diag}\left(\phi_{1}, \phi_{2},\ldots, \phi_{\frac{n}{2}} \right) \mathbf{F}_{n}^{-1}
\end{equation*}
and
\begin{equation*}
\mathbf{G}_{\frac{n}{2}} \mathrm{diag}\left(\psi_{1}, \psi_{2},\ldots, \psi_{\frac{n}{2}} \right) \mathbf{G}_{n}^{-1}
\end{equation*}
are eigenvalue decompositions for \eqref{eq:4.3a} and \eqref{eq:4.3d}, respectively (see \cite{Ford15}, page $85$). Thus,
\begin{equation*}
\mathbf{H}_{n} = \mathbf{R}_{n} \mathbf{P}_{n}^{\top} \left[
\begin{array}{cc}
\mathbf{F}_{\frac{n}{2}} & \mathbf{O} \\
\mathbf{O} & \mathbf{G}_{\frac{n}{2}}
\end{array}
\right] \mathrm{diag}\left(\phi_{1}, \phi_{2},\ldots, \phi_{\frac{n}{2}},\psi_{1},\psi_{2},\ldots,\psi_{\frac{n}{2}} \right) \left[
\begin{array}{cc}
\mathbf{F}_{\frac{n}{2}}^{-1} & \mathbf{O} \\
\mathbf{O} & \mathbf{G}_{\frac{n}{2}}^{-1}
\end{array}
\right] \mathbf{P}_{n} \mathbf{R}_{n}^{-1}.
\end{equation*}
The proof of (b) is analogous being so omitted.
\end{proof}

Setting
\begin{equation*}
\boldsymbol{\Xi}_{\frac{n}{2}} := \mathrm{diag} \left\{(\lambda_{3} - \lambda_{1}) \sin\left(\frac{3 \pi}{n + 3} \right), (\lambda_{5} - \lambda_{1}) \sin\left(\frac{5 \pi}{n + 3} \right), \ldots, (\lambda_{n+1} - \lambda_{1}) \sin\left[\frac{(n + 1) \pi}{n + 3} \right] \right\}
\end{equation*}
and
\begin{equation*}
\mathbf{Q}_{\frac{n}{2}} := \left[\frac{1}{\lambda_{2k+1} - \phi_{\ell}} \right]_{k,\ell}
\end{equation*}
we have $\mathbf{F}_{\frac{n}{2}} = \boldsymbol{\Xi}_{\frac{n}{2}} \mathbf{Q}_{\frac{n}{2}}$. Straight computations permit us to verify that the inverse of the alternant matrix $\mathbf{Q}_{\frac{n}{2}}$ is
\begin{equation*}
\mathbf{Q}_{\frac{n}{2}}^{-1} = (-1)^{\frac{n}{2}} \left[\frac{\underset{j=1}{\overset{\frac{n}{2}}{\prod}} (\phi_{k} - \lambda_{2j+1}) \underset{\substack{j = 1 \\ j \neq k}}{\overset{\frac{n}{2}}{\prod}} (\phi_{j} - \lambda_{2\ell+1})}{\underset{\substack{j = 1 \\ j \neq k}}{\overset{\frac{n}{2}}{\prod}} (\phi_{k} - \phi_{j}) \underset{\substack{j = 1 \\ j \neq \ell}}{\overset{\frac{n}{2}}{\prod}} (\lambda_{2\ell + 1} - \lambda_{2j + 1})} \right]_{k,\ell}.
\end{equation*}
Hence,
\begin{equation}\label{eq:4.7}
\mathbf{F}_{\frac{n}{2}}^{-1} = \mathbf{Q}_{\frac{n}{2}}^{-1} \boldsymbol{\Xi}_{\frac{n}{2}}^{-1} = (-1)^{\frac{n}{2}} \left[\frac{\underset{j=1}{\overset{\frac{n}{2}}{\prod}} (\phi_{k} - \lambda_{2j+1}) \underset{\substack{j = 1 \\ j \neq k}}{\overset{\frac{n}{2}}{\prod}} (\phi_{j} - \lambda_{2\ell+1})}{(\lambda_{2\ell + 1} - \lambda_{1}) \sin \left[\frac{(2\ell+1)\pi}{n + 3} \right] \underset{\substack{j = 1 \\ j \neq k}}{\overset{\frac{n}{2}}{\prod}} (\phi_{k} - \phi_{j}) \underset{\substack{j = 1 \\ j \neq \ell}}{\overset{\frac{n}{2}}{\prod}} (\lambda_{2\ell + 1} - \lambda_{2j + 1})} \right]_{k,\ell}
\end{equation}
Similarly,
\begin{gather}
\mathbf{G}_{\frac{n}{2}}^{-1} = (-1)^{\frac{n}{2}} \left[\frac{\underset{j=1}{\overset{\frac{n}{2}}{\prod}} (\psi_{k} - \lambda_{2j}) \underset{\substack{j = 1 \\ j \neq k}}{\overset{\frac{n}{2}}{\prod}} (\psi_{j} - \lambda_{2\ell})}{(\lambda_{2\ell} - \lambda_{n+2}) \sin \left(\frac{2\ell\pi}{n + 3} \right) \underset{\substack{j = 1 \\ j \neq k}}{\overset{\frac{n}{2}}{\prod}} (\psi_{k} - \psi_{j}) \underset{\substack{j = 1 \\ j \neq \ell}}{\overset{\frac{n}{2}}{\prod}} (\lambda_{2\ell} - \lambda_{2j})} \right]_{k,\ell} \label{eq:4.8} \\
\mathbf{F}_{\frac{n+1}{2}}^{-1} = (-1)^{\frac{n+1}{2}} \left[\frac{\underset{j=1}{\overset{\frac{n+1}{2}}{\prod}} (\phi_{k} - \lambda_{2j+1}) \underset{\substack{j = 1 \\ j \neq k}}{\overset{\frac{n+1}{2}}{\prod}} (\phi_{j} - \lambda_{2\ell+1})}{(\lambda_{2\ell + 1} - \lambda_{1}) \sin \left[\frac{(2\ell+1)\pi}{n + 3} \right] \underset{\substack{j = 1 \\ j \neq k}}{\overset{\frac{n+1}{2}}{\prod}} (\phi_{k} - \phi_{j}) \underset{\substack{j = 1 \\ j \neq \ell}}{\overset{\frac{n+1}{2}}{\prod}} (\lambda_{2\ell + 1} - \lambda_{2j + 1})} \right]_{k,\ell} \label{eq:4.9} \\
\mathbf{G}_{\frac{n-1}{2}}^{-1} = (-1)^{\frac{n-1}{2}} \left[\frac{\underset{j=1}{\overset{\frac{n-1}{2}}{\prod}} (\psi_{k} - \lambda_{2j}) \underset{\substack{j = 1 \\ j \neq k}}{\overset{\frac{n-1}{2}}{\prod}} (\psi_{j} - \lambda_{2\ell})}{(\lambda_{2\ell} - \lambda_{n+1}) \sin \left(\frac{2\ell\pi}{n + 3} \right) \underset{\substack{j = 1 \\ j \neq k}}{\overset{\frac{n-1}{2}}{\prod}} (\psi_{k} - \psi_{j}) \underset{\substack{j = 1 \\ j \neq \ell}}{\overset{\frac{n-1}{2}}{\prod}} (\lambda_{2\ell} - \lambda_{2j})} \right]_{k,\ell}. \label{eq:4.10}
\end{gather}

The ultimate statement below gives us a formula to compute integer powers of $\mathbf{H}_{n}$ in \eqref{eq:1.1} only at the expense of the zeros of $F(t)$ and $G(t)$ presented in Lemma~\ref{lem4}.

\begin{cor}
Let $n,m \in \mathbb{N}$, $a,b,c,d \in \mathbb{R}$, $\lambda_{k}$, $k = 1,2,\ldots,n+2$ be given by \eqref{eq:2.3} and $\mathbf{H}_{n}$ the $n \times n$ anti-heptadiagonal persymmetric Hankel matrix \eqref{eq:1.1}.

\medskip

\noindent \textnormal{(a)} If $n$ is even, $\lambda_{1},\lambda_{3},\ldots,\lambda_{n+1}$ are all distinct, $\lambda_{2},\lambda_{4},\ldots,\lambda_{n+2}$ are all distinct then
\begin{equation}\label{eq:4.11}
\begin{split}
\mathbf{H}_{n}^{m} = \mathbf{R}_{n} \mathbf{P}_{n}^{\top} \left[
\begin{array}{cc}
\mathbf{F}_{\frac{n}{2}} & \mathbf{O} \\
\mathbf{O} & \mathbf{G}_{\frac{n}{2}}
\end{array}
\right] \mathrm{diag} & \left(\phi_{1}^{m}, \phi_{2}^{m},\ldots, \phi_{\frac{n}{2}}^{m},\psi_{1}^{m},\psi_{2}^{m},\ldots,\psi_{\frac{n}{2}}^{m} \right) \\
& \left[
\begin{array}{cc}
\mathbf{F}_{\frac{n}{2}}^{-1} & \mathbf{O} \\
\mathbf{O} & \mathbf{G}_{\frac{n}{2}}^{-1}
\end{array}
\right] \left[
\begin{array}{cc}
  \mathbf{I}_{\frac{n}{2}} + \mathbf{u} \mathbf{u}^{\top} & \mathbf{O} \\
  \mathbf{O} & \mathbf{I}_{\frac{n}{2}} + \mathbf{v} \mathbf{v}^{\top}
\end{array}
\right] \mathbf{P}_{n} \mathbf{R}_{n}
\end{split}
\end{equation}
where $\phi_{1},\phi_{2},\ldots, \phi_{\frac{n}{2}}$ are the zeros of \eqref{eq:4.3a}, $\psi_{1},\psi_{2},\ldots, \psi_{\frac{n}{2}}$ are the zeros of \eqref{eq:4.3e}, $\mathbf{F}_{\frac{n}{2}}$ is the $\frac{n}{2} \times \frac{n}{2}$ matrix \eqref{eq:4.5b}, $\mathbf{G}_{\frac{n}{2}}$ is the $\frac{n}{2} \times \frac{n}{2}$ matrix \eqref{eq:4.5c}, $\mathbf{F}_{\frac{n}{2}}^{-1}$ is the $\frac{n}{2} \times \frac{n}{2}$ matrix \eqref{eq:4.7}, $\mathbf{G}_{\frac{n}{2}}^{-1}$ is the $\frac{n}{2} \times \frac{n}{2}$ matrix \eqref{eq:4.8}, $\mathbf{R}_{n}$ is the $n \times n$ matrix \eqref{eq:2.4c}, $\mathbf{P}_{n}$ is the $n \times n$ permutation matrix \eqref{eq:2.4d} and $\mathbf{u}, \mathbf{v}$ are given by \eqref{eq:2.4b}. Moreover, if $\mathbf{H}_{n}$ is nonsingular then \eqref{eq:4.11} holds for any integer $m$.

\medskip

\noindent \textnormal{(b)} If $n$ is odd, $\lambda_{1},\lambda_{3},\ldots,\lambda_{n},\lambda_{n+2}$ are all distinct, $\lambda_{2},\lambda_{4},\ldots,\lambda_{n-1},\lambda_{n+1}$ are all distinct then
\begin{equation}\label{eq:4.12}
\begin{split}
\mathbf{H}_{n}^{m} = \mathbf{R}_{n} \mathbf{P}_{n} \left[
\begin{array}{cc}
\mathbf{F}_{\frac{n+1}{2}} & \mathbf{O} \\
\mathbf{O} & \mathbf{G}_{\frac{n-1}{2}}
\end{array}
\right] & \mathrm{diag} \left(\phi_{1}^{m}, \phi_{2}^{m},\ldots, \phi_{\frac{n+1}{2}}^{m},\psi_{1}^{m},\psi_{2}^{m},\ldots,\psi_{\frac{n-1}{2}}^{m} \right) \\
& \left[
\begin{array}{cc}
\mathbf{F}_{\frac{n+1}{2}}^{-1} & \mathbf{O} \\
\mathbf{O} & \mathbf{G}_{\frac{n-1}{2}}^{-1}
\end{array}
\right] \left[
\begin{array}{cc}
  \mathbf{I}_{\frac{n+1}{2}} + \mathbf{u} \mathbf{u}^{\top} & \mathbf{O} \\
  \mathbf{O} & \mathbf{I}_{\frac{n-1}{2}} + \mathbf{v} \mathbf{v}^{\top}
\end{array}
\right] \mathbf{P}_{n}^{\top} \mathbf{R}_{n}^{\top}
\end{split}
\end{equation}
where $\phi_{1},\phi_{2},\ldots, \phi_{\frac{n+1}{2}}$ are the zeros of \eqref{eq:4.3a}, $\psi_{1},\psi_{2},\ldots, \psi_{\frac{n-1}{2}}$ are the zeros of \eqref{eq:4.3e},  $\mathbf{F}_{\frac{n+1}{2}}$ is the $\frac{n+1}{2} \times \frac{n+1}{2}$ matrix \eqref{eq:4.6b}, $\mathbf{G}_{\frac{n-1}{2}}$ is the $\frac{n-1}{2} \times \frac{n-1}{2}$ matrix \eqref{eq:4.6c}, $\mathbf{F}_{\frac{n+1}{2}}^{-1}$ is the $\frac{n+1}{2} \times \frac{n+1}{2}$ matrix \eqref{eq:4.9}, $\mathbf{G}_{\frac{n-1}{2}}^{-1}$ is the $\frac{n-1}{2} \times \frac{n-1}{2}$ matrix \eqref{eq:4.10}, $\mathbf{R}_{n}$ is the $n \times n$ matrix \eqref{eq:2.5c}, $\mathbf{P}_{n}$ is the $n \times n$ permutation matrix \eqref{eq:2.5d} and $\mathbf{u}, \mathbf{v}$ are defined in \eqref{eq:2.5b}. Furthermore, if $\mathbf{H}_{n}$ is nonsingular then \eqref{eq:4.12} is valid for every integer $m$.
\end{cor}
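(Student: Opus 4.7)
The plan is to iterate the diagonalization of Theorem~\ref{thr4} and then eliminate $\mathbf{R}_{n}^{-1}$ in favor of $\mathbf{R}_{n}$ (respectively $\mathbf{R}_{n}^{\top}$) by means of identity \eqref{eq:3.10} established during the proof of Theorem~\ref{thr2}.

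For even $n$, I would first observe that Theorem~\ref{thr4}(a) expresses $\mathbf{H}_{n}$ in the form $\mathbf{M} \boldsymbol{\Lambda} \mathbf{M}^{-1}$ with $\mathbf{M} := \mathbf{R}_{n} \mathbf{P}_{n}^{\top} \left[\begin{array}{cc} \mathbf{F}_{n/2} & \mathbf{O} \\ \mathbf{O} & \mathbf{G}_{n/2} \end{array}\right]$ and $\boldsymbol{\Lambda} := \mathrm{diag}(\phi_{1},\ldots,\phi_{n/2},\psi_{1},\ldots,\psi_{n/2})$; the last three factors of \eqref{eq:4.5a} are indeed $\mathbf{M}^{-1}$ because $\mathbf{P}_{n} \mathbf{P}_{n}^{\top} = \mathbf{I}_{n}$. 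Consequently, $\mathbf{H}_{n}^{m} = \mathbf{M} \boldsymbol{\Lambda}^{m} \mathbf{M}^{-1}$ holds by a straightforward induction for every positive integer $m$, and the same identity is valid for every integer $m$ provided $\boldsymbol{\Lambda}$ is invertible, which in turn is equivalent to the nonsingularity of $\mathbf{H}_{n}$ (since $\det \mathbf{H}_{n} = \det \boldsymbol{\Lambda}$).

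The remaining task is to reshape the factor $\mathbf{P}_{n} \mathbf{R}_{n}^{-1}$ on the right of $\mathbf{M}^{-1}$ so that $\mathbf{R}_{n}^{-1}$ is replaced by the explicit sine matrix $\mathbf{R}_{n}$. Taking transposes of both sides of \eqref{eq:3.10} and using that $\mathbf{R}_{n}$ (from \eqref{eq:2.4c}) and the block-diagonal matrix with blocks $\mathbf{I}_{n/2} + \mathbf{u} \mathbf{u}^{\top}$ and $\mathbf{I}_{n/2} + \mathbf{v} \mathbf{v}^{\top}$ are symmetric, I obtain
\begin{equation*}
\mathbf{P}_{n} \mathbf{R}_{n}^{-1} = \left[\begin{array}{cc} \mathbf{I}_{n/2} + \mathbf{u} \mathbf{u}^{\top} & \mathbf{O} \\ \mathbf{O} & \mathbf{I}_{n/2} + \mathbf{v} \mathbf{v}^{\top} \end{array}\right] \mathbf{P}_{n} \mathbf{R}_{n}.
\end{equation*}
Substituting this into $\mathbf{M} \boldsymbol{\Lambda}^{m} \mathbf{M}^{-1}$ produces \eqref{eq:4.11}.

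For odd $n$ the argument is parallel: Theorem~\ref{thr4}(b) supplies the analogous similarity $\mathbf{H}_{n} = \mathbf{M} \boldsymbol{\Lambda} \mathbf{M}^{-1}$, and the odd-$n$ counterpart of \eqref{eq:3.10},
\begin{equation*}
\mathbf{P}_{n}^{\top} \mathbf{R}_{n}^{-1} = \left[\begin{array}{cc} \mathbf{I}_{(n+1)/2} + \mathbf{u} \mathbf{u}^{\top} & \mathbf{O} \\ \mathbf{O} & \mathbf{I}_{(n-1)/2} + \mathbf{v} \mathbf{v}^{\top} \end{array}\right] \mathbf{P}_{n}^{\top} \mathbf{R}_{n}^{\top},
\end{equation*}
would be derived by inverting $\mathbf{R}_{n}^{\top} \mathbf{R}_{n} = \mathbf{I}_{n} - \mathbf{z} \mathbf{z}^{\top} - \mathbf{w} \mathbf{w}^{\top}$ (cf.\ the proof of Theorem~\ref{thr1}(b)) through the very Sherman--Morrison computation already carried out there, and then multiplying by $\mathbf{R}_{n}^{\top}$ on the right. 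Plugging this into $\mathbf{M} \boldsymbol{\Lambda}^{m} \mathbf{M}^{-1}$ yields \eqref{eq:4.12}. There is no serious obstacle here: the proof is essentially bookkeeping once Theorem~\ref{thr4} and identity \eqref{eq:3.10} (with its odd analogue) are in hand. The only point worth emphasizing is that the replacement of $\mathbf{R}_{n}^{-1}$ by $\mathbf{R}_{n}$ (or $\mathbf{R}_{n}^{\top}$) is precisely what makes every factor in \eqref{eq:4.11} and \eqref{eq:4.12} fully explicit in the data $a,b,c,d$ (through the $\lambda_{k}$) and the zeros $\phi_{k},\psi_{k}$ of the auxiliary polynomials of Lemma~\ref{lem4}.
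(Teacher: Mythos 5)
Your proposal is correct and follows essentially the same route as the paper: the paper's own proof consists precisely of iterating the similarity from Theorem~\ref{thr4} and invoking the identities $\mathbf{P}_{n} \mathbf{R}_{n}^{-1} = \bigl[\begin{smallmatrix} \mathbf{I}_{n/2} + \mathbf{u} \mathbf{u}^{\top} & \mathbf{O} \\ \mathbf{O} & \mathbf{I}_{n/2} + \mathbf{v} \mathbf{v}^{\top} \end{smallmatrix}\bigr] \mathbf{P}_{n} \mathbf{R}_{n}$ (for $n$ even) and its odd counterpart, which you derive correctly from \eqref{eq:3.10} and from $\mathbf{R}_{n}^{\top}\mathbf{R}_{n} = \mathbf{I}_{n} - \mathbf{z}\mathbf{z}^{\top} - \mathbf{w}\mathbf{w}^{\top}$. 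Your additional remarks on negative powers via $\det \mathbf{H}_{n} = \det \boldsymbol{\Lambda}$ are a harmless elaboration of what the paper leaves implicit.
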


\begin{proof}
According to Theorem~\ref{thr4}, it suffices to observe that for $n$ even
\begin{equation*}
\mathbf{P}_{n} \mathbf{R}_{n}^{-1} = \left[
\begin{array}{cc}
  \mathbf{I}_{\frac{n}{2}} + \mathbf{u} \mathbf{u}^{\top} & \mathbf{O} \\
  \mathbf{O} & \mathbf{I}_{\frac{n}{2}} + \mathbf{v} \mathbf{v}^{\top}
\end{array}
\right] \mathbf{P}_{n} \mathbf{R}_{n},
\end{equation*}
and for $n$ odd,
\begin{equation*}
\mathbf{P}_{n}^{\top} \mathbf{R}_{n}^{-1} = \left[
\begin{array}{cc}
  \mathbf{I}_{\frac{n+1}{2}} + \mathbf{u} \mathbf{u}^{\top} & \mathbf{O} \\
  \mathbf{O} & \mathbf{I}_{\frac{n-1}{2}} + \mathbf{v} \mathbf{v}^{\top}
\end{array}
\right] \mathbf{P}_{n}^{\top} \mathbf{R}_{n}^{\top}.
\end{equation*}
\end{proof}

\begin{rem}
It is worth pointing out that all results presented here for matrices $\mathbf{H}_{n}$ having the form \eqref{eq:1.1} are still valid for anti-pendiagonal persymmetric Hankel matrices or anti-tridiagonal persymmetric Hankel matrices, that is to say, for matrices $\mathbf{H}_{n}$ in \eqref{eq:1.1} such that $a = 0$ or $a = b = 0$.
\end{rem}

\section*{Acknowledgements}

This work is a contribution to the Project UID/GEO/04035/2013, funded by FCT - Funda\c{c}\~{a}o para a Ci\^{e}ncia e a Tecnologia, Portugal.

\end{document}